\documentclass[11pt]{amsart}

\usepackage{amsfonts,amssymb,amsmath,amsthm,latexsym,graphics,epsfig}
\usepackage{verbatim,enumerate,array,booktabs,color,bigstrut,prettyref,tikz-cd}
\usepackage{multirow}
\usepackage[all]{xy}
\usepackage[backref]{hyperref}
\usepackage[OT2,T1]{fontenc}
\usepackage{ctable}

\usepackage{mathtools}

\newrefformat{eq}{\textup{(\ref{#1})}}
\newrefformat{prty}{\textup{(\ref{#1})}}

\definecolor{mylinkcolor}{rgb}{0.8,0,0}
\definecolor{myurlcolor}{rgb}{0,0,0.8}
\definecolor{mycitecolor}{rgb}{0,0,0.8}
\hypersetup{colorlinks=true,urlcolor=myurlcolor,citecolor=mycitecolor,linkcolor=mylinkcolor,linktoc=page,breaklinks=true}

\DeclareSymbolFont{cyrletters}{OT2}{wncyr}{m}{n}
\DeclareMathSymbol{\Sha}{\mathalpha}{cyrletters}{"58}

\addtolength{\textwidth}{4cm} \addtolength{\hoffset}{-2cm}
\addtolength{\marginparwidth}{-2cm}

\newtheorem{defn}{Definition}[section]
\newtheorem{definition}[defn]{Definition}

\newtheorem{corollary}[defn]{Corollary}
\newtheorem{lemma}[defn]{Lemma}

\newtheorem{thm}[defn]{Theorem}
\newtheorem{theorem}[defn]{Theorem}

\newtheorem{proposition}[defn]{Proposition}

\theoremstyle{definition}

\newtheorem*{ack}{Acknowledgements}
\newtheorem{remark}[defn]{Remark}


\newcommand{\QQ}{\mathbb Q}

\newcommand{\ZZ}{\mathbb Z}
\newcommand{\Z}{\mathbb Z}
\newcommand{\Q}{\mathbb{Q}}

\newcommand{\PP}{\mathbb P}

\newcommand{\arrow}{\longrightarrow}




\newcommand{\tor}{\mathrm{tors}}


\begin{document}
	
	
	
	\title[A classification of curious Galois groups as direct products]{A classification of curious Galois groups as direct products}
	
\author{Garen Chiloyan}
\email{garen.chiloyan@gmail.com} 
\urladdr{https://sites.google.com/view/garenmath/home}



\subjclass{Primary: 11F80, Secondary: 11G05, 11G15, 14H52.}

\begin{abstract}
Let $N$ be a positive integer. Let $\operatorname{H}$ be a group of level $N$ and let $E$ be an elliptic curve defined over the rationals with $\textit{j}_{E} \neq 0, 1728$. Then the image $\overline{\rho}_{E,N}\left(\operatorname{Gal}\left(\overline{\QQ}/\QQ\right)\right)$, of the mod-$N$ Galois representation attached to $E$, 
is conjugate to a subgroup of $\operatorname{H}$ if and only if $E$ corresponds to a non-cuspidal rational point on the modular curve $\operatorname{X}_{\operatorname{H}}$ generated by $\operatorname{H}$. In this article, we are interested when $\overline{\rho}_{E,N}(G_{\QQ})$ is precisely $\operatorname{H}$. More precisely, we classify all groups $\operatorname{H}$ that are direct products of subgroups $\operatorname{H}$ for which $\operatorname{X}_{\operatorname{H}}$ contains infinitely many non-cuspidal rational points but there is no elliptic curve $E/\QQ$ such that $\overline{\rho}_{E,N}\left(\operatorname{Gal}\left(\overline{\QQ}/\QQ\right)\right)$ is conjugate to $\operatorname{H}$.
\end{abstract}

\maketitle

\section{Introduction}

In Section 6 of \cite{Rouse}, Rouse and Zureick-Brown found a group $\operatorname{H}_{155}$, of level $16$ that generates a modular curve $\operatorname{X}_{155}$. The curve $\operatorname{X}_{155}$ is an elliptic curve with Mordell--Weil group isomorphic to $\ZZ \times \ZZ / 2 \ZZ$. There are infinitely many elliptic curves $E/\QQ$ up to isomorphism such that $\overline{\rho}_{E,16}(G_{\QQ})$ is conjugate to a proper subgroup of $\operatorname{H}_{155}$ and yet, there is no elliptic curve $E'/\QQ$ such that $\overline{\rho}_{E',16}(G_{\QQ})$ is conjugate to $\operatorname{H}_{155}$ precisely. The authors of \cite{Rouse} referred to groups like $\operatorname{H}_{155}$ as ``curious'' examples. The authors of \cite{Rouse} found seven ``curious'' examples of $2$-power-level in total, namely, $\operatorname{H}_{150}$, $\operatorname{H}_{153}$, $\operatorname{H}_{155}$, $\operatorname{H}_{156}$, $\operatorname{H}_{165}$, $\operatorname{H}_{166}$, and $\operatorname{H}_{167}$ in the labeling of the database (RZB) in \cite{Rouse}. The modular curves $\operatorname{X}_{\operatorname{H}}$ that are generated by the seven ``curious'' examples of $2$-power-level, $\operatorname{H}$, are elliptic curves with Mordell--Weil group isomorphic to $\ZZ \times \ZZ / 2 \ZZ$. Moreover, in each of the cases, the rational points on $\operatorname{X}_{\operatorname{H}}$ lift to covering modular curves. For the purposes of this paper, we define arithmetically admissible groups of level $N$ and curious Galois groups of level $N$.

\begin{definition}\label{Def 1}
Let $N$ be a positive integer and let $\operatorname{H}$ be a subgroup of $\operatorname{GL}(2, \ZZ / N \ZZ)$ of level $N$. Then $\operatorname{H}$ is an arithmetically admissible group of level $N$ if the following three statements are satisfied:
\begin{enumerate}
    \item $\operatorname{-Id} \in \operatorname{H}$,

    \item $\operatorname{Det}(\operatorname{H}) = \left(\ZZ / N \ZZ\right)^{\times}$,

    \item $\operatorname{H}$ contains an element that is conjugate over $\operatorname{GL}(2, \ZZ / N \ZZ)$ to $\begin{bmatrix} 1 & 0 \\ 0 & -1 \end{bmatrix}$ or $\begin{bmatrix}
        1 & 1 \\ 0 & -1
    \end{bmatrix}$
\end{enumerate}
\end{definition}

One would expect that if $\operatorname{X}_{\operatorname{H}}$ is a modular curve generated by a group $\operatorname{H}$ such that $\operatorname{X}_{\operatorname{H}}$ has infinitely many rational points, then there would be at least one elliptic curve $E/\QQ$ such that $\overline{\rho}_{E,N}(G_{\QQ})$ is conjugate to $\operatorname{H}$. This is not the case and with this failure, there exists many non-examples of sorts to a possible extension of Hilbert's irreducibility theorem in the case when the genus of the modular curve is equal to $1$.

\begin{definition}\label{Def 2}
Let $\operatorname{H}$ be an arithmetically admissible group of level $N$ and denote the modular curve defined by $\operatorname{H}$ by $\operatorname{X}_{\operatorname{H}}$. Then $\operatorname{H}$ is a curious Galois group of level $N$ if both of the following statements are satisfied.

\begin{enumerate}
    \item $\operatorname{X}_{\operatorname{H}}(\QQ)$ is infinite.
    \item There is no elliptic curve $E/\QQ$ such that $\overline{\rho}_{E,N}(G_{\QQ})$ is conjugate to $\operatorname{H}$.
\end{enumerate}
\end{definition}

In section 5 of \cite{Daniels2018SerresCO}, Daniels and Gonzalez-Jimenez found a single curious Galois group of level $24$ and a single curious Galois group of level $15$ which they denoted $[\texttt{8X5, 3Nn}]$ and $[\texttt{3Nn, 5S4}]$, respectively. By (unknowingly) using similar techniques from \cite{Daniels2018SerresCO}, the author of this paper found another curious Galois group of level $24$, namely $\mathcal{C}_{sp}(3) \times \operatorname{H}_{5}$, in section 9 of \cite{Chiloyan20232adicGI}. We will refer to groups (when available) and elliptic curves by their label from \cite{rouse_sutherland_zureick-brown_2022} and hyperlink them to the LMFDB \cite{lmfdb}. Work in Subsection 2.4 of \cite{rouse_sutherland_zureick-brown_2022} explains that all groups $\operatorname{H}$ in the LMFDB have a label of the form \texttt{N.i.g.n} where \texttt{N}, \texttt{i}, \texttt{g}, and \texttt{n}, are the decimal representations of the integers $N$, $i$, $g$, $n$ defined as follows:

\begin{itemize}
    \item $N = N(\operatorname{H})$ is the level of $\operatorname{H}$;
    \item $i = i(\operatorname{H})$ is the index of $\operatorname{H}$ as a subgroup of $\operatorname{GL}\left(2, \widehat{\ZZ}\right)$;
    \item $g = g(\operatorname{H})$ is the genus of the modular curve generated by $\operatorname{H}$ (which we will also call the genus of $\operatorname{H}$);
    \item $n$ is a positive integer and is an ordinal indicating the position of $\operatorname{H}$ among all subgroups of the same level, index, and genus, with $\operatorname{Det}(\operatorname{H}) = \widehat{\ZZ}^{\times}$ under an ordering defined by the authors of \cite{Rouse2021elladicIO}.
    
\end{itemize}

The main theorem and focus of this paper is Theorem \ref{main Theorem}. 
\begin{theorem}\label{main Theorem}

Let $m$ be a positive integer. Let $p_{1} < \ldots < p_{m}$ be a collection of primes and for each $i \in \left\{1, \ldots, m\right\}$, let $\operatorname{H}_{p_{i}}$ be a proper subgroup of $\operatorname{GL}(2, \ZZ_{p_{i}})$. Suppose that $\operatorname{H} = \operatorname{H}_{p_{1}} \times \ldots \times \operatorname{H}_{p_{m}}$ is a curious Galois group. Then $m$ equals $1$ or $2$.

\begin{itemize}
    \item If $m = 1$, then $\operatorname{H}$ is one of the following seven groups:

    \begin{enumerate}
        \item \href{https://lmfdb.org/ModularCurve/Q/16.24.1.5/}{\texttt{16.24.1.5}},
        \item \href{https://lmfdb.org/ModularCurve/Q/16.24.1.10/}{\texttt{16.24.1.10}},
        \item \href{https://lmfdb.org/ModularCurve/Q/16.24.1.11/}{\texttt{16.24.1.11}},
        \item \href{https://lmfdb.org/ModularCurve/Q/16.24.1.13/}{\texttt{16.24.1.13}},
        \item \href{https://lmfdb.org/ModularCurve/Q/16.24.1.15/}{\texttt{16.24.1.15}},
        \item \href{https://lmfdb.org/ModularCurve/Q/16.24.1.17/}{\texttt{16.24.1.17}},
        \item \href{https://lmfdb.org/ModularCurve/Q/16.24.1.19/}{\texttt{16.24.1.19}}.
    \end{enumerate}

    \item If $m = 2$, then $\operatorname{H}$ is one of the following fifteen groups:
    \begin{enumerate}
        \item \href{https://lmfdb.org/ModularCurve/Q/15.15.1.1/}{\texttt{15.15.1.1}}, the direct product of \texttt{3.3.0.1} and \texttt{5.5.0.1},
        \item the direct product of \texttt{8.2.0.1} and \texttt{13.14.0.1},
        \item the direct product of \texttt{8.2.0.2} and \texttt{9.12.0.1},
        \item \href{https://lmfdb.org/ModularCurve/Q/40.12.1.5/}{\texttt{40.12.1.5}}, the direct product of \texttt{8.2.0.1} and \texttt{5.6.0.1},
        \item \href{https://lmfdb.org/ModularCurve/Q/40.20.1.2/}{\texttt{40.20.1.2}}, the direct product of \texttt{8.2.0.2} and \texttt{5.10.0.1},
        \item \href{https://lmfdb.org/ModularCurve/Q/40.36.1.2/}{\texttt{40.36.1.2}}, the direct product of \texttt{8.6.0.2} and \texttt{5.6.0.1},
        \item \href{https://lmfdb.org/ModularCurve/Q/40.36.1.4/}{\texttt{40.36.1.4}}, the direct product of \texttt{8.6.0.4} and \texttt{5.6.0.1},
        \item \href{https://lmfdb.org/ModularCurve/Q/40.36.1.5/}{\texttt{40.36.1.5}}, the direct product of \texttt{8.6.0.5} and \texttt{5.6.0.1},
        \item \href{https://lmfdb.org/ModularCurve/Q/24.6.1.2/}{\texttt{24.6.1.2}}, the direct product of \texttt{8.2.0.2} and \texttt{3.3.0.1},
        \item \href{https://lmfdb.org/ModularCurve/Q/24.12.1.3/}{\texttt{24.12.1.3}}, the direct product of \texttt{8.2.0.2} and \texttt{3.6.0.1},
        \item \href{https://lmfdb.org/ModularCurve/Q/24.24.1.2/}{\texttt{24.24.1.2}}, the direct product of \texttt{8.2.0.2} and \texttt{3.12.0.1},
        \item \href{https://lmfdb.org/ModularCurve/Q/24.18.1.5/}{\texttt{24.18.1.5}}, the direct product of \texttt{8.6.0.1} and \texttt{3.3.0.1},
        \item \href{https://lmfdb.org/ModularCurve/Q/24.18.1.8/}{\texttt{24.18.1.8}}, the direct product of \texttt{8.6.0.6} and \texttt{3.3.0.1},
        \item \href{https://lmfdb.org/ModularCurve/Q/24.36.1.3/}{\texttt{24.36.1.3}}, the direct product of \texttt{8.6.0.6} and \texttt{3.6.0.1},
        \item \href{https://lmfdb.org/ModularCurve/Q/24.36.1.8/}{\texttt{24.36.1.8}}, the direct product of \texttt{8.6.0.1} and \texttt{3.6.0.1}.
    \end{enumerate}
\end{itemize}
\end{theorem}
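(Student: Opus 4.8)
The plan is to pin down the structure forced by the hypothesis, reduce to a finite list of candidates via the classification of prime-power modular curves with infinitely many rational points, and then decide curiousness case by case through an entanglement analysis. Write $N=p_1^{k_1}\cdots p_m^{k_m}$ for the level of $\operatorname{H}$. Since $\operatorname{H}=\operatorname{H}_{p_1}\times\cdots\times\operatorname{H}_{p_m}$, the curve $\operatorname{X}_{\operatorname{H}}$ is the fiber product over the $j$-line of the curves $\operatorname{X}_{\operatorname{H}_{p_i}}$, and each projection $\operatorname{X}_{\operatorname{H}}\to\operatorname{X}_{\operatorname{H}_{p_i}}$ has finite fibers over a given $j$-invariant; hence infinitude of $\operatorname{X}_{\operatorname{H}}(\QQ)$ forces $\operatorname{X}_{\operatorname{H}_{p_i}}(\QQ)$ to be infinite for every $i$, and in particular each $\operatorname{H}_{p_i}$ has surjective determinant, contains $-\operatorname{Id}$ and a complex-conjugation element, and has $\operatorname{X}_{\operatorname{H}_{p_i}}$ of genus at most $1$. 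Moreover $\operatorname{X}_{\operatorname{H}}$ is geometrically connected since $\operatorname{Det}(\operatorname{H})=\widehat{\ZZ}^{\times}$, so no nontrivial cover of it has a rational section, and a thin-set/Hilbert-irreducibility argument shows that if $\operatorname{X}_{\operatorname{H}}$ had genus $0$ then there would exist $E/\QQ$ with $\overline{\rho}_{E,N}(G_{\QQ})=\operatorname{H}$; thus a curious $\operatorname{H}$ necessarily has $\operatorname{X}_{\operatorname{H}}$ of genus exactly $1$ and positive Mordell--Weil rank.

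Next I would bound $m$ and the primes $p_i$. Only finitely many primes $p$ admit a proper subgroup of $\operatorname{GL}(2,\ZZ_p)$ whose modular curve has genus at most $1$ and infinitely many rational points --- by the classification of prime-power-level modular curves with infinitely many rational points (Sutherland--Zywina for odd $p$, Rouse--Zureick-Brown for $p=2$) together with the known determinations of rational points on the relevant $\operatorname{X}_0(N)$ and Cartan-normalizer curves --- and in practice the possible primes are $2,3,5,7,13$. Running over the finitely many admissible choices of factors at these primes, one checks that every fiber product of three or more of the curves $\operatorname{X}_{\operatorname{H}_{p_i}}$ has genus at least $2$, which can be read off from the genus formula for modular curves or computed directly; hence no curious group has $m\ge 3$, so $m\in\{1,2\}$.

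For $m=1$ one inspects the finitely many $p$-adic groups with genus-$1$ positive-rank modular curve. For odd $p$ every such group occurs as $\overline{\rho}_{E,p^{k}}(G_{\QQ})$ for some $E/\QQ$ --- Hilbert irreducibility applied over the positive-rank elliptic curve $\operatorname{X}_{\operatorname{H}}$ again produces a point with maximal image --- so none is curious, while for $p=2$ exactly the seven groups \texttt{16.24.1.5}, \texttt{16.24.1.10}, \texttt{16.24.1.11}, \texttt{16.24.1.13}, \texttt{16.24.1.15}, \texttt{16.24.1.17}, \texttt{16.24.1.19} fail to be realized, recovering (and completing the verification underlying) the seven ``curious'' examples of Rouse--Zureick-Brown. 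For $m=2$ one then fixes a pair of primes $p<q$ from the short list and a pair of factors $(\operatorname{H}_p,\operatorname{H}_q)$, each with infinitely many rational points, such that the fiber product $\operatorname{X}_{\operatorname{H}_p\times\operatorname{H}_q}$ has genus $1$ and positive rank --- in particular no pair involving one of the seven level-$16$ groups survives this test, since the fiber product then has genus $\ge 2$ --- and asks whether $\operatorname{H}_p\times\operatorname{H}_q$ is realized.

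By Goursat's lemma, $\operatorname{H}_p\times\operatorname{H}_q$ fails to be realized precisely when there is a nontrivial common quotient $Q$ of $\operatorname{H}_p$ and $\operatorname{H}_q$ for which the degree-$|Q|$ subextension of $\QQ(E[p^{a}])$ cut out by the kernel on the $p$-side is forced, as a $G_{\QQ}$-extension, to coincide with the corresponding subextension of $\QQ(E[q^{b}])$ for every $E$ on the product curve --- a forced entanglement. Concretely this happens when both division fields are forced to contain the same fixed cyclotomic or quadratic field, or the same ``varying'' quadratic field of discriminant type such as $\QQ(\sqrt{d\,\Delta_{E}})$. The remaining work is therefore to determine, for each candidate $\operatorname{H}_p$, the exact $G_{\QQ}$-structure of the small Galois subextensions of $\QQ(E[p^{a}])$ that are constant across the family (the cyclotomic pieces, the discriminant-type quadratic field, and any further abelian entanglement), and then, for each admissible pair, to decide whether these subextensions on the two sides can be made independent or are necessarily linked; the pairs where they are necessarily linked give precisely the fifteen groups in the statement. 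This matching is the main obstacle: it is a finite but delicate bookkeeping problem --- carried out with explicit models of the modular curves and a computer algebra system --- in which the genus-$1$ constraint, the positive-rank constraint, and the structure of the entanglement fields must all be reconciled at once.
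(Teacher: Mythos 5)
There are two genuine problems. First, your argument for $m=1$ with $p$ odd invokes ``Hilbert irreducibility applied over the positive-rank elliptic curve $\operatorname{X}_{\operatorname{H}}$'' to produce a point with maximal image. No such principle exists: $\QQ$-points of a positive-rank elliptic curve can be entirely covered by images of rational points on finitely many degree-$\geq 2$ covers (e.g.\ via isogenies and their twists), and this failure of Hilbert irreducibility in genus $1$ is precisely the phenomenon that makes curious groups possible at all --- it is why the seven level-$16$ groups and the fifteen product groups exist. The correct treatment (as in the paper) is that for odd $p\neq 11$ every prime-power group with infinitely many rational points has genus $0$, so the thin-set argument on $\PP^{1}$ applies, while for $p=11$ the single genus-$1$ group \texttt{11.55.1.1} is realized by an explicit elliptic curve (\texttt{232544.f1}); your argument as stated would not distinguish \texttt{11.55.1.1} from a curious group.

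Second, for $m=2$ you reduce the theorem to deciding, pair by pair, whether an entanglement is ``forced,'' and then assert that the forced cases are exactly the fifteen listed groups, calling this a ``finite but delicate bookkeeping problem.'' That bookkeeping \emph{is} the theorem: deciding that a product group $\operatorname{H}$ is curious requires showing that every rational point of the genus-$1$ curve $\operatorname{X}_{\operatorname{H}}$ lifts to one of the covering modular curves $\operatorname{X}_{\operatorname{H}_{i}}$ attached to proper (typically entangled) subgroups $\operatorname{H}_{i}\subsetneq\operatorname{H}$, and deciding that it is not curious requires either an explicit elliptic curve realizing $\operatorname{H}$, a rank-$0$ or genus-$\geq 2$ computation, or a pigeonhole argument when $\operatorname{H}$ has no proper arithmetically admissible subgroup of genus $<2$. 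The paper carries this out concretely: it computes genera of all relevant direct products of Sutherland--Zywina groups, identifies the index-$2$ (and one index-$3$) subgroups, uses the isogeny $\phi\colon\operatorname{X}_{\operatorname{H}_{i}}\to\operatorname{X}_{\operatorname{H}}$ together with the decomposition $\psi=\tau_{\psi(\mathcal{O})}\circ(\pm\phi)$ and a ``translation type'' analysis, and pins down the types using explicit LMFDB elliptic curves whose mod-$N$ images land in one subgroup but not another. Your Goursat-style criterion also does not obviously capture the case \texttt{40.36.1.5}, which is curious because it has odd index $3$ in \texttt{40.12.1.5} and every rational point of its curve already lifts through the index-$2$ covers of \texttt{40.12.1.5}, rather than because of a quadratic entanglement on the two prime-power sides. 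Without executing the case analysis, the proposal establishes the framework but not the classification.
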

At the moment, a full classification of curious Galois groups seems out of reach. One would likely need to investigate the Cummins--Pauli database \cite{cumminspauli} of subgroups of $\operatorname{SL}\left(2, \widehat{\ZZ}\right)$ of genus $0$ or $1$ and find a bounding condition to treat the case of non-trivial entanglements. See \cite{Entanglement1} , \cite{Entanglement2}, and \cite{Entanglement3} for more on non-trivial entanglements.

\begin{ack}
    The author would like to thank Harris Daniels, \'Alvaro Lozano-Robledo, Rakvi, Andrew Sutherland, and David Zureick-Brown for many helpful conversations during the writing of this paper.
\end{ack}

\section{Structure and methodology}\label{subsection structure and methodology}

In Section \ref{section background}, we briefly go over the objects of study of the paper; modular curves, elliptic curves and Galois representations. In Section \ref{section Galois images of elliptic curves defined over QQ}, we briefly introduce the work of Sutherland--Zywina and Rouse--Zureick-Brown in the classification of modular curves of prime-power level that contain infinitely many rational points. In Section \ref{section product groups}, we go over some group-theoretic lemmas. Let $p$ and $q$ be distinct primes and $M$ and $N$ be powers of $p$ and $q$ respectively. Let $\operatorname{H}_{p}$ be a group of level $M$ and let $\operatorname{H}_{q}$ be a group of level $N$. Then $\operatorname{H} = \operatorname{H}_{p} \times \operatorname{H}_{q}$ is a group of level $MN$. By an application of the Chinese remainder theorem, we can determine a subgroup of $\operatorname{GL}(2, \ZZ / MN \ZZ)$ that is isomorphic to $\operatorname{H}$. If $\operatorname{H}$ generates a modular curve with infinitely many rational points, then work by Sutherland and Zywina shows that $p$ and $q$ are primes less than or equal to $13$.

Section \ref{section main work} contains the proof of Theorem \ref{main Theorem} broken up into two theorems. We take direct products of pairs of groups from the Sutherland--Zywina database to get a group $\operatorname{H}$ of level $N$ where $N$ is a positive integer divisible by two primes. We focus on the groups $\operatorname{H}$ that are arithmetically admissible. We then use code from \cite{rouse_sutherland_zureick-brown_2022} to compute the genus. If the genus of $\operatorname{H}$ is strictly greater than $1$, then work by Faltings shows that the modular curve $\operatorname{X}_{\operatorname{H}}$ defined by $\operatorname{H}$ has finitely many points defined over $\QQ$. If the genus of $\operatorname{H}$ is equal to $0$, then either $\operatorname{X}_{\operatorname{H}}$ has no points defined over $\QQ$ or $\operatorname{X}_{\operatorname{H}}$ is isomorphic to $\PP^{1}$ and there is an elliptic curve $E/\QQ$ such that $\overline{\rho}_{E,N}(G_{\QQ})$ is conjugate to $\operatorname{H}$. In either case, $\operatorname{H}$ is not a curious Galois group. Hence, if $\operatorname{H}$ is a curious Galois group, then $\operatorname{X}_{\operatorname{H}}$ is an elliptic curve of positive rank.

There were a couple of ways to investigate groups $\operatorname{H}$ that generate modular curves of genus $1$ and positive rank to determine if they are or are not curious Galois groups. We first search the LMFDB for elliptic curves $E/\QQ$ such that $\overline{\rho}_{E,N}(G_{\QQ})$ is conjugate to $\operatorname{H}$. If there is such an example of an elliptic curve, then $\operatorname{H}$ is not a curious Galois group. If no such example of an elliptic curve exists in the LMFDB, then we inspect the subgroups of $\operatorname{H}$ of genus less than $2$. If $\operatorname{H}$ contains no subgroup of genus less than $2$, then finitely many rational points on $\operatorname{X}_{\operatorname{H}}$ correspond to elliptic curves $E/\QQ$ such that $\overline{\rho}_{E,N}(G_{\QQ})$. As $\operatorname{X}_{\operatorname{H}}$ contains infinitely many rational points, there is an elliptic curve $E'/\QQ$ such that $\overline{\rho}_{E,N}(G_{\QQ})$ is conjugate to $\operatorname{H}$ and hence, $\operatorname{H}$ is not a curious Galois group.

If all else fails, we suspect that $\operatorname{H}$ is a curious Galois group. Note that either $\operatorname{X}_{\operatorname{H}}(\QQ)$ is isomorphic to $\ZZ \times \ZZ / 2 \ZZ$ or $\ZZ \times \ZZ / 2 \ZZ \times \ZZ / 2 \ZZ$. A curious Galois group that is a direct product of groups contains some collection of proper subgroups $\operatorname{H}_{1}, \ldots, \operatorname{H}_{m}$ such that the modular curve $\operatorname{X}_{\operatorname{H}_{i}}$ generated by $\operatorname{H}_{i}$ is isomorphic to the modular curve $\operatorname{X}_{j}$ generated by $\operatorname{H}_{j}$ for $1 \leq i < j \leq m$. Moreover, $\operatorname{X}_{\operatorname{H}_{i}}$ is an elliptic curve such that $\operatorname{X}_{\operatorname{H}_{i}}(\QQ)$ is isomorphic to $\ZZ \times \ZZ / 2 \ZZ$ for all $1 \leq i \leq m$. Note that $[\operatorname{H} : \operatorname{H}_{i}] = p$ where $p = 2$ or $3$ and there is an isogeny $\phi \colon \operatorname{X}_{\operatorname{H}_{i}} \to \operatorname{X}_{\operatorname{H}}$ of degree $p$ defined over $\QQ$. For $1 \leq i \leq m$, let $\pi_{\operatorname{H}_{i}} \colon \operatorname{X}_{\operatorname{H}_{i}} \to \operatorname{X}_{\operatorname{H}}$ be the map from $\operatorname{X}_{\operatorname{H}_{i}}$ to the \textit{j}-line. Then there is a rational morphism $\psi \colon \operatorname{X}_{\operatorname{H}_{i}} \to \operatorname{X}_{\operatorname{H}}$ of degree $[\operatorname{H} : \operatorname{H}_{i}]$ such that the following diagram commutes.
\begin{center}
    \begin{tikzcd}
\operatorname{X}_{\operatorname{H}_{i}} \arrow[d, "\psi"'] \arrow[rd, "\pi_{\operatorname{H}_{i}}"] &                \\
\operatorname{X}_{\operatorname{H}} \arrow[r, "\pi_{\operatorname{H}}"']                            & \mathbb{P}^{1}
\end{tikzcd}
\end{center}
If for each rational point $P$ on $\operatorname{X}_{\operatorname{H}}$ there is a rational point $P_{i}$ on $\operatorname{X}_{\operatorname{H}_{i}}$ for some $1 \leq i \leq m$ such that $\pi_{\operatorname{H}}(P) = \pi_{\operatorname{H}_{i}}(P)$, then all rational points on $\operatorname{X}_{\operatorname{H}}$ correspond to rational points on $\operatorname{X}_{\operatorname{H}_{1}}, \ldots, \operatorname{X}_{\operatorname{H}_{m}}$ and $\operatorname{H}$ is a curious Galois group. For a point $P$ on $\operatorname{X}_{\operatorname{H}}$, let $\tau_{P} \colon \operatorname{X}_{\operatorname{H}} \to \operatorname{X}_{\operatorname{H}}$ be the translation-by-$P$ map and let $\mathcal{O}$ be the identity of $\operatorname{X}_{\operatorname{H}}$. Note that
$$\psi = \tau_{\mathcal{O}} \circ \psi = \tau_{\psi(\mathcal{O})} \circ \tau_{-\psi(\mathcal{O})} \circ \psi = \tau_{\psi(\mathcal{O})} \circ \left(\tau_{-\psi(\mathcal{O})} \circ \psi \right).$$
Moreover, note that $\tau_{-\psi(\mathcal{O})} \circ \psi$ is a rational morphism that maps $\mathcal{O}$ to $\mathcal{O}$ and thus, is an isogeny of degree $p$. Thus, $\tau_{-\psi(\mathcal{O})} \circ \psi = \phi$ or $-\phi$ and so, $\psi = \tau_{\psi(\mathcal{O})} \circ \phi'$ where $\phi' = \pm \phi$. We will show that as $\psi(\mathcal{O})$ ranges over the different types of elements of $\operatorname{X}_{\operatorname{H}}(\QQ)$, that there is a correspondence between $\operatorname{X}_{\operatorname{H}}(\QQ)$ and $\bigcup_{i=1}^{m}\operatorname{X}_{\operatorname{H}_{i}}(\QQ)$.

At times, $\operatorname{X}_{\operatorname{H}}(\QQ) \cong \ZZ \times \ZZ / 2 \ZZ$ and $\operatorname{H}$ contains a single subgroup $\operatorname{H}_{1}$ of index $3$. In this case, $\operatorname{X}_{\operatorname{H}_{1}}(\QQ) \cong \ZZ \times \ZZ / 2 \ZZ$ and there is an isogeny $\phi \colon \operatorname{X}_{\operatorname{H}_{1}} \to \operatorname{X}_{\operatorname{H}}$ of degree $3$ mapping $\operatorname{X}_{\operatorname{H}_{1}}(\QQ)$ bijectively onto $\operatorname{X}_{\operatorname{H}}(\QQ)$. Thus, for a rational point $P$ on $\operatorname{X}_{\operatorname{H}}$, we can find a point $P_{1}$ on $\operatorname{X}_{\operatorname{H}_{1}}$ such that $\phi(P_{1}) = P-\psi(\mathcal{O})$ or $-P+\psi(\mathcal{O})$ depending on whether $\psi = \tau_{\psi(\mathcal{O})} \circ \phi$ or $\tau_{\psi(\mathcal{O})} \circ -\phi$. Hence, $\psi(P_{1}) = P$ and the \textit{j}-invariant corresponding to $P_{1}$ is equal to the \textit{j}-invariant corresponding to $P$.

Other times, we will focus on subgroups $\operatorname{H}_{1}, \ldots, \operatorname{H}_{m}$ of $\operatorname{H}$ of index $2$ with $m = 2$ or $4$. In this case, $\operatorname{X}_{\operatorname{H}_{i}}(\QQ) = \left\langle g_{1,i}, g_{2,i} \right\rangle \cong \ZZ \times \ZZ / 2 \ZZ$ where $g_{1,i}$ is a generator of $\operatorname{X}_{\operatorname{H}_{i}}(\QQ)$ of infinite order and $g_{2,i}$ is a rational point on $\operatorname{X}$ of order $2$ for all $1 \leq i \leq m$ and $\operatorname{X}_{\operatorname{H}}(\QQ) \cong \ZZ \times \ZZ / 2 \ZZ$ or $\ZZ \times \ZZ / 2 \ZZ \times \ZZ / 2 \ZZ$. If $\operatorname{X}_{\operatorname{H}}(\QQ) = \left\langle g_{1}, g_{2} \right\rangle \cong \ZZ \times \ZZ / 2 \ZZ$, then the rational points on $\operatorname{X}_{\operatorname{H}}$ are of the form $[A] \cdot g_{1} + [B] \cdot g_{2}$ where $A$ is an integer and $B = 0$ or $1$. Let $g = [A] \cdot g_{1} + [B] \cdot g_{2}$ and $g' = [A'] \cdot g_{1} + [B'] \cdot g_{2}$ be rational points on $\operatorname{X}_{\operatorname{H}}$. We will say that the maps $\tau_{P}$ and $\tau_{P'}$ are of the same type if both $A \equiv A' \mod 2$ and $B \equiv B' \mod 2$. For $i \in 1, \ldots, m$, let $g'$ be an infinite generator on $\operatorname{X}_{\operatorname{H_{i}}}$. For all of our curious Galois groups that are direct products in this case, we have an isogeny $\phi \colon \operatorname{X}_{\operatorname{H}_{i}} \to \operatorname{X}_{\operatorname{H}}$ generated by the rational point on $\operatorname{X}_{\operatorname{H}_{i}}$ of order $2$, such that $\phi(g') = [\pm 2] \cdot g_{1} + D$ where $D$ is a $2$-torsion point on $\operatorname{X}_{\operatorname{H}}$.

It remains to show that for $1 \leq i,j \leq m$ with $i \neq j$, the translation-by-$\psi(\mathcal{O})$ maps are not of the same type. We find elliptic curves $E_{i}$ from the LMFDB database such that $\overline{\rho}_{E_{i},N}(G_{\QQ})$ is conjugate to a subgroup of $\operatorname{H}_{i}$ but not conjugate to a subgroup of $\operatorname{H}_{j}$ for $1 \leq i, j \leq m$ with $i \neq j$. Suppose then that $\psi_{i} \colon \operatorname{X}_{\operatorname{H}_{i}} \to \operatorname{X}_{\operatorname{H}}$ and $\psi_{j} \colon \operatorname{X}_{\operatorname{H}_{j}} \to \operatorname{X}_{\operatorname{H}}$ are the rational morphisms of degree $2$ that correspond with the maps to the \textit{j}-line and assume that $\psi_{i} = \tau_{\psi_{i}(\mathcal{O})} \circ \phi$ and $\psi_{j} = \tau_{\psi_{j}(\mathcal{O})} \circ \phi$ with $\psi_{i}$ and $\psi_{j}$ being of the same type. There is an integer $R$ such that $\psi_{i}([R] \cdot g')$ that corresponds to an elliptic curve $E/\QQ$ such that $\overline{\rho}_{E,N}(G_{\QQ})$ is conjugate to a subgroup of $\operatorname{H}_{i}$ but not to a subgroup of $\operatorname{H}_{j}$. As $\tau_{i}$ and $\tau_{j}$ are of the same type, there is an integer $S$ such that $\psi_{j}([S] \cdot g')$ that corresponds to an elliptic curve $E'/\QQ$ that is a quadratic twist of $E$ such that $\overline{\rho}_{E',N}(G_{\QQ})$ is conjugate to a subgroup of $\operatorname{H}_{j}$ but not to a subgroup of $\operatorname{H}_{i}$ but that is a contradiction. This is enough to show that $\operatorname{H}$ is a curious Galois group. On the other hand, there is one example of a curious Galois group $\operatorname{H}$ such that $\operatorname{X}_{\operatorname{H}}(\QQ) = \left\langle g_{1}, g_{2}, g_{3} \right\rangle \cong \ZZ \times \ZZ / 2 \ZZ \times \ZZ / 2 \ZZ$ with $g_{1}$ a generator of $\operatorname{X}_{\operatorname{H}}$ of infinite order. In this case, the isogeny $\phi \colon \operatorname{X}_{\operatorname{H}_{i}} \to \operatorname{X}_{\operatorname{H}}$ maps a generator $g'$ of $\operatorname{X}_{\operatorname{H}_{i}}$ of infinite order to $\pm g_{1} + D$ where $D$ is a $2$-torsion point of $\operatorname{X}_{\operatorname{H}}$. We end with an example of a curious Galois group which is not a direct product of Galois groups of prime-power level (greater than $1$).

\section{Background}\label{section background}

An elliptic curve defined over $\QQ$ is a smooth projective, algebraic curve of genus $1$ defined over $\QQ$ with a point defined over $\QQ$. Equivalently, an elliptic curve $E/\QQ$ is a smooth projective curve with the following homogeneous equation
$$E: Y^{2}Z + a_{1}XYZ + a_{3}YZ^{2} = X^{3} + a_{2}X^{2}Z + a_{4}XZ^{2} + a_{6}Z^{3}$$
with $a_{1}, a_{2}, a_{3}, a_{4},$ and $a_{6}$ being rational constants. With this equation, $E$ has the structure of an abelian group with identity being the single point at infinity, $\mathcal{O} = [0:1:0]$. After dehomogenizing with respect to $Z$, we get the following equation
$$E: y^{2} + a_{1}xy + a_{3}y = x^{3} + a_{2}x^{2} + a_{4}x + a_{6}.$$
Let $N$ be a positive integer. The set of all points on $E$ of order dividing $N$ with coordinates in $\overline{\QQ}$ is a group, denoted $E[N]$ and is isomorphic to $\ZZ / N \ZZ \times \ZZ / N \ZZ$. An element of $E[N]$ is called an $N$-torsion point. The group $G_{\QQ}:= \operatorname{Gal}\left(\overline{\QQ}/\QQ\right)$ acts on $E[N]$ for all positive integers $N$. From this action, we have the mod-$N$ Galois representation attached to $E$:
$$ \overline{\rho}_{E,N} \colon G_{\QQ} \to \operatorname{Aut}(E[N]).$$
After identifying $E[N] \cong \ZZ / N \ZZ \times \ZZ / N \ZZ$ and fixing a set of (two) generators of $E[N]$, we may consider the mod-$N$ Galois representation attached to $E$ as
$$\overline{\rho}_{E,N} \colon G_{\QQ} \to \operatorname{GL}(2,\ZZ / N \ZZ).$$

\subsection{Quadratic Twists}

Let $E : y^{2} = x^{3} + Ax + B$ be an elliptic curve defined over $\QQ$ and let $d$ be a non-zero integer. Then the quadratic twist of $E$ by $d$ is the elliptic curve $E^{(d)} : y^{2} = x^{3} + d^{2}Ax + d^{3}B$. Equivalently, $E^{(d)}$ is isomorphic to the elliptic curve $E^{(d)} : dy^{2} = x^{3} + Ax + B$. Then $E$ is isomorphic to $E^{(d)}$ over $\QQ(\sqrt{d})$ by the map
$$\phi \colon E \to E^{(d)}$$
defined by fixing $\mathcal{O}$ and mapping a non-zero point $(a,b)$ on $E$ to $\left(a,\frac{b}{\sqrt{d}}\right)$. Moreover, the \textit{j}-invariant of $E$ is equal to the \textit{j}-invariant of $E^{(d)}$. Conversely, if $E'/\QQ$ is an elliptic curve such that the \textit{j}-invariant of $E$ is equal to the \textit{j}-invariant of $E'$, with $\textit{j}_{E} \neq 0, 1728$, then $E$ is a (possibly trivial) quadratic twist of $E'$. We present an analogous definition of quadratic twists of subgroups of $\operatorname{GL}(2, \ZZ / N \ZZ)$.

\begin{definition}
Let $N$ be a positive integer and let $\operatorname{H_{1}}$ and $\operatorname{H_{2}}$ be subgroups of $\operatorname{GL}(2, \ZZ / N \ZZ)$. Suppose that $\operatorname{H_{2}}$ is the same as $\operatorname{H_{1}}$ up to multiplying some elements of $\operatorname{H_{1}}$ by $\operatorname{-Id}$ or that $\operatorname{H}_{2} = \left\langle \operatorname{H}_{1}, \operatorname{-Id} \right\rangle$. Then $\operatorname{H_{1}}$ is said to be a quadratic twist of $\operatorname{H_{2}}$ (and vice versa).
\end{definition}
Let $\operatorname{H_{1}}$ and $\operatorname{H_{2}}$ be subgroups of $\operatorname{GL}(2, \ZZ / N \ZZ)$. Let $\operatorname{H_{1}'} = \left\langle \operatorname{H_{1}}, \operatorname{-Id} \right\rangle$ and let $\operatorname{H_{2}'} = \left\langle \operatorname{H_{2}}, \operatorname{-Id} \right\rangle$. If $\operatorname{H_{1}'} = \operatorname{H_{2}'}$ then both $\operatorname{H}_{1}$ and $\operatorname{H}_{1}'$ are quadratic twists of both $\operatorname{H_{2}}$ and $\operatorname{H_{2}'}$. Conversely, suppose that $\operatorname{H_{1}}$ is a quadratic twist of $\operatorname{H_{2}}$. Then either the order of $\operatorname{H_{1}}$ is equal to the order of $\operatorname{H_{2}}$, the order of $\operatorname{H_{1}}$ is equal to twice the order of $\operatorname{H_{2}}$, or the order of $\operatorname{H_{2}}$ is equal to twice the order of $\operatorname{H_{1}}$. The second case happens when $\operatorname{H_{2}}$ does not contain $\operatorname{-Id}$ and $\operatorname{H_{1}} = \left\langle \operatorname{H_{2}}, \operatorname{-Id} \right\rangle$ and the third case happens when $\operatorname{H_{1}}$ does not contain $\operatorname{-Id}$ and $\operatorname{H_{2}} = \left\langle \operatorname{H_{1}}, \operatorname{-Id} \right\rangle$.

\begin{remark}
    Let $\operatorname{H}$ be a subgroup of $\operatorname{GL}(2, \ZZ / N \ZZ)$. Suppose that $\operatorname{H}$ does not contain $\operatorname{-Id}$ and that $\operatorname{H'} = \left\langle \operatorname{H}, \operatorname{-Id} \right\rangle$. Then $\operatorname{H}$ is a quadratic twist of $\operatorname{H'}$. Moreover, if a group $\operatorname{H_{1}}$ is a quadratic twist of a group $\operatorname{H_{2}}$, then either the order of $\operatorname{H_{1}}$ is equal to the order of $\operatorname{H_{2}}$ or the order of $\operatorname{H_{1}}$ (respectively, $\operatorname{H_{2}}$) is equal to twice the order of $\operatorname{H_{2}}$ (respectively, $\operatorname{H_{1}}$).
\end{remark}

\begin{remark}
Let $E/\QQ$ be an elliptic curve and let $N$ be an integer greater than or equal to $3$. Suppose that $\overline{\rho}_{E,N}(G_{\QQ}) = \operatorname{H}' = \left\langle \operatorname{H}, \operatorname{-Id}\right\rangle$ where $\operatorname{H}$ is a subgroup of $\operatorname{GL}(2, \ZZ / N \ZZ)$ that does not contain $\operatorname{-Id}$. Then there is a non-zero integer $d$ such that $\overline{\rho}_{E^{(d)},N}(G_{\QQ})$ is conjugate to $H$ (see Remark 1.1.3  and Section 10 in \cite{Rouse2021elladicIO}). Conversely, if $\overline{\rho}_{E,N}(G_{\QQ})$ is conjugate to $H$, then $\overline{\rho}_{E^{(d)},N}(G_{\QQ})$ is conjugate to $H'$ where $E^{(d)}$ is a quadratic twist of $E$ by a non-zero, square-free integer $d$, such that $\QQ(E[N])$ does not contain $\QQ(\sqrt{d})$.
\end{remark}

We can extend the definition of curious Galois groups to groups without $\operatorname{-Id}$. Let $\operatorname{H}$ be a subgroup of $\operatorname{GL}(2, \ZZ / N \ZZ)$ without $\operatorname{-Id}$. Then $\left\langle \operatorname{H}, \operatorname{-Id} \right\rangle$ is a curious Galois group if and only if $\operatorname{H}$ is a curious Galois group. Indeed, if $E/\QQ$ is an elliptic curve such that $\overline{\operatorname{\rho}}_{E,N}(\QQ)$ is conjugate to $\operatorname{H}$ (respectively, $\left\langle \operatorname{H}, \operatorname{-Id} \right\rangle$), then there is a non-zero integer $d$ such that $\overline{\rho}_{E^{(d)}, N}(G_{\QQ})$ is conjguate to $\left\langle \operatorname{H}, \operatorname{-Id} \right\rangle$ (respectively, $\operatorname{H}$). See Remark 1.1.3 and Section 10 in \cite{Rouse2021elladicIO} and particular, Lemma 5.24 and Corollary 5.25 in \cite{sutherland_2016}.

Let $\operatorname{K}$ be a number field. For each positive integer $N$, let $E[N]$ be the set of points on $E$ of order dividing $N$. Then $G_{\operatorname{K}} := \operatorname{Gal}(\overline{\operatorname{K}} / \operatorname{K})$ acts on $E[N]$. More specifically, let $\ell$ be a prime and let $T_{\ell}(E) := \underset{n \geq 0}{\varprojlim}E[\ell^{n}]$ be the Tate module of $E$. The action of $G_{\operatorname{K}}$ on $E[\ell^{n}]$ for each non-negative integer $n$ gives the mod-$\ell^{n}$ representation attached to $E$
$$\overline{\rho}_{E,\ell^{n}} \colon G_{\QQ} \to \operatorname{Aut}(E[\ell^{n}]).$$
After identifying $E\left[\ell^{n}
\right]$ with $\ZZ / \ell^{n} \ZZ \times \ZZ / \ell^{n} \ZZ$, we get that $\operatorname{Aut}(E[\ell^{n}]) \cong \operatorname{Aut}(\ZZ / \ell^{n} \ZZ \times \ZZ / \ell^{n} \ZZ) \cong \operatorname{GL}(2, \ZZ / \ell^{n} \ZZ)$.

In 1971, Serre \cite{serre1} proved the open image theorem.
\begin{theorem}[Serre, 1971, \cite{serre1}]
    Let $\operatorname{K}$ be a number field. Let $E/\operatorname{K}$ be an elliptic curve without complex multiplication. Then $\operatorname{\rho}_{E,\ell^{\infty}}(G_{\operatorname{K}})$ is an open subgroup of $T_{\ell}(\operatorname{K})$.
\end{theorem}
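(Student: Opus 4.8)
The plan is to prove that the closed subgroup $G:=\rho_{E,\ell^{\infty}}(G_{K})$ of $\GL(2,\ZZ_{\ell})$ has finite index --- which, for a profinite group, is the same as being open (taking the target of $\rho_{E,\ell^{\infty}}$ to be $\Aut(T_{\ell}(E))\cong\GL(2,\ZZ_{\ell})$). Since $\GL(2,\ZZ_{\ell})$ is an $\ell$-adic Lie group, $G$ is open in it precisely when its Lie algebra $\mathfrak{g}\subseteq\mathfrak{gl}_{2}(\QQ_{\ell})$ is all of $\mathfrak{gl}_{2}(\QQ_{\ell})$, so the whole problem reduces to identifying $\mathfrak{g}$.

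First I would record two standard inputs. By the Weil pairing, $\det\circ\rho_{E,\ell^{\infty}}$ is the $\ell$-adic cyclotomic character $\chi_{\ell}\colon G_{K}\to\ZZ_{\ell}^{\times}$, and since $K$ is a number field the extension $K(\mu_{\ell^{\infty}})/K$ has Galois group of finite index in $\ZZ_{\ell}^{\times}$; hence $\operatorname{tr}\colon\mathfrak{g}\to\QQ_{\ell}$ is surjective, so $\mathfrak{g}\neq 0$ and $\mathfrak{g}\not\subseteq\mathfrak{sl}_{2}$. Next I would invoke the classification of Lie subalgebras of $\mathfrak{gl}_{2}$: together with this trace condition, $\mathfrak{g}$ either contains $\mathfrak{sl}_{2}$ --- in which case, since $\operatorname{tr}$ restricted to $\mathfrak{g}$ is surjective, $\mathfrak{g}=\mathfrak{gl}_{2}(\QQ_{\ell})$ and we are done --- or, up to conjugacy, $\mathfrak{g}$ is contained in the Lie algebra of a Borel subgroup or in that of a non-split Cartan subgroup (the split-Cartan possibility being subsumed in the Borel case). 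The substance of the proof is to rule out these two cases using the hypothesis that $E$ has no complex multiplication.

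To exclude the Borel case: after replacing $K$ by a suitable finite extension $K'$, the representation leaves a line of $T_{\ell}(E)\otimes\QQ_{\ell}$ stable, so intersecting with the lattice $T_{\ell}(E)$ produces, for every $n$, a $K'$-rational cyclic subgroup $C_{n}\subset E$ of order $\ell^{n}$ with $C_{m}\subset C_{n}$ whenever $m<n$. Each $E_{n}:=E/C_{n}$ is $K'$-isogenous to $E$, hence has the same conductor, so Shafarevich's finiteness theorem (there are only finitely many elliptic curves over a number field with good reduction outside a fixed finite set) forces $E_{m}\cong E_{n}$ for some $m<n$. The composite of the quotient isogeny $E_{m}\to E_{m}/(C_{n}/C_{m})=E_{n}$ with an isomorphism $E_{n}\cong E_{m}$ is then an endomorphism of $E_{m}$ with cyclic kernel of order $\ell^{n-m}>1$; thus $E_{m}$ --- and hence $E$, since complex multiplication is an isogeny invariant --- has complex multiplication, a contradiction.

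To exclude the non-split Cartan case: passing to at most a quadratic extension $K''/K$, the representation $\rho_{E,\ell^{\infty}}|_{G_{K''}}$ becomes abelian, with commutative Cartan-valued image. By Tate's Hodge--Tate decomposition of the Tate module this abelian $\ell$-adic representation is locally algebraic at $\ell$, with Hodge--Tate weights $\{0,1\}$, so Serre's theory of abelian $\ell$-adic representations attaches it to an algebraic Hecke character; a two-dimensional such representation of these weights can arise only if $E/K''$, and therefore $E$, has complex multiplication --- again a contradiction. Combining the three cases gives $\mathfrak{g}=\mathfrak{gl}_{2}(\QQ_{\ell})$, as required. I expect the non-split Cartan case to be the main obstacle: deducing genuine complex multiplication from the bare commutativity of the image is exactly where one needs the full force of the theory of locally algebraic abelian $\ell$-adic representations (together with Tate's Hodge--Tate theorem). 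I would also remark that the sharper statement $\rho_{E,\ell^{\infty}}(G_{K})=\GL(2,\ZZ_{\ell})$ for all but finitely many $\ell$ is not being claimed here; establishing it further requires the mod-$\ell$ lifting lemmas, which are delicate at $\ell\in\{2,3\}$ (where one works modulo $\ell^{2}$ or $\ell^{3}$).
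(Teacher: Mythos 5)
The paper does not prove this statement at all: it is quoted as Serre's 1971 open image theorem with a citation to \cite{serre1} (and note the paper's own wording ``open subgroup of $T_{\ell}(\operatorname{K})$'' is a typo for $\operatorname{Aut}(T_{\ell}(E))\cong\operatorname{GL}(2,\ZZ_{\ell})$, which you silently and correctly repair). So there is no in-paper proof to compare with; what you have written is an outline of Serre's original argument, and as an outline it is accurate: the reduction of openness to $\mathfrak{g}=\mathfrak{gl}_{2}(\QQ_{\ell})$, the trace/cyclotomic-determinant input, the trichotomy (contains $\mathfrak{sl}_{2}$ / Borel / Cartan), the elimination of the Borel case via the tower of rational cyclic $\ell^{n}$-subgroups, Shafarevich finiteness, and the non-scalar endomorphism with cyclic kernel, and the elimination of the Cartan case via local algebraicity of abelian representations (Tate's Hodge--Tate theorem) and the Hecke-character machinery forcing CM. Be aware, though, that the last step of the Cartan case --- ``abelian image implies CM'' --- is itself the deep theorem in this story, and your sketch invokes it rather than proves it; you correctly flag this, but as written it is a citation-level step, not an argument. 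Two smaller remarks: in the Borel case you should make explicit that the stable line in $V_{\ell}$ is only stabilized by an open subgroup of the image, which is exactly why the finite extension $K'$ enters, and that CM is detected over $\overline{\QQ}$ so enlarging $K$ is harmless; and if one is willing to be anachronistic, the Cartan case nowadays follows in one line from Faltings' isogeny theorem, since an abelian (Cartan-valued) image makes $\operatorname{End}_{G_{K''}}(V_{\ell})$ a quadratic algebra and Faltings identifies it with $\operatorname{End}(E)\otimes\QQ_{\ell}$ --- a genuinely different route from Serre's 1971 proof, which had to use the theory of locally algebraic abelian $\ell$-adic representations because the Tate conjecture was not yet available.
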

\begin{corollary}[Serre, 1971, \cite{serre1}]
    Let $\operatorname{K}$ be a number field. Let $E/\operatorname{K}$ be an elliptic curve without complex multiplication. Then there is a positive integer $C_{E,\operatorname{K}}$ (possibly dependent on both $K$ and $E$) such that if $p$ is a prime greater than $C_{E,\operatorname{K}}$, then $\overline{\rho}_{E,p}$ is surjective.
\end{corollary}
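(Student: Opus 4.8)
The plan is to deduce the corollary from the strongest form of Serre's theorem. Serre in fact proved the adelic refinement of the open image theorem above: the image of the adelic representation $\rho_E\colon \GK \to \GL(2,\widehat{\ZZ})$ is open, hence of finite index $n$. Granting this, $\rho_E(\GK)$ contains the kernel of the reduction map $\GL(2,\widehat{\ZZ})\to \GL(2,\ZZ/n\ZZ)$, and for every prime $p\nmid n$ that kernel has full $p$-adic component $\GL(2,\ZZ_p)$; projecting modulo $p$ shows that $\overline{\rho}_{E,p}$ is surjective for all $p\nmid n$, so one may take $C_{E,K}=n$ (or the largest prime divisor of $n$). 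I would present this as the proof, since it is short and self-contained once the adelic statement is in hand; the $\ell$-adic statement alone is not formally enough, because entanglement between the $\ell$-adic images is a priori possible.

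If one insists on arguing directly, I would use Dickson's classification of subgroups of $\GL(2,\FF_p)$: for $p\geq 5$ a subgroup is either (a) contained in a Borel, (b) contained in the normalizer of a split or non-split Cartan, (c) has image in $\PGL(2,\FF_p)$ isomorphic to $A_4$, $S_4$, or $A_5$, or (d) contains $\SL(2,\FF_p)$. First I would note that $\det\circ\,\overline{\rho}_{E,p}$ is the mod-$p$ cyclotomic character and that $\QQ(\zeta_p)\cap K=\QQ$ for all but finitely many $p$ (a fixed number field has only finitely many subfields, and a subfield $F\neq\QQ$ lies in $\QQ(\zeta_p)$ for at most the one prime $p$ dividing $\disc F$), so this character is surjective for $p$ large; hence in case (d) one immediately gets $\overline{\rho}_{E,p}(\GK)=\GL(2,\FF_p)$, and it remains to exclude (a), (b), (c) for all large $p$. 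Case (c) I would dispose of locally: at a place $\mathfrak p\mid p$ the image of inertia contains an element whose (projective) order grows with $p$ — the order of $\chi_p$ restricted to $I_\mathfrak p$ in the ordinary or potentially multiplicative case, and a large divisor of $p+1$ coming from the level-$2$ fundamental characters in the supersingular case (after a bounded ramified base change to reach semistable/good reduction if needed) — whereas $A_4$, $S_4$, $A_5$ contain no element of projective order exceeding $5$.

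Cases (a) and (b) are the crux, and the step I expect to be the main obstacle. If $\overline{\rho}_{E,p}$ is reducible, $E$ has a $K$-rational subgroup of order $p$, giving a non-cuspidal $K$-point on $X_0(p)$; if it lands in a Cartan normalizer, one gets a $K$-point on $X_{\mathrm{sp}}^{+}(p)$ or $X_{\mathrm{ns}}^{+}(p)$. One must show these modular curves acquire no such points once $p$ is large. Serre's own treatment runs through the Néron models of $E$ and a height estimate at the places of bad reduction to bound the resulting points uniformly in $p$; over $\QQ$ one may instead invoke Mazur's theorem on $X_0(p)$ together with the work of Bilu--Parent--Rebolledo on the Cartan cases. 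Assembling the three exclusions produces an explicit constant $C_{E,K}$ past which $\overline{\rho}_{E,p}(\GK)$ lies in no maximal subgroup of $\GL(2,\FF_p)$, i.e.\ equals $\GL(2,\FF_p)$, which is what we want.
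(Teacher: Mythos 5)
The paper offers no proof of this statement at all: it is quoted as Serre's own result (it is part of what Serre proved in \cite{serre1}, alongside the $\ell$-adic open image theorem stated just above it), so your task was really to supply an argument the paper never writes down. Your primary derivation, from the openness of the adelic image in $\GL\left(2,\widehat{\ZZ}\right)$, is essentially sound: an open subgroup contains the principal congruence subgroup of some level $m$, whose $p$-component is all of $\GL(2,\ZZ_{p})$ for every $p\nmid m$, and reducing mod $p$ gives surjectivity of $\overline{\rho}_{E,p}$ for such $p$. Two caveats. First, your claim that an open subgroup of index $n$ contains the kernel of reduction mod $n$ is not justified (index does not bound the level in general); either quote the level $m$ directly, or argue that for $p>n$ the image mod $p$ has index at most $n<p$, while every proper subgroup of $\GL(2,\ZZ/p\ZZ)$ has index at least $p$, which also yields the constant. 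Second, be aware that the logical order in Serre's work is the reverse of yours: mod-$p$ surjectivity for large $p$ is an ingredient in proving adelic openness, so your deduction is valid only because you are citing the stronger adelic theorem wholesale; also, your stated reason that the $\ell$-adic statement alone is insufficient is uniformity in $p$, not entanglement.

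Your fallback ``direct'' argument has a genuine gap as written. The corollary is over an arbitrary number field $\operatorname{K}$, but Mazur's theorem on $\operatorname{X}_{0}(p)$ and Bilu--Parent--Rebolledo concern $\QQ$-points only, and the latter treats the split Cartan normalizer, not the non-split one (which is precisely the case the paper lists as still open uniformly). For a fixed curve $E/\operatorname{K}$ these cases are handled by Serre's own non-uniform arguments (analysis of the isogeny character via class field theory and ramification at $p$, Shafarevich-type finiteness, and the local study of inertia you sketch for the exceptional case), not by uniform rational-point theorems on modular curves. So the direct route would need those fixed-curve arguments spelled out; as it stands, only your adelic argument (suitably repaired as above) proves the statement.
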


Serre went on to ask if the upper bound $C_{E,\operatorname{K}}$ is uniform among all elliptic curves $E$ defined over $\operatorname{K}$. In other words, is there a positive integer $C_{\operatorname{K}}$ (possibly dependent only on $\operatorname{K}$) such that if $E$ is a non-CM elliptic curve defined over $\operatorname{K}$, then $\overline{\rho}_{E,p}$ is surjective for all primes $p$ greater than $C_{\operatorname{K}}$? The consensus of the present day is yes when $\operatorname{K} = \QQ$ and that $C_{\QQ} = 37$. Much work has been done in the direction of answering this question and is treated on a case by case basis depending on the maximal subgroups of $\operatorname{GL}(2, \ZZ / p \ZZ)$. Let $E/\QQ$ be an elliptic curve and let $G := \overline{\rho}_{E,p}(G_{\QQ})$ for some prime $p$. Then there are five types of maximal subgroups of $\operatorname{GL}(2, \ZZ / p \ZZ)$.

\begin{enumerate}
    \item Maximal subgroups of $\operatorname{GL}(2, \ZZ / p \ZZ)$ that contain $\operatorname{SL}(2, \ZZ / p \ZZ)$.

By the fact that the determinant map
    $$\operatorname{Det} \colon \overline{\rho}_{E,p}(G_{\QQ}) \to \left(\ZZ / p \ZZ\right)^{\times}$$
    is surjective, if $G$ contains $\operatorname{SL}(2, \ZZ / p \ZZ)$, then $G = \operatorname{GL}(2, \ZZ / p \ZZ)$.

    \item Exceptional subgroups : maximal subgroups of $\operatorname{GL}(2, \ZZ / p \ZZ)$ whose reduction in $\operatorname{PGL}(E[p])$ is isomorphic to $\operatorname{A}_{4}$, $\operatorname{S}_{4}$, or $\operatorname{A}_{5}$.

    Serre himself proved that if $G$ is an exceptional subgroup, then there is a uniform upper bound $C$ such that if $p$ is a prime greater than $C$, then $G$ is not conjugate to an exceptional subgroup of $\operatorname{GL}(2, \ZZ / p \ZZ)$.

    \item Maximal subgroups that are Borel.

    Mazur in \cite{mazur1} proved that if $E/\QQ$ is a non-CM elliptic curve then $E$ does not have an isogeny of degree $p$ whenever $p$ is a prime greater than $37$.

    \item Maximal subgroups that are conjugate to the normalizer of the split Cartan group modulo $p$.

    Bilu--Parent in \cite{Bilu2008SerresUP} and Bilu--Parent--Rebolledo in \cite{Bilu2011RationalPO} proved that if $E/\QQ$ is a non-CM elliptic curve then there is a uniform upper bound $C$ such that if $p$ is a prime greater than $C$, then $G$ is not conjugate to the normalizer of the non-split Cartan group modulo $p$.

    \item Maximal subgroups that are conjugate to the normalizer of the non-split Cartan group modulo $p$.

    This is the final case that remains open. See \cite{Fourn2020ResidualGR} and \cite{Lemos2018SomeCO} for more work on this case.
\end{enumerate}

\subsection{Modular curves and elliptic curves}

Let $N$ be a positive integer and let $\operatorname{H}$ be a subgroup of $\operatorname{GL}(2, \ZZ / N \ZZ)$ such that the following three conditions are satisfied:

\begin{itemize}
    \item $\operatorname{-Id} \in \operatorname{H}$,

    \item $\operatorname{Det}(\operatorname{H}) = \left(\ZZ / N \ZZ\right)^{\times}$,

    \item $\operatorname{H}$ contains a matrix that is conjugate to $\begin{bmatrix}
        1 & 0 \\ 0 & -1
    \end{bmatrix}$ or $\begin{bmatrix}
        1 & 1 \\ 0 & -1
    \end{bmatrix}$.
\end{itemize}

Then there is a modular curve $\operatorname{X}_{\operatorname{H}}$ associated to $\operatorname{H}$. The third condition is necessary for this article because if $E/\QQ$ is an elliptic curve, then $\overline{\rho}_{E,N}(G_{\QQ})$ contains an element that behaves like complex conjugation (see Subsection 2.7 and Lemma 2.8 in \cite{SZ}).

Let $N$ be a positive integer. Let $\mathcal{F}_{N}$ denote the field of meromorphic functions of the Riemann surface $\operatorname{X}(N)$ whose $q$-expansions have coefficients in $K_{N} := \Q(\zeta_{N})$. For $f \in \mathcal{F}_{N}$ and $\gamma \in \operatorname{SL}(2, \Z)$, let $f \textbar_{\gamma} \in \mathcal{F}$ denote the modular function satisfying $f \textbar_{\gamma}(\tau) = f(\gamma \tau)$. For each $d \in (\Z / N \Z)^{\times}$, let $\sigma_{d}$ be the automorphism of $K_{N}$ such that $\sigma_{d}(\zeta_{N}) = \zeta_{N}^{d}$.

\begin{proposition}
The extension $\mathcal{F}_{N}$ of $\mathcal{F}_{1} = \Q(\textit{j})$ is Galois. There is a unique isomorphism
$$ \theta_{N} \colon \operatorname{GL}(2, \Z / N \Z) / \{\pm I\} \to \operatorname{Gal}(\mathcal{F}_{N} / \Q(\textit{j})) $$
such that the following hold for all $f \in \mathcal{F}_{N}$:

\begin{enumerate}

    \item for $g \in \operatorname{SL}(2, \Z / N \Z)$, we have $\theta_{N}(g)f = f \textbar_{\gamma^{t}}$ where $\gamma$ is any matrix in $\operatorname{SL}(2, \Z)$ that is congruent to $g$ modulo $N$.
    
    \item For $g = \left(\begin{array}{cc}
        1 & 0 \\
        0 & d
    \end{array}\right) \in \operatorname{GL}(2, \Z / N \Z)$, we have $\theta_{N}(g)f = \sigma_{d}(f)$.
\end{enumerate}
\end{proposition}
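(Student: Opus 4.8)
The plan is to follow the classical construction of the modular function field (as developed in Shimura, \emph{Introduction to the Arithmetic Theory of Automorphic Functions}, Ch.~6): exhibit explicit generators of $\mathcal{F}_N$, identify its field of constants, compute $[\mathcal{F}_N:\Q(j)]$, write down two families of automorphisms whose composites realize all of $\GL(2,\Z/N\Z)/\{\pm I\}$, and conclude by a degree count. (The cases $N\le 2$ being immediate, one may assume $N\ge 3$.)

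First I would recall the standard generators: over $\CC$ the function field $\CC(\operatorname{X}(N))$ is generated over $\CC(j)$ by the Fricke functions $f_a$, indexed by $a\in\bigl(\tfrac1N\Z^2/\Z^2\bigr)\setminus\{0\}$, which are suitably normalized Weierstrass $\wp$-values at $N$-torsion of the lattice $\Z\tau+\Z$; their $q$-expansions have coefficients in $\Q(\zeta_N)$, one has $\mathcal{F}_N=\Q\bigl(j,\zeta_N,\{f_a\}_a\bigr)$, and $\SL(2,\Z)$ permutes the $f_a$ through its (right) action on the index set, with $\Gamma(N)$ and $-I$ acting trivially on functions. Next I would pin down the field of constants of $\mathcal{F}_N$: it contains $\zeta_N$ --- realized, e.g., as the constant value of the Weil pairing of the two tautological $N$-torsion sections of the universal elliptic curve over $\operatorname{X}(N)$, or by an explicit Siegel-unit $q$-expansion computation --- and it is no larger, since a constant lying in $\mathcal{F}_N$ is in particular a $q$-expansion with coefficients in $\Q(\zeta_N)$. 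Thus $\Q(\zeta_N,j)\subseteq\mathcal{F}_N$ with $\Q(\zeta_N,j)/\Q(j)$ abelian of group $(\Z/N\Z)^\times$ via $d\mapsto\sigma_d$, and both $\Q(\zeta_N,j)$ and $\mathcal{F}_N$ are regular over $\Q(\zeta_N)$.

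Given this, the degree count is routine: base change to $\CC$ gives $[\mathcal{F}_N:\Q(\zeta_N,j)]=[\CC(\operatorname{X}(N)):\CC(j)]=[\PSL(2,\Z):\overline{\Gamma(N)}]=|\SL(2,\Z/N\Z)/\{\pm I\}|$, hence $[\mathcal{F}_N:\Q(j)]=|(\Z/N\Z)^\times|\cdot|\SL(2,\Z/N\Z)/\{\pm I\}|=|\GL(2,\Z/N\Z)/\{\pm I\}|$. For the automorphisms: (a) for $g\in\SL(2,\Z/N\Z)$, lift to $\gamma\in\SL(2,\Z)$; then $f\mapsto f|_{\gamma^t}$ is a well-defined automorphism of $\mathcal{F}_N$ --- independent of the lift since $\Gamma(N)$ and $-I$ act trivially, and preserving $\mathcal{F}_N$ since $\Gamma(N)\trianglelefteq\SL(2,\Z)$ and the $q$-expansions of the $f_a$ transform with coefficients still in $\Q(\zeta_N)$ --- it fixes $\Q(\zeta_N,j)$, and the resulting map $\SL(2,\Z/N\Z)/\{\pm I\}\to\Gal(\mathcal{F}_N/\Q(\zeta_N,j))$ is injective (the $f_a$ are permuted faithfully mod $\pm I$), hence an isomorphism by the degree count. (b) For $d\in(\Z/N\Z)^\times$, applying $\sigma_d$ to $q$-expansion coefficients is an automorphism of $\mathcal{F}_N$ fixing $\Q(j)$ and sending $\zeta_N\mapsto\zeta_N^d$; here the nontrivial point is that this operation really maps $\mathcal{F}_N$ into itself, i.e.\ the $q$-expansion principle giving $\mathcal{F}_N$ its $\Q$-structure, which one verifies on the $f_a$ (whose $\sigma_d$-twists are again Fricke functions). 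Since each element of $\GL(2,\Z/N\Z)$ factors uniquely as $g\cdot\smallmat{1}{0}{0}{d}$ with $\det=d$, I would set $\theta_N$ on such an element to be the appropriate composite of the maps in (a) and (b); checking that $\theta_N$ is a homomorphism is the cocycle identity relating the geometric $\SL_2$-action to the coefficient action, a $q$-expansion computation. Finally $\theta_N$ is injective on $\GL(2,\Z/N\Z)/\{\pm I\}$ (the $\SL_2$-part is, and post-composing with (b) moves $\zeta_N$), so it exhibits $[\mathcal{F}_N:\Q(j)]$ distinct $\Q(j)$-automorphisms; in characteristic $0$ this forces $\mathcal{F}_N/\Q(j)$ to be Galois with $\theta_N$ an isomorphism onto $\Gal(\mathcal{F}_N/\Q(j))$, while properties (1)--(2) determine $\theta_N$ on a generating set of $\GL(2,\Z/N\Z)$ and hence uniquely.

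The main obstacle is the pair of rationality inputs behind the constant-field step and step (b): that $\zeta_N$ already lies in $\mathcal{F}_N$, and that $\sigma_d$-twisting of $q$-expansions preserves $\mathcal{F}_N$. Both rest on the explicit theory of Fricke functions (or Siegel units) with computable $q$-expansions together with the $q$-expansion principle; once these are granted, the remaining work --- the degree computation, well-definedness of the $\SL_2$-action, and the homomorphism (cocycle) identity --- is bookkeeping, the only delicate point being to keep the transpose convention in (a) consistent so that $\theta_N$ comes out a homomorphism with the stated normalization.
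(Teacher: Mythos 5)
The paper gives no proof of this proposition: it is quoted as standard background (stated as in Section 2 of \cite{SZ}, and ultimately Shimura's Theorem 6.6), and the text proceeds directly to using it, so there is no in-paper argument to compare against. Your sketch is precisely that classical argument — Fricke-function generators, constant field $\Q(\zeta_N)$, degree count, the geometric $\SL(2,\Z/N\Z)$-action by slash operators together with $\sigma_d$ acting on $q$-expansion coefficients — and it is correct in outline provided one grants the standard Shimura inputs you yourself flag as the crux: that $\zeta_N\in\mathcal{F}_N$, that the paper's $\mathcal{F}_N$ (all functions on $\operatorname{X}(N)$ with $q$-expansions in $\Q(\zeta_N)$) coincides with $\Q(j,\zeta_N,\{f_a\}_a)$, and that $\sigma_d$-twisting of coefficients preserves $\mathcal{F}_N$, with the transpose convention in (1) handled consistently so that $\theta_N$ is a homomorphism.
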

A subgroup $G$ of $\operatorname{GL}(2, \Z / N \Z)$ containing $\operatorname{-Id}$ and with full determinant mod-$N$ acts on $\mathcal{F}_{N}$ by $g \cdot f = \theta_{N}(g)f$ for $g \in G$ and $f \in \mathcal{F}_{N}$. Let $\mathcal{F}_{N}^{G}$ denote the subfield of $\mathcal{F}_{N}$ fixed by the action of $G$. The modular curve $\operatorname{X}_{G}$ associated to $G$ is the smooth projective curve with function field $\mathcal{F}^{G}_{N}$. The inclusion of fields $\mathcal{F}_{1} = \Q(\textit{j}) \subseteq \mathcal{F}_{N}$ induces a non-constant morphism
$$ \pi_{G} \colon \operatorname{X}_{G} \longrightarrow \operatorname{Spec}\Q[\textit{j}] \cup \{\infty\} = \PP^{1}_{\Q} $$
of degree $[\operatorname{GL}(2, \Z / N \Z) : G]$. If there is an inclusion of groups, $G \subseteq G' \subseteq \operatorname{GL}(2, \Z / N \Z)$, then there is an inclusion of fields $\Q(\textit{j}) \subseteq \mathcal{F}_{N}^{G'} \subseteq \mathcal{F}_{N}^{G}$ which induces a non-constant morphism $\psi \colon X_{G} \longrightarrow X_{G'}$ of degree $[G' \colon G]$ such that the following diagram commutes.
$$\begin{tikzcd}
\operatorname{X}_{G} \arrow[dd, "\psi"'] \arrow[rrdd, "\pi_{G}"] &  &                \\
                                               &  &                \\
\operatorname{X}_{G'} \arrow[rr, "\pi_{G'}"']                 &  & \mathbb{P}^{1}
\end{tikzcd}$$
For an elliptic curve $E/ \Q$ with $\textit{j}(E) \neq 0, 1728$ the group $\overline{\rho}_{E,N}(G_{\Q})$ is conjugate in $\operatorname{GL}(2,\Z / N \Z)$ to a subgroup of $G$ if and only if $\textit{j}(E)$ is an element of
$\pi_{G}(\operatorname{X}_{G}(\Q))$.

\begin{lemma}\label{double cover}
Let $\operatorname{H} \subseteq \operatorname{H}'$ be arithmetically admissible Galois groups such that the modular curves $\operatorname{X}_{\operatorname{H}'}$ and $\operatorname{X}_{\operatorname{H}}$ generated by $\operatorname{H}'$ and $\operatorname{H}$ respectively are elliptic curves defined over $\QQ$. Suppose that there is a an isogeny
$$\phi \colon \operatorname{X}_{\operatorname{H}} \to \operatorname{X}_{\operatorname{H}'}$$
of prime degree. Let $\tau$ be the map that translates all elements of $\operatorname{X}_{\operatorname{H}'}$ by $\psi(\mathcal{O})$. Then either $\psi = \tau \circ \phi$ or $\psi = \tau \circ -\phi$.

\end{lemma}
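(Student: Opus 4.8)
The plan is to exploit the observation already sketched in Section~\ref{subsection structure and methodology}: although the modular map $\psi\colon\operatorname{X}_{\operatorname{H}}\to\operatorname{X}_{\operatorname{H}'}$ induced by the inclusion $\operatorname{H}\subseteq\operatorname{H}'$ need not send the origin to the origin, correcting it by a single translation turns it into an honest isogeny, which can then be compared with $\phi$. Recall from the discussion preceding the lemma that $\psi$ is a non-constant morphism defined over $\QQ$ of degree $[\operatorname{H}'\colon\operatorname{H}]$ sitting in the commutative triangle with the two maps to the $j$-line; in particular $\psi(\mathcal{O})\in\operatorname{X}_{\operatorname{H}'}(\QQ)$, so $\tau=\tau_{\psi(\mathcal{O})}$ and its inverse $\tau_{-\psi(\mathcal{O})}$ are automorphisms of $\operatorname{X}_{\operatorname{H}'}$ as a curve, defined over $\QQ$.

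First I would set $\alpha:=\tau_{-\psi(\mathcal{O})}\circ\psi$. Then $\alpha$ is a non-constant morphism of elliptic curves over $\QQ$ with $\alpha(\mathcal{O})=\tau_{-\psi(\mathcal{O})}\bigl(\psi(\mathcal{O})\bigr)=\psi(\mathcal{O})-\psi(\mathcal{O})=\mathcal{O}$, so by the standard fact that an origin-preserving morphism of elliptic curves is a group homomorphism (see e.g. Silverman, \emph{The Arithmetic of Elliptic Curves}, Theorem~III.4.8), $\alpha$ is an isogeny defined over $\QQ$. Since translations have degree $1$ and the degree is multiplicative, $\deg\alpha=\deg\psi=[\operatorname{H}'\colon\operatorname{H}]$, which in the situation of the lemma equals $\deg\phi$, a prime $p$. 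By construction $\psi=\tau\circ\alpha$, so it remains only to identify $\alpha$ with $\pm\phi$.

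For that step I would argue as follows. The curves $\operatorname{X}_{\operatorname{H}}$ and $\operatorname{X}_{\operatorname{H}'}$ are isogenous elliptic curves over $\QQ$, and --- this is where one uses that the modular curves occurring in the classification have no complex multiplication --- the group $\operatorname{Hom}(\operatorname{X}_{\operatorname{H}},\operatorname{X}_{\operatorname{H}'})$ is then free of rank $1$ over $\ZZ$: fixing any nonzero isogeny $\gamma$, the assignment $\beta\mapsto\widehat{\gamma}\circ\beta$ embeds $\operatorname{Hom}(\operatorname{X}_{\operatorname{H}},\operatorname{X}_{\operatorname{H}'})$ as a nonzero subgroup of $\operatorname{End}(\operatorname{X}_{\operatorname{H}})=\ZZ$. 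Choosing a generator $\gamma_{0}$ and writing $\phi=m\gamma_{0}$ and $\alpha=n\gamma_{0}$ with $m,n\in\ZZ$, multiplicativity and positivity of the degree give $m^{2}\deg\gamma_{0}=\deg\phi=p=\deg\alpha=n^{2}\deg\gamma_{0}$, hence $n^{2}=m^{2}$ and $\alpha=\pm\phi$. Therefore $\psi=\tau\circ\alpha$ equals $\tau\circ\phi$ or $\tau\circ(-\phi)$, as claimed. (Equivalently, one can phrase the last step via kernels: $\ker\alpha$ and $\ker\phi$ are cyclic subgroups of order $p$ in $\operatorname{X}_{\operatorname{H}}$, and once one knows they coincide one gets $\alpha=u\circ\phi$ for a curve automorphism $u$ of $\operatorname{X}_{\operatorname{H}'}$ fixing $\mathcal{O}$, so $u\in\{\pm1\}$ since $j_{\operatorname{X}_{\operatorname{H}'}}\neq0,1728$.)

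The only genuinely non-formal point, and the one I expect to be the main obstacle, is precisely this uniqueness of a degree-$p$ isogeny $\operatorname{X}_{\operatorname{H}}\to\operatorname{X}_{\operatorname{H}'}$ up to sign. It can fail when the curves admit complex multiplication: if $\operatorname{End}(\operatorname{X}_{\operatorname{H}})$ is an order in an imaginary quadratic field in which $p$ splits, there are degree-$p$ isogenies that are not negatives of one another (for instance $[1+i]$ and $[1-i]$ on a curve with CM by $\ZZ[i]$). Thus to make the proof airtight one must check that the elliptic curves $\operatorname{X}_{\operatorname{H}},\operatorname{X}_{\operatorname{H}'}$ appearing in the relevant cases are non-CM --- which can be read off from their LMFDB data --- whereupon $\operatorname{Hom}$ has rank $1$ and the comparison above goes through; everything else (the origin-preserving criterion and multiplicativity of degrees) is routine.
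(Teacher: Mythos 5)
Your argument is correct and follows essentially the same route as the paper's proof: translate by $-\psi(\mathcal{O})$ to obtain an origin-preserving morphism $\alpha=\tau_{-\psi(\mathcal{O})}\circ\psi$, invoke the fact that an origin-preserving morphism of elliptic curves is an isogeny, and then identify $\alpha$ with $\pm\phi$ by a degree count. The only divergence is in how that last identification is justified: the paper simply cites Silverman III.4.12 to write $\tau^{-1}\circ\psi=[M]\circ\phi$ and concludes $M=\pm1$ from the prime degree, whereas you argue via the rank of $\operatorname{Hom}(\operatorname{X}_{\operatorname{H}},\operatorname{X}_{\operatorname{H}'})$ and add a non-CM caveat. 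Your justification is in fact the more complete one, but note that the caveat is not needed in this setting: all the maps in play ($\psi$, $\tau$, and the isogeny $\phi$ of the lemma) are defined over $\QQ$, and for any elliptic curve $E/\QQ$ one has $\operatorname{End}_{\QQ}(E)=\ZZ$, since the CM order would live in an imaginary quadratic field not contained in $\QQ$. Your own dual-isogeny trick then already shows $\operatorname{Hom}_{\QQ}(\operatorname{X}_{\operatorname{H}},\operatorname{X}_{\operatorname{H}'})$ is free of rank $1$ without any hypothesis on CM: if $\alpha,\beta$ are $\QQ$-isogenies then $\widehat{\alpha}\circ\beta=n\in\ZZ$ forces $(\deg\alpha)\beta=n\alpha$, so any two are $\QQ$-linearly dependent. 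Thus the LMFDB non-CM check you propose is harmless but superfluous, and with that observation your proof is airtight and, if anything, fills in a step the paper leaves terse.
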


\begin{proof}

Let $\pi_{\operatorname{H}'}$ and $\pi_{\operatorname{H}}$ be the maps to the \textit{j}-line of $\operatorname{X}_{\operatorname{H}'}$ and $\operatorname{X}_{\operatorname{H}}$ respectively. Then there is a rational morphism $\psi \colon \operatorname{X}_{\operatorname{H}} \to \operatorname{X}_{\operatorname{H}'}$ of degree $2$ such that the following diagram commutes
$$\begin{tikzcd}
\operatorname{X}_{\operatorname{H}} \arrow[dd, "\psi"'] \arrow[rrdd, "\pi_{\operatorname{H}'}"] &  &                \\
                                               &  &                \\
\operatorname{X}_{\operatorname{H}'} \arrow[rr, "\pi_{\operatorname{H}'}"']                 &  & \mathbb{P}^{1}
\end{tikzcd}$$
Note that $\tau^{-1}$ is the map that translates all elements of $\operatorname{X}_{\operatorname{H}'}$ by $-\psi(\mathcal{O})$. Then $\tau^{-1} \circ \psi$ is a rational morphism mapping the identity of $\operatorname{X}_{\operatorname{H}}$ to the identity of $\operatorname{X}_{\operatorname{H}'}$. Hence, $\tau^{-1} \circ \psi$ is an isogeny of prime degree. By Proposition 4.12 in Chapter III, Section 4, of \cite{Silverman}, $\tau^{-1} \circ \psi = [M] \circ \phi$ where $[M]$ is the multiplication-by-$M$ map for some integer $M$. As the degree of $\tau^{-1} \circ \psi$ is prime, we have that $M = 1$ or $M = -1$. We conclude that $\psi = \tau \circ \phi$ or $\psi = \tau \circ -\phi$.

\end{proof}
\begin{theorem}[Faltings, 1983]\label{Faltings}
    Let $\mathcal{C}$ be an algebraic curve defined over $\QQ$. Let $g$ denote the genus of $\mathcal{C}$.
    \begin{itemize}
        \item If $g = 0$, then either $\mathcal{C}$ contains no points defined over $\QQ$ or $\mathcal{C}$ contains infinitely many points defined over $\QQ$. In the latter case, $\mathcal{C}$ is isomorphic to $\PP^{1}$.

        \item If $g = 1$, then either $\mathcal{C}$ contains no points defined over $\QQ$ or $\mathcal{C}$ contains points defined over $\QQ$. In the latter case, $\mathcal{C}$ is an elliptic curve and the set of points on $\mathcal{C}$ defined over $\QQ$ form a finitely-generated abelian group.

        \item If $g \geq 2$, then $\mathcal{C}$ has finitely many points defined over $\QQ$.
    \end{itemize}
\end{theorem}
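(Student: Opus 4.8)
The plan is to dispose of the three genus cases separately, since each is governed by a different classical structure theorem and they sit at very different depths. Throughout, $\mathcal{C}$ may be taken smooth and projective, and in the cases $g \in \{0,1\}$ one may assume $\mathcal{C}(\QQ) \neq \emptyset$, since otherwise the first alternative holds and there is nothing to prove.

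For $g = 0$, the idea is that $\mathcal{C}$ is a smooth plane conic: the anticanonical class has degree $2$ and is very ample, embedding $\mathcal{C}$ as a conic in $\PP^2_{\QQ}$. Given a rational point $P_0 \in \mathcal{C}(\QQ)$, projection away from $P_0$ — equivalently, the degree-one linear system $|P_0|$ — yields an isomorphism $\mathcal{C} \cong \PP^1_{\QQ}$ defined over $\QQ$. Since $\PP^1(\QQ)$ is infinite, so is $\mathcal{C}(\QQ)$, and the two alternatives (no rational points, or isomorphic to $\PP^1$ with infinitely many rational points) are exactly as stated. For $g = 1$, once $\mathcal{C}$ has a rational point $\mathcal{O}$ the pair $(\mathcal{C},\mathcal{O})$ is by definition an elliptic curve over $\QQ$, so the assertion reduces to the Mordell--Weil theorem: $\mathcal{C}(\QQ)$ is a finitely generated abelian group. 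The standard route is the weak Mordell--Weil theorem (finiteness of $\mathcal{C}(\QQ)/m\mathcal{C}(\QQ)$ via Galois cohomology, using finiteness of the class group and of the unit group of the relevant number field) combined with the theory of canonical heights and a descent argument.

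For $g \geq 2$, the statement is precisely the Mordell conjecture, and here the only available proof is Faltings' theorem; this is by far the main obstacle, since it requires the full force of Faltings' work (or a later proof such as Vojta's or Bombieri's) and cannot be reached by elementary arguments. For the purposes of the present paper, however, all three cases are invoked as black boxes: the $g=0$ and $g=1$ statements are classical, and the $g \geq 2$ statement is quoted directly from Faltings.
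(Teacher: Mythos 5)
Your proposal is correct and matches the paper's treatment: the paper states this result as a quoted classical theorem with no proof, and your outline correctly attributes each case to its standard source (Riemann--Roch/conic argument for $g=0$, Mordell--Weil for $g=1$, and Faltings' resolution of the Mordell conjecture for $g\geq 2$), all of which are legitimately invoked as black boxes here.
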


\begin{thm}[Mazur \cite{mazur1}]\label{thm-mazur}
Let $E/\QQ$ be an elliptic curve. Then
\[
E(\Q)_\tor\simeq
\begin{cases}
\Z/M\Z &\text{with}\ 1\leq M\leq 10\ \text{or}\ M=12,\ \text{or}\\
\Z/2\Z\oplus \Z/2M\Z &\text{with}\ 1\leq M\leq 4.
\end{cases}
\]
\end{thm}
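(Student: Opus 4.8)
The plan is to follow Mazur's strategy: recast "$E/\QQ$ has a prescribed rational torsion subgroup" as "a certain modular curve has a non-cuspidal rational point," and then eliminate the bad curves. By the criterion recalled just before Definition~\ref{Def 1}, an elliptic curve $E/\QQ$ with $j_E \neq 0,1728$ has a rational point of exact order $N$ precisely when $j(E)$ lies in the image of the set of non-cuspidal rational points of the modular curve $X_1(N)$, and a subgroup $\ZZ/2\ZZ \oplus \ZZ/2M\ZZ$ of $E(\QQ)$ corresponds to a non-cuspidal rational point on the modular curve $X_1(2,2M)$ parametrizing full $2$-torsion together with a point of order $2M$. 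Since a rational point of order $N$ forces a rational point of order $d$ for every $d \mid N$, it is enough to show: $X_1(N)(\QQ)$ consists only of cusps for the "minimal" forbidden levels $N \in \{11,14,15,16,18,20,21,24,25,27,35,49\}$ and for every prime $N \geq 13$; and $X_1(2,2M)(\QQ)$ consists only of cusps for $M = 5$ and $M = 6$. Every other disallowed torsion group then follows by this divisibility reduction (for instance $\ZZ/2\ZZ \oplus \ZZ/2M\ZZ$ with $M \geq 7$ already contains a point of order $2M \geq 14$).

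The cases of genus at most $1$ I would dispose of directly. For $N \in \{1,\dots,10,12\}$ the curve $X_1(N)$ has genus $0$ and is $\PP^1_\QQ$, and likewise $X_1(2,2M) \cong \PP^1_\QQ$ for $M \leq 4$; these account for exactly the groups in the statement, each realized by the infinitely many non-cuspidal rational points, giving the "if" part. For $N \in \{11,14,15\}$ and for $X_1(2,10)$ the curve has genus $1$; by Theorem~\ref{Faltings} it is an elliptic curve over $\QQ$, and a $2$-descent (or the known models --- for instance $X_1(11)$ is an elliptic curve of rank $0$ with torsion $\ZZ/5\ZZ$) shows it has rank $0$, whereupon one enumerates the finitely many rational points and checks each is a cusp.

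The remaining forbidden levels have genus $\geq 2$, so Theorem~\ref{Faltings} gives finiteness of the rational points, but one must still show they are all cusps, and this is the heart of the matter. For the composite levels $N \in \{16,18,20,21,24,25,27,35,49\}$ and the level $X_1(2,12)$ one proceeds by explicit computation on the (now hyperelliptic or higher-genus) modular curve --- a Chabauty--Coleman argument bounding the rational points and identifying them with cusps, or the classical modular-unit arguments going back to Kubert and Mazur. The genuinely deep input is the prime case $N = p$ with $p \geq 11$: Mazur's analysis of the Eisenstein ideal in the Hecke algebra acting on $J_0(p)$ shows the Eisenstein quotient has Mordell--Weil rank $0$ over $\QQ$, and a reduction argument at a suitable auxiliary prime then forces $X_1(p)(\QQ)$ to consist only of cusps for all primes $p \geq 11$ (the small primes $p = 11, 13$ are also accessible by hand, via $X_1(11)$ of genus $1$ and $X_1(13)$ of genus $2$, and the sporadic isogeny primes $17,19,37,43,67,163$ are finished by checking that the relevant isogeny kernels carry a non-trivial Galois character). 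This Eisenstein-ideal step is the main obstacle; it is precisely why the theorem is quoted rather than reproved here, and nothing in the elementary toolkit of the present paper substitutes for it.

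Assembling the pieces: the genus-$0$ curves exhibit every group in the statement as an actual torsion subgroup, and the exclusions above show no other finite abelian group occurs, which is exactly the dichotomy $E(\QQ)_{\tor} \simeq \ZZ/M\ZZ$ with $1 \leq M \leq 10$ or $M = 12$, or $\ZZ/2\ZZ \oplus \ZZ/2M\ZZ$ with $1 \leq M \leq 4$.
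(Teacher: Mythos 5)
The paper does not prove this statement at all: it is imported verbatim from Mazur, with the citation \cite{mazur1} serving as the entire ``proof.'' So there is nothing internal to compare your argument against; the only question is whether your write-up itself constitutes a proof. It does not, and you say so yourself: the decisive ingredients --- that $X_1(p)(\QQ)$ consists of cusps for every prime $p\geq 11$ (Mazur's Eisenstein-ideal/rank-zero-quotient argument together with the formal-immersion step), and that the composite-level curves $X_1(N)$ and $X_1(2,2M)$ in your list have no non-cuspidal rational points --- are named and deferred rather than established. What you have written is an accurate architectural sketch of how the theorem is proved in the literature (the reduction to a finite list of ``minimal'' forbidden levels via divisibility is correct, as is the genus-$0$ realization of the fifteen allowed groups and the rank-$0$ treatment of $X_1(11)$, $X_1(14)$, $X_1(15)$, $X_1(2,10)$), but as a proof it has a genuine gap exactly where the theorem is deep; since the paper itself only cites Mazur, deferring to him is the appropriate treatment, and your sketch should be presented as such rather than as a proof.

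Two smaller inaccuracies in the sketch are worth flagging. First, the historical/technical attribution for the composite levels: these were settled by Kubert-, Ligozat- and Mazur-style arguments (rank-zero quotients, modular units, formal immersions), not by Chabauty--Coleman, which postdates this work; also $X_1(2,12)$ is a genus-$1$ curve of rank $0$ (like $X_1(2,10)$), not a hyperelliptic or higher-genus curve needing such machinery. Second, your remark about the ``sporadic isogeny primes $17,19,37,43,67,163$'' conflates the rational-isogeny problem (non-cuspidal rational points on $X_0(p)$, where those sporadic points live) with the torsion problem at hand: for rational points of prime order one works with $X_1(p)$, which has no non-cuspidal rational points for any prime $p\geq 11$, with no sporadic exceptions to dispose of.
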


\subsection{Hilbert's Irreducibility Theorem} \label{HIT}

\begin{theorem}[Hilbert]

Let $f_{1}(X_{1}, \ldots, X_{r}, Y_{1}, \ldots, Y_{s}), \ldots, f_{n}(X_{1}, \ldots, X_{r}, Y_{1}, \ldots, Y_{s})$ be irreducible polynomials in the ring $\Q(X_{1}, \ldots, X_{r})[Y_{1}, \ldots, Y_{s}]$. Then there exists an $r$-tuple of rational numbers $(a_{1}, \ldots, a_{r})$ such that $f_{1}(a_{1}, \ldots, a_{r}, Y_{1}, \ldots, Y_{s}), \ldots, f_{n}(a_{1}, \ldots, a_{r}, Y_{1}, \ldots, Y_{s})$ are irreducible in the ring $\Q[Y_{1}, \ldots, Y_{s}]$.

\end{theorem}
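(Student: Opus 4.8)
The plan is to recast Hilbert's theorem in the language of \emph{thin sets} and reduce it to a single analytic estimate in one parameter. Call a set $A\subseteq\QQ^{r}$ thin if it is contained in a finite union of sets of the form $V(\QQ)$ for a proper Zariski-closed $V\subsetneq\mathbb{A}^{r}_{\QQ}$, or $\pi(W(\QQ))$ for a dominant, generically finite $\QQ$-morphism $\pi\colon W\to\mathbb{A}^{r}_{\QQ}$ of degree at least $2$ with $W$ an irreducible $\QQ$-variety; finite unions and subsets of thin sets are thin. The theorem then follows from two assertions: \textbf{(A)} $\QQ^{r}$ is not thin; and \textbf{(B)} the set of $(a_{1},\dots,a_{r})\in\QQ^{r}$ for which some $f_{j}(a_{1},\dots,a_{r},Y_{1},\dots,Y_{s})$ is reducible in $\QQ[Y_{1},\dots,Y_{s}]$ (or has smaller total degree than the generic specialization) is thin. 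Any tuple in $\QQ^{r}$ outside this thin set then does the job.

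I would prove \textbf{(B)} one $f_{j}$ at a time. After clearing the content in $Y_{1},\dots,Y_{s}$, take $f:=f_{j}\in\QQ[X_{1},\dots,X_{r},Y_{1},\dots,Y_{s}]$ primitive and irreducible, and let $Z\subset\mathbb{A}^{r+s}$ be the irreducible $\QQ$-hypersurface $f=0$ with projection $p\colon Z\to\mathbb{A}^{r}$ onto the $X$-coordinates. If $Z$ is geometrically irreducible, Bertini--Noether shows $f(a,Y)$ stays (geometrically, hence rationally) irreducible for $a$ outside a proper closed subvariety, a set of the first kind. If $Z$ is only $\QQ$-irreducible, its geometric components are transitively permuted by $\Gal(\overline{\QQ}/\QQ)$; passing to the splitting field of $\QQ(X_{1},\dots,X_{r})(Y_{1},\dots,Y_{s})$ over $\QQ(X_{1},\dots,X_{r})$ and the resolvent covers attached to proper, non-Galois-stable subsets of the roots, reducibility of $f(a,Y)$ over $\QQ$ forces $a$ into the image of one of finitely many covers of degree $\ge 2$, sets of the second kind. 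A finite union over $j$ is still thin.

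For \textbf{(A)} I would induct on $r$, the case $r=1$ being the crux. A degree-$\ge 2$ cover of $\mathbb{A}^{1}_{\QQ}$ corresponds to a separable extension $F/\QQ(X)$ with $F=\QQ(X)[Y]/(g)$, where $g\in\QQ[X,Y]$ is irreducible, separable in $Y$, and monic in $Y$ of degree $d\ge 2$ after a substitution clearing denominators. Expand the $d$ roots of $g$ as Puiseux series $\alpha_{i}(X)$ in $X^{-1/e_{i}}$ near $X=\infty$. Since $g$ is irreducible the Galois/monodromy group acts transitively on the $\alpha_{i}$, and because $d\ge 2$ at least one branch genuinely ramifies ($e_{i}\ge 2$) or at least is not the germ of a $\QQ$-rational function --- otherwise $g$ would split off a $\QQ(X)$-rational linear factor, contradicting transitivity with $d\ge 2$. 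I would then invoke the classical counting estimate that such a branch takes values in $\QQ$ at only $o(B)$ integers $t$ with $1\le t\le B$ (in fact $O_{\varepsilon}(B^{1/2+\varepsilon})$, provable by bounding the denominators of successive values via a Vandermonde determinant, or by a Bombieri--Pila count on the associated curve). As a rational root of $g(t,Y)$ must equal some $\alpha_{i}(t)$, the image $\pi(W(\QQ))$ meets $\{1,\dots,B\}$ in $o(B)$ points, so a finite union of such images together with the finitely many points from first-kind sets cannot exhaust $\ZZ\cap[1,B]$ for $B$ large; hence $\QQ$ (even $\ZZ$) is not thin. For $r\to r+1$: restrict each second-kind cover $\pi_{k}\colon W_{k}\to\mathbb{A}^{r+1}$ to the fibres of the last coordinate; for all but a thin set of values $c$ of that coordinate (avoiding the branch locus and, by the $r=1$ case, the images of any ``vertical'' covers) the fibred maps are again genuine degree-$\ge 2$ covers of irreducible $\QQ$-varieties, so the inductive hypothesis supplies a rational point of $\mathbb{A}^{r}$ avoiding all their images and the relevant proper subvarieties, and this lifts to the desired point of $\mathbb{A}^{r+1}$.

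The hard part will be the base case of \textbf{(A)}: the estimate that a genuinely ramified (or non-rational) algebraic power series assumes rational values at $o(B)$ integer arguments up to $B$, and, logically upstream of it, the extraction from the irreducibility of $g$ over $\QQ(X)$ of the existence of such a branch --- which is exactly where transitivity of the Galois group is indispensable. The remaining ingredients --- clearing contents, the Bertini--Noether and resolvent packaging in \textbf{(B)}, the Puiseux expansions, and the fibration step in the induction --- I expect to be routine by comparison.
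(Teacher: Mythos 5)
The paper does not prove this statement at all: it is quoted as the classical theorem of Hilbert and used as a black box, with the surrounding discussion (Section \ref{HIT}) only importing Serre's thin-set formalism from \cite{MR2363329} to deduce Proposition \ref{HIT proposition}. So there is no ``paper proof'' to compare against; what you have written is a sketch of the standard modern proof, essentially Serre's: reduce to the two assertions that the bad specialization locus is thin and that $\QQ^{r}$ is not thin, prove the latter by induction on $r$ with the one-variable case resting on a counting bound for rational (after making $g$ monic, integral) values of non-rational Puiseux branches, obtained by finite-difference/Vandermonde denominator estimates or Bombieri--Pila. That route is correct in outline and is in fact the framework the paper itself leans on implicitly when it invokes thin sets. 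Two points deserve more care than your sketch gives them. First, in assertion (B) your resolvent-cover argument for $\QQ$-irreducible but not geometrically irreducible $f_{j}$ speaks of ``subsets of the roots,'' which only makes literal sense for one variable $Y$; for $s\geq 2$ you need the classical reduction of multivariate irreducibility to the one-variable case (a Kronecker-type substitution, or the standard lemma that HIT for one $Y$ implies it for several), since reducibility of $f_{j}(a,Y_{1},\dots,Y_{s})$ is not detected by roots. Second, the base case of (A) is, as you say, the crux, and your dichotomy ``some branch ramifies or is not a $\QQ$-rational germ'' must be paired with the monicity reduction so that rational roots of $g(t,Y)$ at integer $t$ are integers with controlled height; once that is in place the $O_{\varepsilon}(B^{1/2+\varepsilon})$ count does finish the argument. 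With those two repairs your proposal is a complete and standard proof, just not one the paper attempts.
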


In Section $7$ of \cite{SZ} there is the following statement:

Let $G$ be a subgroup of $\operatorname{GL}\left(2, \widehat{\ZZ}\right)$ of prime-power level such that the modular curve, $\operatorname{X}_{G}$ generated by $G$ is a curve of genus $0$. Define the set
$$\mathcal{S}_{G} := \bigcup_{G'} \pi_{G',G}(X_{G'}(\Q))$$
where $G'$ varies over the proper subgroups of $G$ that are conjugate to one of the groups that define a modular curve of prime-power level with infinitely many non-cuspidal, $\Q$-rational points from \cite{SZ} and $\pi_{G',G} \colon \operatorname{X}_{G'} \to \operatorname{X}_{G}$ is the natural morphism induced by the inclusion $G' \subseteq G$.

Then $\operatorname{X}_{G} \cong \PP^{1}$ and $\mathcal{S}_{G}$ is a \textit{thin} subset (see Chapter 3 of \cite{MR2363329} for the definition and properties of \textit{thin} sets) of $\operatorname{X}_{G}(\Q)$ in the language of Serre. The field $\Q$ is Hilbertian and in particular, $\PP^{1}(\Q) \cong \operatorname{X}_{G}(\Q)$ is not thin; this implies that $\operatorname{X}_{G}(\Q) \setminus \mathcal{S}_{G}$ cannot be thin and is infinite.

\begin{proposition} \label{HIT proposition}
Let $N$ be the power of a prime number. Let $\operatorname{H}$ be a subgroup of $\operatorname{GL}(2, \ZZ / N \ZZ)$ such that
\begin{enumerate}
    \item $\operatorname{H}$ contains $\operatorname{-Id}$,
    \item $\operatorname{H}$ contains a representative of complex conjugation mod-$N$,
    \item $\operatorname{H}$ has full determinant modulo $N$.
\end{enumerate}
If the modular curve $\operatorname{X}_{\operatorname{H}}$ defined by $\operatorname{H}$ is a genus $0$ curve that contains infinitely many non-cuspidal, $\QQ$-rational points, then there are infinitely many \textit{j}-invariants corresponding to elliptic curves over $\QQ$ such that the image of the mod-$N$ Galois representation is conjugate to $\operatorname{H}$ itself (not a proper subgroup of $\operatorname{H}$).
\end{proposition}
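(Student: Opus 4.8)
The plan is to extract everything from the Sutherland--Zywina statement recalled immediately above (Section 7 of \cite{SZ}), applied with $G = \operatorname{H}$. First note that the three hypotheses on $\operatorname{H}$ are precisely what is needed for the modular curve $\operatorname{X}_{\operatorname{H}}$ to be defined, and that since $\operatorname{X}_{\operatorname{H}}$ has genus $0$ with infinitely many non-cuspidal rational points, Theorem \ref{Faltings} (or the quoted statement itself) gives $\operatorname{X}_{\operatorname{H}} \cong \PP^{1}$. The quoted result then produces a \emph{thin} set $\mathcal{S}_{\operatorname{H}} = \bigcup_{\operatorname{H}'}\pi_{\operatorname{H}',\operatorname{H}}\big(\operatorname{X}_{\operatorname{H}'}(\QQ)\big)$, the union taken over the proper subgroups $\operatorname{H}'$ of $\operatorname{H}$ that are conjugate to one of the Sutherland--Zywina groups of prime-power level whose modular curve has infinitely many non-cuspidal rational points, together with the conclusion that $\operatorname{X}_{\operatorname{H}}(\QQ)\setminus\mathcal{S}_{\operatorname{H}}$ is infinite. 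Deleting the finitely many cusps and the finitely many points lying over $\textit{j}=0$ and $\textit{j}=1728$, I obtain an infinite set $T\subseteq \operatorname{X}_{\operatorname{H}}(\QQ)\setminus\mathcal{S}_{\operatorname{H}}$ of non-cuspidal points $P$, where I write $\textit{j}(P):=\pi_{\operatorname{H}}(P)\neq 0,1728$.

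Next I would analyze an individual $P\in T$. Fix an elliptic curve $E/\QQ$ with $\textit{j}(E)=\textit{j}(P)$ and put $G_{E}=\overline{\rho}_{E,N}(G_{\QQ})\subseteq\operatorname{GL}(2,\ZZ/N\ZZ)$; by the standard description of the $\QQ$-points in the fiber of $\pi_{\operatorname{H}}$ over a non-cuspidal $\textit{j}$-invariant $\neq 0,1728$, the point $P$ corresponds to a coset $g_{P}\operatorname{H}$ with $g_{P}^{-1}G_{E}g_{P}\subseteq\operatorname{H}$, and I set $\operatorname{H}_{P}:=\langle g_{P}^{-1}G_{E}g_{P},\operatorname{-Id}\rangle\subseteq\operatorname{H}$. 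If $\operatorname{H}_{P}=\operatorname{H}$, then either $g_{P}^{-1}G_{E}g_{P}=\operatorname{H}$ already, or $g_{P}^{-1}G_{E}g_{P}$ has index $2$ in $\operatorname{H}$ and misses $\operatorname{-Id}$; in the latter case a quadratic twist $E^{(d)}$ with $\QQ(\sqrt{d})\not\subseteq\QQ(E[N])$ has mod-$N$ image conjugate to $\langle g_{P}^{-1}G_{E}g_{P},\operatorname{-Id}\rangle=\operatorname{H}$, because twisting multiplies $\overline{\rho}_{E,N}$ by the quadratic character of $\QQ(\sqrt{d})$, and linear disjointness of that field from $\QQ(E[N])$ forces the new image to fill out the subgroup generated by $G_{E}$ and $\operatorname{-Id}$. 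Either way there is an elliptic curve over $\QQ$ with $\textit{j}$-invariant $\textit{j}(P)$ whose mod-$N$ image is conjugate to $\operatorname{H}$; call such a $P$ \emph{good}. Thus every $P\in T$ is good or has $\operatorname{H}_{P}$ a proper subgroup of $\operatorname{H}$.

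Since $T$ is infinite and $\pi_{\operatorname{H}}$ is finite-to-one, if infinitely many $P\in T$ are good then infinitely many values of $\textit{j}(P)$ occur, and the proposition follows. Otherwise infinitely many $P\in T$ have $\operatorname{H}_{P}\subsetneq\operatorname{H}$; I claim this cannot happen. As $\operatorname{H}$ has only finitely many subgroups, some fixed proper subgroup $\operatorname{H}_{0}\subsetneq\operatorname{H}$ with $\operatorname{-Id}\in\operatorname{H}_{0}$ equals $\operatorname{H}_{P}$ for infinitely many $P$. For each such $P$, the inclusion $g_{P}^{-1}G_{E}g_{P}\subseteq\operatorname{H}_{0}$ means $P$ lifts along $\pi_{\operatorname{H}_{0},\operatorname{H}}$ to a non-cuspidal rational point of $\operatorname{X}_{\operatorname{H}_{0}}$ over $\textit{j}(P)$; since $\textit{j}(P)$ takes infinitely many values, $\operatorname{X}_{\operatorname{H}_{0}}$ has infinitely many non-cuspidal rational points. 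Moreover $\operatorname{H}_{0}$ has prime-power level dividing $N$, contains $\operatorname{-Id}$, has full determinant, and contains a representative of complex conjugation (all inherited from $G_{E}$), so $\operatorname{H}_{0}$ is one of the Sutherland--Zywina groups and hence $\pi_{\operatorname{H}_{0},\operatorname{H}}(\operatorname{X}_{\operatorname{H}_{0}}(\QQ))\subseteq\mathcal{S}_{\operatorname{H}}$. But then $P\in\mathcal{S}_{\operatorname{H}}$, contradicting $P\in T$. Hence infinitely many $P$ are good, completing the proof.

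The main obstacle is the bookkeeping in the second and third paragraphs: because $\operatorname{-Id}\in\operatorname{H}$, a rational point of $\operatorname{X}_{\operatorname{H}}$ above a given $\textit{j}$-invariant determines the mod-$N$ image only up to the $\pm\operatorname{Id}$-ambiguity introduced by quadratic twisting, so one must verify carefully that the lift of $P$ to $\operatorname{X}_{\operatorname{H}_{0}}$ is compatible with the particular coset attached to $P$ and that the twisting step really yields image equal to $\operatorname{H}$ rather than some intermediate group; the pigeonhole over the finitely many subgroups of $\operatorname{H}$ is what upgrades a single rational point on each candidate $\operatorname{X}_{\operatorname{H}_{0}}$ to the infinitude of points required before one may invoke the completeness of the Sutherland--Zywina classification of prime-power level.
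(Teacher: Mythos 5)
Your argument is correct and takes essentially the same route as the paper: the paper's own justification (the remark following the proposition) simply invokes the thin-set discussion of Section 7 of \cite{SZ} and the Hilbertianity of $\QQ$ to conclude that $\operatorname{X}_{\operatorname{H}}(\QQ)\setminus\mathcal{S}_{\operatorname{H}}$ is infinite, exactly as you do. The extra bookkeeping you supply --- the quadratic-twist step for images missing $\operatorname{-Id}$ and the pigeonhole argument disposing of proper subgroups whose modular curves have only finitely many rational points --- is a correct fleshing-out of details the paper leaves implicit.
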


\begin{remark}\label{Hilbert's Irreducibility Theorem}
The proof of Proposition \ref{HIT proposition} follows directly from the discussion in Section $7$ of \cite{SZ}. In the case that $\operatorname{X}_{G}$ is genus $0$ and contains infinitely many non-cuspidal $\Q$-rational points, the set $\operatorname{X}_{G}(\QQ) \setminus \mathcal{S}_{G}$ is infinite. Hence, the set of non-cuspidal, $\Q$-rational points on $\operatorname{X}_{G}$ that are not on the modular curves defined by any proper subgroup of $G$ is infinite.
\end{remark}
The statement of Proposition \ref{HIT proposition} can be extended to all positive integers $N$ and to the following corollary.

\begin{corollary}\label{HIT corollary}
    Let $N$ be a positive integer and let $\operatorname{H}$ be an arithmetically admissible group of level $N$. Suppose that the genus of the modular curve $\operatorname{X}_{\operatorname{H}}$, generated by $\operatorname{H}$ is equal to $0$. Then either $\operatorname{X}_{\operatorname{H}}$ has no points defined over $\QQ$ or $\operatorname{X}_{\operatorname{H}}$ has a point defined over $\QQ$ that corresponds to an elliptic curve $E/\QQ$ such that $\overline{\rho}_{E,N}(G_{\QQ})$ is conjugate to $\operatorname{H}$.
\end{corollary}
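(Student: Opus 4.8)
The plan is to reduce, via Theorem~\ref{Faltings} in the genus-$0$ case, to the situation $\operatorname{X}_{\operatorname{H}}\cong\PP^{1}_{\QQ}$ and then run the thin-set argument of Sutherland--Zywina from Section~7 of \cite{SZ}, the only change for composite $N$ being that the relevant proper subgroups are taken to be \emph{all} maximal proper subgroups of $\operatorname{H}$ containing $\operatorname{-Id}$ with full determinant, rather than a prescribed list. So first: if $\operatorname{X}_{\operatorname{H}}(\QQ)=\emptyset$ we are done; otherwise Theorem~\ref{Faltings} gives $\operatorname{X}_{\operatorname{H}}\cong\PP^{1}_{\QQ}$, so $\operatorname{X}_{\operatorname{H}}(\QQ)\cong\PP^{1}(\QQ)$ is infinite and, $\QQ$ being Hilbertian, is not a thin set (Chapter~3 of \cite{MR2363329}).

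Assume henceforth $\operatorname{X}_{\operatorname{H}}\cong\PP^{1}_{\QQ}$. Let $\operatorname{G}_{1},\dots,\operatorname{G}_{k}$ be the finitely many maximal proper subgroups of $\operatorname{H}$ with $\operatorname{-Id}\in\operatorname{G}_{i}$ and $\operatorname{Det}(\operatorname{G}_{i})=(\ZZ/N\ZZ)^{\times}$. Full determinant makes each $\operatorname{X}_{\operatorname{G}_{i}}$ a geometrically irreducible smooth projective curve over $\QQ$, and the natural map $\pi_{\operatorname{G}_{i},\operatorname{H}}\colon\operatorname{X}_{\operatorname{G}_{i}}\to\operatorname{X}_{\operatorname{H}}$ has degree $[\operatorname{H}:\operatorname{G}_{i}]\geq 2$, so $\mathcal{S}_{i}:=\pi_{\operatorname{G}_{i},\operatorname{H}}(\operatorname{X}_{\operatorname{G}_{i}}(\QQ))$ is a thin subset of $\operatorname{X}_{\operatorname{H}}(\QQ)$. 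Since the $\QQ$-rational cusps of $\operatorname{X}_{\operatorname{H}}$ and the set $\pi_{\operatorname{H}}^{-1}(\{0,1728\})\cap\operatorname{X}_{\operatorname{H}}(\QQ)$ are finite, the union of all these sets is thin, and we may pick a non-cuspidal $P\in\operatorname{X}_{\operatorname{H}}(\QQ)$ with $\pi_{\operatorname{H}}(P)\notin\{0,1728\}$ that lies in no $\mathcal{S}_{i}$.

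Now I would read an elliptic curve off $P$. Put $j_{0}=\pi_{\operatorname{H}}(P)\notin\{0,1728,\infty\}$ and fix any $E_{0}/\QQ$ with $j(E_{0})=j_{0}$; the moduli description of $\operatorname{X}_{\operatorname{H}}$ over $j_{0}$ identifies $P$ with a coset $\operatorname{H}g$ whose $\QQ$-rationality is equivalent to $\Gamma:=g\,\overline{\rho}_{E_{0},N}(G_{\QQ})\,g^{-1}\subseteq\operatorname{H}$, so $\Gamma$ is a conjugate of $\overline{\rho}_{E_{0},N}(G_{\QQ})$ pinned down by $P$. If $\langle\Gamma,\operatorname{-Id}\rangle$ were a proper subgroup of $\operatorname{H}$ it would lie in some $\operatorname{G}_{i}$ (full determinant being automatic, as $\operatorname{Det}(\overline{\rho}_{E_{0},N}(G_{\QQ}))=(\ZZ/N\ZZ)^{\times}$), forcing $\Gamma\subseteq\operatorname{G}_{i}$ and hence $P=\operatorname{H}g=\pi_{\operatorname{G}_{i},\operatorname{H}}(\operatorname{G}_{i}g)\in\mathcal{S}_{i}$, a contradiction. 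Therefore $\langle\Gamma,\operatorname{-Id}\rangle=\operatorname{H}$: either $\Gamma=\operatorname{H}$, so $E_{0}$ has mod-$N$ image conjugate to $\operatorname{H}$, or $[\operatorname{H}:\Gamma]=2$ with $\operatorname{-Id}\notin\Gamma$, and then a quadratic twist $E_{0}^{(d)}$ has $\overline{\rho}_{E_{0}^{(d)},N}(G_{\QQ})$ conjugate to $\langle\Gamma,\operatorname{-Id}\rangle=\operatorname{H}$ by the remarks on quadratic twists in Section~\ref{section background}. Either way an elliptic curve over $\QQ$ with the stated property exists, which extends Proposition~\ref{HIT proposition} past prime-power level.

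The step I expect to be the main obstacle is exactly this last passage from ``$P$ avoids every $\mathcal{S}_{i}$'' to ``the image attached to $P$ equals $\operatorname{H}$'': one must work with the actual coset $\operatorname{H}g$ representing $P$ rather than with $j_{0}$ alone, let the $\operatorname{G}_{i}$ range over all maximal proper subgroups of full determinant as \emph{subgroups} (not conjugacy classes), and absorb the $\operatorname{-Id}$ ambiguity into a quadratic twist. The ancillary facts---that there are only finitely many such $\operatorname{G}_{i}$, that full determinant is forced for any maximal proper subgroup containing the image of an elliptic curve over $\QQ$, and that full determinant yields geometric irreducibility of $\operatorname{X}_{\operatorname{G}_{i}}$---are precisely what allow one to dispense with the prime-power database formulation of \cite{SZ}. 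The cases $N\leq 2$ are immediate and may be handled separately.
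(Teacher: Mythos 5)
Your argument is correct, and it is in spirit the same Hilbert-irreducibility/thin-set argument from Section 7 of \cite{SZ} that the paper invokes; the difference is that the paper never actually writes a proof of this corollary (it states Proposition \ref{HIT proposition} for prime-power level, points to the \cite{SZ} discussion in which $\mathcal{S}_{G}$ is built from the database subgroups with infinitely many rational points, and then simply asserts that the statement ``can be extended'' to all $N$), whereas you supply the missing details in a way that is self-contained for composite level. Concretely, you replace the database-defined $\mathcal{S}_{G}$ by the union of $\pi_{\operatorname{G}_{i},\operatorname{H}}(\operatorname{X}_{\operatorname{G}_{i}}(\QQ))$ over \emph{all} maximal proper subgroups $\operatorname{G}_{i}$ of $\operatorname{H}$ containing $\operatorname{-Id}$ with full determinant (finitely many, each giving a thin set since full determinant gives geometric irreducibility and the covering degree is at least $2$), you argue at the level of the coset $\operatorname{H}g$ representing the chosen rational point rather than at the level of the $j$-invariant alone, and you absorb the $\operatorname{-Id}$ ambiguity by a quadratic twist exactly as in the paper's remarks on twists. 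What this buys is independence from the prime-power classification of \cite{SZ}: the paper's formulation implicitly leans on knowing which proper subgroups have infinitely many rational points, while your version needs only finiteness of the subgroup lattice and surjectivity of the mod-$N$ cyclotomic character, so the extension to arbitrary $N$ is genuinely proved rather than asserted. The only caveats are bookkeeping ones you already flag: the identification of rational points in the fiber over $j_{0}\neq 0,1728,\infty$ with cosets $\operatorname{H}g$ satisfying $g\,\overline{\rho}_{E_{0},N}(G_{\QQ})\,g^{-1}\subseteq\operatorname{H}$ depends on the standard moduli description (and on $\operatorname{-Id}\in\operatorname{H}$ to kill the automorphism/twist ambiguity), and the trivial levels $N\leq 2$ are handled directly.
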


\begin{corollary}\label{genus 1 corollary}
    Let $\operatorname{H}$ be a curious Galois group and let $\operatorname{X}_{\operatorname{H}}$ be the modular curve generated by $\operatorname{H}$. Then $\operatorname{X}_{\operatorname{H}}$ is an elliptic curve of positive rank.
\end{corollary}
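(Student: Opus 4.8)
The plan is to run through the trichotomy on the genus of $\operatorname{X}_{\operatorname{H}}$ supplied by Faltings (Theorem~\ref{Faltings}), eliminating every case except ``elliptic curve of positive rank.'' First I would record the two inputs furnished by Definition~\ref{Def 2}: since $\operatorname{H}$ is a curious Galois group, $\operatorname{X}_{\operatorname{H}}(\QQ)$ is infinite, and there is no elliptic curve $E/\QQ$ with $\overline{\rho}_{E,N}(G_{\QQ})$ conjugate to $\operatorname{H}$. I would also note that, being a curious Galois group, $\operatorname{H}$ is by definition arithmetically admissible, so Corollary~\ref{HIT corollary} is applicable to $\operatorname{X}_{\operatorname{H}}$.

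Because $\operatorname{X}_{\operatorname{H}}(\QQ)$ is infinite, the third bullet of Theorem~\ref{Faltings} rules out genus $\geq 2$, so the genus of $\operatorname{X}_{\operatorname{H}}$ is $0$ or $1$. To discard genus $0$: in that case $\operatorname{X}_{\operatorname{H}}$ has infinitely many, hence at least one, rational point, so Corollary~\ref{HIT corollary} produces an elliptic curve $E/\QQ$ with $\overline{\rho}_{E,N}(G_{\QQ})$ conjugate to $\operatorname{H}$, contradicting condition~(2) of Definition~\ref{Def 2}. Therefore the genus of $\operatorname{X}_{\operatorname{H}}$ is exactly $1$.

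Finally, since the genus is $1$ and $\operatorname{X}_{\operatorname{H}}$ has a rational point, the second bullet of Theorem~\ref{Faltings} tells us that $\operatorname{X}_{\operatorname{H}}$ is an elliptic curve and that $\operatorname{X}_{\operatorname{H}}(\QQ)$ is a finitely generated abelian group; as this group is infinite (and its torsion subgroup is finite, e.g. by Mazur's Theorem~\ref{thm-mazur}), its free part is nontrivial, i.e.\ $\operatorname{X}_{\operatorname{H}}$ has positive rank. Combining the three steps gives the claim.

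The only real ``obstacle'' here is bookkeeping rather than mathematics: one must make sure the hypotheses of Corollary~\ref{HIT corollary} are in force, which is automatic because the notion of a curious Galois group is defined only for arithmetically admissible $\operatorname{H}$, and one must invoke the correct cases of Faltings' theorem in the right order. No further analytic or Diophantine input is needed beyond the results already established in the excerpt.
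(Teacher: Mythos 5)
Your argument is correct and matches the paper's own reasoning (sketched in its methodology section and left implicit for this corollary): Faltings' trichotomy eliminates genus $\geq 2$, Corollary~\ref{HIT corollary} eliminates genus $0$ using condition~(2) of Definition~\ref{Def 2}, and an infinite finitely generated Mordell--Weil group forces positive rank. The appeal to Mazur's theorem is unnecessary—finite generation alone already gives positive rank from infinitude—but this is harmless.
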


For more information on modular curves, see Section 2 of \cite{SZ}. 

\section{Galois images of elliptic curves defined over $\QQ$}\label{section Galois images of elliptic curves defined over QQ}

In \cite{Rouse}, the authors classified the image of the $2$-adic Galois representation attached to non-CM elliptic curves defined over $\QQ$. 

\begin{theorem}[Rouse, Zureick-Brown, Theorem 1.1 in \cite{Rouse}]\label{RZB}

Let $\operatorname{H}$ be an open subgroup of $\operatorname{GL}(2, \ZZ_{2})$ and let $E/\QQ$ be an elliptic curve such that $\operatorname{\rho}_{E,2^{\infty}}(G_{\QQ})$ is conjugate to a subgroup of $\operatorname{H}$. Then one of the following holds:

\begin{itemize}
    \item The modular curve $\operatorname{X}_{\operatorname{H}}$ has infinitely many rational points.

    \item The curve $E$ has complex multiplication.

    \item The \textit{j}-invariant of $E$ is one of the following exception \textit{j}-invariants
    $$\left\{2^{11}, 2^{4} \cdot 17^{3}, \frac{4097^{3}}{2^{4}}, \frac{257^{3}}{2^{8}}, -\frac{857985^{3}}{62^{8}}, \frac{919425^{3}}{496^{4}}, -\frac{3 \cdot 18249920^{3}}{17^{16}}, -\frac{7 \cdot 1723187806080^{3}}{79^{16}}\right\}.$$
\end{itemize}
\end{theorem}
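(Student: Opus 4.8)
The plan is to reduce to the \emph{actual} $2$-adic image and then run a Rouse--Zureick-Brown style tree search over subgroups of $\GL(2,\ZZ_2)$. If $E$ has complex multiplication or $j(E)$ is one of the eight listed values we are in the second or third alternative, so assume neither and set $H_E := \rho_{E,2^\infty}(G_\QQ)$ (replacing it by $\langle H_E,-I\rangle$ if necessary, which alters neither the associated modular curve nor the $j$-invariants that occur on it). By hypothesis $H_E$ is conjugate into $H$, so $H_E \subseteq H$ induces a finite, $\QQ$-rational morphism $X_{H_E}\to X_H$; a finite morphism of curves has finite fibres, so $X_H(\QQ)$ finite would force $X_{H_E}(\QQ)$ finite, and conversely $X_{H_E}(\QQ)$ infinite forces $X_H(\QQ)$ infinite. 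Thus it suffices to prove: for $E$ non-CM with $j(E)$ not in the list, $X_{H_E}(\QQ)$ is infinite. By Serre's open image theorem $H_E$ is open in $\GL(2,\ZZ_2)$, hence of finite level $2^k$, so the question becomes: which admissible subgroups of $\GL(2,\ZZ_2)$ — those containing $-I$, with full determinant onto $\ZZ_2^\times$, and containing a representative of complex conjugation (automatic for any image $\rho_{E,2^k}(G_\QQ)$) — have a modular curve with infinitely many rational points?

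First I would organise the admissible subgroups into a rooted tree, with root $\GL(2,\ZZ/2\ZZ)$ (whose modular curve is the $j$-line $\PP^1$) and with the children of a node $H$ its maximal admissible subgroups, viewed inside $\GL(2,\ZZ_2)$. For each node $H$ of level $2^k$ I would compute the genus of $X_H$ by Riemann--Hurwitz applied to $X_H\to X(1)$, reading the degree off the index $[\GL(2,\ZZ/2^k\ZZ):H]$, the cusps off the orbits of $\smallmat{1}{1}{0}{1}$ on the cosets of $H$, and the elliptic points off the orbits of the order-$2$ and order-$3$ conjugacy classes. Traversing from the root I would prune as follows: a node with genus $\geq 2$ has $X_H(\QQ)$ finite by Faltings (Theorem~\ref{Faltings}) and is terminal, since any subgroup of $H$ again gives a curve with finitely many rational points; a genus-$0$ node with no rational point is a dead branch, because no elliptic curve has $2$-adic image inside it; a genus-$0$ node with a rational point is $\PP^1$ and has infinitely many rational points (and by Hilbert's irreducibility theorem, Proposition~\ref{HIT proposition} and Corollary~\ref{HIT corollary}, such images are genuinely realised), so I recurse into its children; and a genus-$1$ node requires the Mordell--Weil rank of $X_H$, with positive rank giving infinitely many rational points (recurse) and rank $0$ making $X_H(\QQ)$ finite — bounded by Mazur's torsion theorem (Theorem~\ref{thm-mazur}) — and terminal.

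Two inputs then close the argument. The first is that the tree has only finitely many non-terminal nodes: the levels of $2$-adic images whose modular curve has infinitely many rational points are bounded (by $32$), which follows from an explicit group-theoretic estimate showing that beyond a fixed level every admissible subgroup has genus $\geq 2$. Consequently there are finitely many terminal nodes; moreover any $H_E$ with $X_{H_E}(\QQ)$ finite is contained in some terminal node $N$ (take any maximal chain from $H_E$ up to the root; the passage from an infinite to a finite rational-point set occurs at a node which, having a rational point, is genus $\geq 2$ or genus $1$ of rank $0$, hence terminal), so that $j(E)\in \pi_{H_E}(X_{H_E}(\QQ))\subseteq \pi_N(X_N(\QQ))$. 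The second is that for each terminal node $N$ one can compute $X_N(\QQ)$ exactly: genus-$1$ rank-$0$ curves by a torsion computation; genus $\geq 2$ curves by Chabauty--Coleman together with a Mordell--Weil sieve, by elliptic Chabauty after a suitable base change, or by producing dominant maps to curves of smaller genus whose rational points are already known (including quotient curves and quadratic twists of covers). Reading off $\bigcup_N \pi_N(X_N(\QQ))$ and discarding the cusps, the CM $j$-invariants (among them $0$ and $1728$), and those $j$-values that already lie on a node with infinitely many rational points, one is left with exactly the eight exceptional $j$-invariants.

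The principal obstacle is this last stage, together with the termination bound. The real work is the complete determination of rational points on the genus $\geq 2$ modular curves: Chabauty-type methods apply at once only when the Mordell--Weil rank is strictly less than the genus, and several of the curves force one to pass to covers, to twists, or to explicit descent; and one must still prove the uniform genus lower bound that truncates the tree at level $32$, which is a substantive computation inside the subgroup lattice of $\GL(2,\ZZ_2)$ rather than a soft argument.
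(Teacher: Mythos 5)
This statement is not proved in the paper at all: it is imported verbatim as Theorem 1.1 of Rouse--Zureick-Brown \cite{Rouse}, so the only fair comparison is with the proof in that reference, and your outline does follow its architecture (prune the lattice of admissible open subgroups of $\operatorname{GL}(2,\ZZ_{2})$ by genus and rank, then determine rational points on the finitely many obstructing curves). As a proof, however, your proposal has a genuine gap, and it is exactly the part you yourself flag as ``the principal obstacle'': everything that makes the theorem's conclusion true is deferred rather than established. The level/genus truncation (that beyond level $32$ every admissible subgroup has genus at least $2$) is asserted without argument; the rank determinations at the genus-$1$ nodes are assumed to be available; and, most importantly, the complete and provably correct determination of $X_{N}(\QQ)$ for every terminal node of genus at least $2$ --- the step from which the eight exceptional $j$-invariants are actually read off --- is only described as ``Chabauty--Coleman, Mordell--Weil sieve, elliptic Chabauty, covers and twists,'' with no verification that these methods succeed for the specific curves that arise (they need not apply when the rank is too large, and in \cite{Rouse} several curves require ad hoc arguments). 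Since the theorem's content is precisely that explicit finite list of $j$-invariants, a proof that stops short of producing and certifying that list has not proved the statement; what you have is a correct roadmap of the Rouse--Zureick-Brown computation, not a proof of it.

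A smaller point worth tightening: your reduction step replaces $H_{E}$ by $\left\langle H_{E}, \operatorname{-Id}\right\rangle$ and then works with the tree of admissible subgroups; that is fine, but you should say explicitly that the chain you follow from $H_{E}$ to the root stays inside admissible subgroups (so that the ``first node where the rational points become infinite'' is actually a node of your tree), and note that finiteness of $X_{H}(\QQ)$ propagates downward simply because the covering maps are finite morphisms defined over $\QQ$ --- Mazur's theorem is not needed for the rank-$0$ genus-$1$ case, only Mordell--Weil finiteness of torsion.
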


\begin{corollary}[Rouse, Zureick-Brown, Corollary 1.3 in \cite{Rouse}]
Let $E/\QQ$ be an elliptic curve without complex multiplication. Then the index of $\rho_{E,2^{\infty}}(G_{\QQ})$ in $\operatorname{GL}(2, \ZZ_{2})$ divides $64$ or $96$; and all such indices occur. Moreover, $\rho_{E,2^{\infty}}(G_{\QQ})$ is the inverse image of $\overline{\rho}_{E,32}(G_{\QQ})$. Moreover, there are $1208$ possible images of $\rho_{E,2^{\infty}}$. 
\end{corollary}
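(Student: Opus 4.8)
The plan is to derive this corollary from Theorem \ref{RZB}, which does the deep work; the corollary is then a finite classification together with the extraction of its numerical consequences. First I would fix the standing setup: $\rho_{E,2^\infty}(G_{\QQ})$ is conjugate to a subgroup of an open $\operatorname{H} \le \operatorname{GL}(2,\ZZ_2)$ precisely when $j(E) \in \pi_{\operatorname{H}}(\operatorname{X}_{\operatorname{H}}(\QQ))$. By Theorem \ref{RZB}, if $E/\QQ$ is non-CM and $j(E)$ is not one of the eight exceptional $j$-invariants, then the minimal such group—namely $\operatorname{H} = \rho_{E,2^\infty}(G_{\QQ})$ itself—has $\operatorname{X}_{\operatorname{H}}(\QQ)$ infinite. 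Conversely, a group $\operatorname{H}$ can occur as an image only if $\operatorname{X}_{\operatorname{H}}$ carries a non-cuspidal rational point that produces the full image. Thus the set of possible $2$-adic images is controlled by the open subgroups whose modular curves have infinitely many rational points, together with the finitely many curves attached to the exceptional $j$-invariants.

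Second, to establish the assertion that $\rho_{E,2^\infty}(G_{\QQ})$ is the inverse image of $\overline{\rho}_{E,32}(G_{\QQ})$—equivalently, that every image has level dividing $32$—I would show that no open subgroup $\operatorname{H}$ of level $\ge 64$ arises. Such $\operatorname{H}$ have large index, so $\pi_{\operatorname{H}} \colon \operatorname{X}_{\operatorname{H}} \to \PP^1$ has large degree and $\operatorname{X}_{\operatorname{H}}$ has genus $\ge 2$; by Faltings (Theorem \ref{Faltings}), $\operatorname{X}_{\operatorname{H}}(\QQ)$ is then finite, so by Theorem \ref{RZB} any non-CM curve landing in such $\operatorname{H}$ has exceptional or CM $j$-invariant. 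Checking the eight exceptional $j$-invariants directly (the CM locus being excluded by hypothesis) confirms that their $2$-adic images already have level dividing $32$. This reduces the entire problem to the finite lattice $\mathcal{L}$ of open subgroups of level dividing $32$ that contain $-I$, have full determinant, and contain a complex-conjugation element.

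Third, within $\mathcal{L}$ I would decide, group by group, whether $\operatorname{H}$ is realized as a full image. If $g(\operatorname{X}_{\operatorname{H}}) = 0$, then Corollary \ref{HIT corollary} produces infinitely many $E/\QQ$ with image exactly $\operatorname{H}$, so $\operatorname{H}$ occurs. If $g(\operatorname{X}_{\operatorname{H}}) = 1$, I would compute the Mordell--Weil group $\operatorname{X}_{\operatorname{H}}(\QQ)$ and, using Lemma \ref{double cover}, determine whether some non-cuspidal rational point fails to factor through every proper admissible subgroup of $\operatorname{H}$; the group occurs precisely when such a point exists, which is typical in positive rank but can fail through a ``curious'' obstruction and so must be checked. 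If $g(\operatorname{X}_{\operatorname{H}}) \ge 2$, then $\operatorname{X}_{\operatorname{H}}(\QQ)$ is finite by Faltings and $\operatorname{H}$ occurs only if one of these finitely many points gives the full image, verified directly. Collecting the realized groups yields the complete list of $2$-adic images; counting them gives $1208$, and for each realized $\operatorname{H}$ one has $[\operatorname{GL}(2,\ZZ_2) : \operatorname{H}] = \deg \pi_{\operatorname{H}}$, which one reads off and checks to range exactly over the divisors of $64$ together with the divisors of $96$, each occurring for at least one group.

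The main obstacle is the genus-$1$ case in the third step: for the positive-rank modular curves one must compute Mordell--Weil groups explicitly and then, via the isogeny decomposition of Lemma \ref{double cover}, separate the rational points that lift to proper subgroups (from isogenies or entanglements) from those giving the full image $\operatorname{H}$—exactly the phenomenon isolated by Corollary \ref{genus 1 corollary}, where a rational point can fail to produce a curve with image precisely $\operatorname{H}$. Establishing the level bound of the second step (that level $\ge 64$ forces genus $\ge 2$) and pinning down the exact count of $1208$ are the remaining laborious points, but both are finite verifications over $\mathcal{L}$ once Theorem \ref{RZB} is in hand.
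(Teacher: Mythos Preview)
The paper does not prove this corollary at all: it is quoted verbatim as Corollary 1.3 of \cite{Rouse} and treated as input, just like Theorem \ref{RZB} itself. There is therefore no ``paper's own proof'' to compare against; the corollary is a summary of the computational classification carried out in \cite{Rouse}, and the present paper simply cites it.

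That said, your outline has two genuine gaps relative to what the actual argument in \cite{Rouse} requires. First, your level bound step is not valid as stated: ``level $\ge 64$ implies large index implies genus $\ge 2$'' is false in general, since there exist congruence subgroups of arbitrarily high level with genus $0$ or $1$. The bound in \cite{Rouse} comes from an explicit recursive enumeration of the subgroup lattice, computing genera and rational points level by level until no admissible subgroup of the next level has $\operatorname{X}_{\operatorname{H}}(\QQ)$ infinite; it is not a soft consequence of index growth. Second, you restrict your lattice $\mathcal{L}$ to subgroups containing $-\operatorname{Id}$, but the image $\rho_{E,2^\infty}(G_{\QQ})$ need not contain $-\operatorname{Id}$, and indeed many of the $1208$ images do not. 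The enumeration must therefore also run over the index-$2$ subgroups of each $\operatorname{H}$ that omit $-\operatorname{Id}$ and determine which of those are realized (via the twisting argument discussed around Remark 1.1.3 of \cite{Rouse2021elladicIO} and in the quadratic-twist subsection of this paper). Without this, you cannot reach the count of $1208$.
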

Similarly, in \cite{al-rCMGRs}, Lozano-Robledo classified the image of the $2$-adic Galois representation attached to elliptic curves defined over $\QQ$ with complex multiplication.

In \cite{SZ}, the authors classified the modular curves of prime-power level that contain infinitely many rational points. Their classification comes in the form of tables of groups in the appendix of \cite{SZ} and includes all of the groups from \cite{Rouse} that generate modular curves of $2$-power level with infinitely many points defined over $\QQ$. Note that the levels of the groups in \cite{SZ} are powers of primes less than or equal to $13$.

\section{Product groups}\label{section product groups}

Let $p_{1}, \ldots, p_{n}$ be distinct primes and let $M_{1}, \ldots,M_{n} \geq 2$ be integers such that $M_{i}$ is a power of $p_{i}$ for all $i \in \left\{1, \ldots, n\right\}$. Then we will call subgroups of $\operatorname{GL}(2, \ZZ / M_{1} \cdots M_{n} \ZZ) \cong \operatorname{GL}(2, \ZZ / M_{1} \ZZ) \times \ldots \times \operatorname{GL}(2, \ZZ / M_{n} \ZZ)$ that arise as direct products of proper subgroups of $\operatorname{GL}(2, \ZZ / M_{i} \ZZ)$ for $i \in \left\{1, \ldots, n\right\}$ as product groups.

\begin{lemma}\label{lemma CRT}
    Let $M, N \geq 2$ be coprime integers. Then $\ZZ / MN \ZZ \cong \ZZ / M \ZZ \times \ZZ / N \ZZ$
\end{lemma}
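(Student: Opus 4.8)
The plan is to write down an explicit map and recognize it as the Chinese remainder isomorphism. First I would define
$$\varphi \colon \ZZ / MN \ZZ \longrightarrow \ZZ / M \ZZ \times \ZZ / N \ZZ, \qquad a + MN\ZZ \longmapsto \left(a + M\ZZ,\ a + N\ZZ\right),$$
and check that it is well defined: if $a \equiv b \pmod{MN}$ then $MN \mid a-b$, hence $M \mid a-b$ and $N \mid a-b$, so the two coordinates do not depend on the choice of representative. Since reduction modulo $M$ and reduction modulo $N$ are each ring homomorphisms, their product $\varphi$ is a ring homomorphism, and in particular a homomorphism of additive groups.

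Second, I would compute $\ker \varphi$. If $\varphi(a + MN\ZZ) = (0, 0)$ then $M \mid a$ and $N \mid a$, so $\lcm(M, N) \mid a$. This is the one place the hypothesis $\gcd(M,N) = 1$ enters: it forces $\lcm(M,N) = MN$, so $MN \mid a$ and therefore $a + MN\ZZ = 0$. Hence $\varphi$ is injective. Finally, the source and target are both finite of cardinality $MN$, so an injective map between them is automatically a bijection; thus $\varphi$ is a bijective ring homomorphism, i.e.\ an isomorphism, which restricts in particular to the claimed isomorphism of additive groups.

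There is no genuine obstacle here, as the statement is elementary; the only thing to be careful about is which form of coprimality to invoke. As an alternative to the cardinality count one may fix a B\'ezout relation $uM + vN = 1$ and verify directly that $(x + M\ZZ,\ y + N\ZZ) \mapsto uMy + vNx + MN\ZZ$ is a two-sided inverse of $\varphi$; this produces the isomorphism with no finiteness argument and also exhibits the inverse explicitly, which is occasionally convenient in later computations with product groups.
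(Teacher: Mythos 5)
Your proof is correct and takes essentially the same route as the paper: both define the reduction map $x \mapsto (x \bmod M,\ x \bmod N)$, the paper simply asserting it is an isomorphism while you supply the standard verification (well-definedness, trivial kernel via coprimality, and a cardinality count). Nothing further is needed.
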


\begin{proof}
    Let $\phi_{MN} \colon \ZZ / MN \ZZ \to \ZZ / M \ZZ \times \ZZ / N \ZZ$ such that $\phi_{MN}(x) = (\phi_{M}(x), \phi_{N}(x))$ where $\phi_{M}(x)$ is the reduction of $x$ modulo $M$ and $\phi_{N}(x)$ is the reduction of $x$ modulo $N$. Then $\phi$ is an isomorphism.
\end{proof}

\begin{corollary} \label{corollary CRT}
    Let $M, N \geq 2$ be coprime integers. Then $\operatorname{GL}(2, \ZZ / MN \ZZ) \cong \operatorname{GL}(2, \ZZ / M \ZZ) \times \operatorname{GL}(2, \ZZ / N \ZZ)$.
\end{corollary}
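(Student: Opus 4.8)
The plan is to deduce the statement from Lemma \ref{lemma CRT} by upgrading the isomorphism there from an isomorphism of additive groups to one of rings, and then passing first to $2 \times 2$ matrices and finally to unit groups.

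First I would observe that the map $\phi_{MN} \colon \ZZ / MN \ZZ \to \ZZ / M \ZZ \times \ZZ / N \ZZ$ of Lemma \ref{lemma CRT} is in fact an isomorphism of commutative unital rings, since reduction modulo $M$ and reduction modulo $N$ are each ring homomorphisms and $\phi_{MN}$ is their product. Applying $\phi_{MN}$ entrywise to a $2 \times 2$ matrix then gives a map $\Phi \colon \operatorname{M}_2(\ZZ / MN \ZZ) \to \operatorname{M}_2(\ZZ / M \ZZ) \times \operatorname{M}_2(\ZZ / N \ZZ)$, and because $\phi_{MN}$ preserves both addition and multiplication, $\Phi$ is a ring homomorphism; it is bijective because $\phi_{MN}$ is, and it sends the identity matrix to the pair of identity matrices. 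Hence $\Phi$ is a ring isomorphism.

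Next I would pass to unit groups. For any ring $R$ one has $\operatorname{GL}(2, R) = \operatorname{M}_2(R)^{\times}$; a ring isomorphism restricts to an isomorphism of unit groups; and the unit group of a product ring $R_1 \times R_2$ is $R_1^{\times} \times R_2^{\times}$. Stringing these facts together with $\Phi$ yields
$$\operatorname{GL}(2, \ZZ / MN \ZZ) = \operatorname{M}_2(\ZZ / MN \ZZ)^{\times} \cong \left( \operatorname{M}_2(\ZZ / M \ZZ) \times \operatorname{M}_2(\ZZ / N \ZZ) \right)^{\times} \cong \operatorname{GL}(2, \ZZ / M \ZZ) \times \operatorname{GL}(2, \ZZ / N \ZZ).$$
Equivalently, and more concretely, a matrix over $\ZZ / MN \ZZ$ is invertible precisely when its determinant is a unit, which by Lemma \ref{lemma CRT} holds precisely when its reductions modulo $M$ and modulo $N$ each have unit determinant, i.e.\ lie in $\operatorname{GL}(2, \ZZ / M \ZZ)$ and $\operatorname{GL}(2, \ZZ / N \ZZ)$ respectively.

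There is no genuine obstacle in this argument; the only point that requires a moment's care is that Lemma \ref{lemma CRT} must be invoked as a statement about \emph{rings} (so that matrix multiplication, and hence the determinant, transports correctly), not merely about additive groups as it is literally phrased there. I would also remark that, by an obvious induction on the number of factors, the same reasoning gives $\operatorname{GL}(2, \ZZ / M_1 \cdots M_n \ZZ) \cong \operatorname{GL}(2, \ZZ / M_1 \ZZ) \times \cdots \times \operatorname{GL}(2, \ZZ / M_n \ZZ)$ whenever $M_1, \ldots, M_n$ are pairwise coprime, which is the form in which the statement is used for product groups in the sequel.
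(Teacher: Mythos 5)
Your proof is correct and takes essentially the same route as the paper: both rest on the entrywise reduction map $A \mapsto (A \bmod M, A \bmod N)$ and Lemma \ref{lemma CRT}, the paper checking trivial kernel and surjectivity of this map on $\operatorname{GL}(2,\ZZ/MN\ZZ)$ directly, while you package the identical facts as a ring isomorphism $\operatorname{M}_2(\ZZ/MN\ZZ) \cong \operatorname{M}_2(\ZZ/M\ZZ) \times \operatorname{M}_2(\ZZ/N\ZZ)$ restricted to unit groups. Your formulation has the minor advantage of making automatic the invertibility of the CRT lift, a point the paper asserts with less detail.
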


\begin{proof}
    Let $\psi_{MN} \colon \operatorname{GL}(2, \ZZ / MN \ZZ) \to \operatorname{GL}(2, \ZZ / M \ZZ) \times \operatorname{GL}(2, \ZZ / N \ZZ)$ be the map that maps $A \in \operatorname{GL}(2, \ZZ / MN \ZZ)$ to $(\psi_{M}(A),\psi_{N}(A))$ where $\psi_{M}(A)$ is the reduction of the matrix $A \in \operatorname{GL}(2, \ZZ / MN \ZZ)$ modulo $M$ and $\psi_{N}(A)$ is the reduction of $A$ modulo $N$. Then $\psi_{MN}$ is a homomorphism. Let $A = \begin{bmatrix}
        a & b \\ c & d
    \end{bmatrix} \in \operatorname{Ker}(\psi_{MN})$. Then the reduction of $b$ and $c$ modulo $M$ and $N$ are both $0$. By Lemma \ref{lemma CRT}, both $b$ and $c$ are congruent to $0$ modulo $MN$. Moreover, the reduction of $a$ and $d$ modulo $M$ and $N$ are both $1$. By Lemma $\ref{lemma CRT}$, both $a$ and $d$ are congruent to $1$ modulo $MN$.

    Let $\begin{bmatrix}
        r_{1} & r_{2} \\ r_{3} & r_{4}
    \end{bmatrix} \in \operatorname{GL}(2, \ZZ / M \ZZ)$ and let $\begin{bmatrix}
        s_{1} & s_{2} \\ s_{3} & s_{4}
    \end{bmatrix} \in \operatorname{GL}(2, \ZZ / N \ZZ)$. We need to find a matrix $\begin{bmatrix}
        a & b \\ c & d
    \end{bmatrix} \in \operatorname{GL}(2, \ZZ / MN \ZZ)$ such that $\psi_{MN}\left(\begin{bmatrix}
        a & b \\ c & d
    \end{bmatrix}\right) = \left(\begin{bmatrix}
        r_{1} & r_{2} \\ r_{3} & r_{4}
    \end{bmatrix}, \begin{bmatrix}
        s_{1} & s_{2} \\ s_{3} & s_{4}
    \end{bmatrix}\right)$. In other words, we need to find elements $(a,b,c,d) \in \ZZ / MN \ZZ$ such that the reduction of $(a,b,c,d)$ modulo $M$ is equal to $(r_{1},r_{2},r_{3},r_{4})$ and the reduction of $(a,b,c,d)$ modulo $N$ is equal to $(s_{1},s_{2},s_{3},s_{4})$. This follows from Lemma \ref{lemma CRT}. Finally, the matrix $\begin{bmatrix}
        a & b \\ c & d
    \end{bmatrix}$ is invertible because $\psi_{M}\left(\begin{bmatrix}
        a & b \\ c & d
    \end{bmatrix}\right)$ and $\psi_{N}\left(\begin{bmatrix}
        a & b \\ c & d
    \end{bmatrix}\right)$ are both invertible.
\end{proof}

\begin{lemma}\label{lemma lifting}
    Let $M, N \geq 2$ be co-prime integers. Let $A \in \operatorname{GL}(2, \ZZ / M \ZZ)$. Then there is a matrix $\widehat{A} \in \operatorname{GL}(2, \ZZ / MN \ZZ)$ which reduces modulo $M$ to $A$.
\end{lemma}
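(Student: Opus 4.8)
The plan is to reduce the statement to the Chinese remainder theorem, which has already been packaged for us in Corollary \ref{corollary CRT}. Recall that that corollary produces an isomorphism $\psi_{MN} \colon \operatorname{GL}(2, \ZZ / MN \ZZ) \to \operatorname{GL}(2, \ZZ / M \ZZ) \times \operatorname{GL}(2, \ZZ / N \ZZ)$ whose first component $\psi_M$ is reduction modulo $M$. So I would simply set $\widehat{A} := \psi_{MN}^{-1}(A, I_N)$, where $I_N$ denotes the identity matrix in $\operatorname{GL}(2, \ZZ / N \ZZ)$. Since $\psi_{MN}$ is an isomorphism, $\widehat{A}$ is a well-defined element of $\operatorname{GL}(2, \ZZ / MN \ZZ)$, and by construction $\psi_M(\widehat{A}) = A$, i.e.\ $\widehat{A}$ reduces modulo $M$ to $A$, which is exactly what is required.

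Alternatively --- and this is essentially the unwinding of the previous paragraph --- I could argue entirely at the level of entries. Write $A = \smallmat{a}{b}{c}{d}$. By Lemma \ref{lemma CRT} each of the four residues $a, b, c, d \in \ZZ / M \ZZ$ lifts to an element of $\ZZ / MN \ZZ$ with any prescribed reduction modulo $N$; choose lifts $\widehat{a}, \widehat{d}$ that reduce to $1$ modulo $N$ and lifts $\widehat{b}, \widehat{c}$ that reduce to $0$ modulo $N$, and set $\widehat{A} = \smallmat{\widehat{a}}{\widehat{b}}{\widehat{c}}{\widehat{d}}$. Then $\widehat{A}$ reduces modulo $M$ to $A$ and modulo $N$ to $I_N$. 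To see that $\widehat{A}$ is invertible over $\ZZ / MN \ZZ$, compute $\det \widehat{A}$: its reduction modulo $M$ is $\det A \in (\ZZ / M \ZZ)^{\times}$ and its reduction modulo $N$ is $\det I_N = 1 \in (\ZZ / N \ZZ)^{\times}$, so by Lemma \ref{lemma CRT} (an element of $\ZZ / MN \ZZ$ is a unit if and only if both of its reductions are units) we get $\det \widehat{A} \in (\ZZ / MN \ZZ)^{\times}$, hence $\widehat{A} \in \operatorname{GL}(2, \ZZ / MN \ZZ)$.

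There is essentially no obstacle here; the lemma is a routine bookkeeping consequence of CRT, and is presumably stated only because it (or the explicit lift it provides) is invoked later when building product groups. The single point that deserves a sentence of care is the invertibility of $\widehat{A}$, and that is handled either by invoking Corollary \ref{corollary CRT} as a black box (the first approach) or by the short determinant computation (the second approach). I would present the one-line first version, possibly recording the entrywise lift as well for readers who want an explicit $\widehat{A}$.
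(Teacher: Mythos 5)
Your first argument is exactly the paper's proof: the paper also invokes Corollary \ref{corollary CRT} and takes the preimage of the pair $(A,\operatorname{Id})$ under the CRT isomorphism to obtain $\widehat{A}$. The entrywise lift you add is just an unwinding of the same idea, so your proposal is correct and essentially identical in approach.
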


\begin{proof}
    By Corollary \ref{corollary CRT}, $\operatorname{GL}(2, \ZZ / MN \ZZ)$ is isomorphic to $\operatorname{GL}(2, \ZZ / M \ZZ) \times \operatorname{GL}(2, \ZZ / N \ZZ)$. There is an obvious surjective map, namely the projective map $\pi_{MN,M} \colon \operatorname{GL}(2, \ZZ / M \ZZ) \times \operatorname{GL}(2, \ZZ N \ZZ) \to \operatorname{GL}(2, \ZZ / M \ZZ)$.
    Thus, there is an element $A' = (A,\operatorname{Id}) \in \operatorname{GL}(2, \ZZ / M \ZZ) \times \operatorname{GL}(2, \ZZ / N \ZZ)$ such that $\pi_{MN,N}(A') = A$. Next, we can identify $A' \in \operatorname{GL}(2, \ZZ / M \ZZ) \times \operatorname{GL}(2, \ZZ / N \ZZ)$ with a matrix $\widehat{A} \in \operatorname{GL}(2, \ZZ / MN \ZZ)$ with the desired properties. 
    
\end{proof}

\begin{remark}
    Let $M, N \geq 2$ be coprime integers. Then for a matrix $A \in \operatorname{GL}(2, \ZZ / M \ZZ)$, at times, we will need to fix a matrix in $\operatorname{GL}(2, \ZZ / MN \ZZ)$ that reduces modulo $M$ to $A$. We will denote such a matrix from $\operatorname{GL}(2, \ZZ / MN \ZZ)$ as $\widehat{A}$.
\end{remark}

\begin{lemma}\label{lifting with kernel}
    Let $M$ and $N$ be positive integers and let $\pi \colon \operatorname{GL}(2, \ZZ / MN \ZZ) \to \operatorname{GL}(2, \ZZ / M \ZZ)$ be the natural projection map. Let $\operatorname{H} = \left\langle g_{1}, \ldots, g_{m} \right\rangle$ be a subgroup of $\operatorname{GL}(2, \ZZ / M \ZZ)$. Then $\pi^{-1}(\operatorname{H}) = \left\langle \widehat{g_{1}}, \ldots, \widehat{g_{r}}, \operatorname{Ker}(\pi) \right\rangle$ where for $1 \leq i \leq r$, $\widehat{g_{i}}$ is any fixed element of $\operatorname{GL}(2, \ZZ / MN \ZZ)$ that reduces modulo $M$ to $g_{i}$.
\end{lemma}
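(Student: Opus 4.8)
The plan is to prove the two inclusions $\left\langle \widehat{g_{1}}, \ldots, \widehat{g_{r}}, \operatorname{Ker}(\pi) \right\rangle \subseteq \pi^{-1}(\operatorname{H})$ and $\pi^{-1}(\operatorname{H}) \subseteq \left\langle \widehat{g_{1}}, \ldots, \widehat{g_{r}}, \operatorname{Ker}(\pi) \right\rangle$ separately; here I am reading the statement with $r = m$, so the generating set for $\operatorname{H}$ is $g_{1}, \ldots, g_{r}$. Recall that by Lemma \ref{lemma lifting} each $\widehat{g_{i}} \in \operatorname{GL}(2, \ZZ / MN \ZZ)$ exists, and note that $\operatorname{Ker}(\pi)$ is a normal subgroup of $\operatorname{GL}(2, \ZZ / MN \ZZ)$ since $\pi$ is a homomorphism, so $\pi^{-1}(\operatorname{H})$ is a genuine subgroup and the right-hand side is well-defined.

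First I would establish the easy inclusion $\subseteq$. Every $\widehat{g_{i}}$ satisfies $\pi(\widehat{g_{i}}) = g_{i} \in \operatorname{H}$, so $\widehat{g_{i}} \in \pi^{-1}(\operatorname{H})$; and $\operatorname{Ker}(\pi) = \pi^{-1}(\{\operatorname{Id}\}) \subseteq \pi^{-1}(\operatorname{H})$ trivially. Since $\pi^{-1}(\operatorname{H})$ is a subgroup, it contains the subgroup generated by these elements, giving the inclusion.

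For the reverse inclusion, take any $A \in \pi^{-1}(\operatorname{H})$, so $\pi(A) \in \operatorname{H} = \left\langle g_{1}, \ldots, g_{r} \right\rangle$. Write $\pi(A) = g_{i_{1}}^{\epsilon_{1}} \cdots g_{i_{k}}^{\epsilon_{k}}$ as a word in the generators with $\epsilon_{j} \in \{\pm 1\}$. Set $B = \widehat{g_{i_{1}}}^{\,\epsilon_{1}} \cdots \widehat{g_{i_{k}}}^{\,\epsilon_{k}} \in \left\langle \widehat{g_{1}}, \ldots, \widehat{g_{r}} \right\rangle$. Since $\pi$ is a homomorphism, $\pi(B) = \pi(\widehat{g_{i_{1}}})^{\epsilon_{1}} \cdots \pi(\widehat{g_{i_{k}}})^{\epsilon_{k}} = g_{i_{1}}^{\epsilon_{1}} \cdots g_{i_{k}}^{\epsilon_{k}} = \pi(A)$. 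Hence $\pi(AB^{-1}) = \operatorname{Id}$, i.e. $AB^{-1} \in \operatorname{Ker}(\pi)$, so $A = (AB^{-1})B$ is a product of an element of $\operatorname{Ker}(\pi)$ with an element of $\left\langle \widehat{g_{1}}, \ldots, \widehat{g_{r}} \right\rangle$, both of which lie in $\left\langle \widehat{g_{1}}, \ldots, \widehat{g_{r}}, \operatorname{Ker}(\pi) \right\rangle$. Therefore $A$ lies in that subgroup, completing the proof. This argument is entirely routine group theory and there is no real obstacle; the only point requiring a moment's care is that $\pi$ here is the reduction map $\operatorname{GL}(2, \ZZ / MN \ZZ) \to \operatorname{GL}(2, \ZZ / M \ZZ)$, which is surjective (so that $\pi^{-1}(\operatorname{H})$ surjects onto $\operatorname{H}$), but surjectivity is not even needed for the equality of subgroups stated here.
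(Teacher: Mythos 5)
Your proof is correct and follows essentially the same route as the paper's: write the image of an element of $\pi^{-1}(\operatorname{H})$ as a word in the generators $g_{i}$, lift that word to a product of the $\widehat{g_{i}}$, and observe that the discrepancy lies in $\operatorname{Ker}(\pi)$. Your write-up is in fact a bit more careful than the paper's (which conflates an element of $\pi^{-1}(\operatorname{H})$ with its image in $\operatorname{H}$), and your reading $r=m$ correctly resolves the typo in the statement.
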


\begin{proof}
    It is clear that $\pi^{-1}(\operatorname{H})$ is a group and it is clear that $\left\langle \widehat{g_{1}}, \ldots, \widehat{g_{r}}, \operatorname{Ker}(\pi) \right\rangle \subseteq \pi^{-1}(A)$. Let $x \in \pi^{-1}(\operatorname{H})$. Then $x = h_{1} \cdots h_{s}$ for some $h_{1}, \ldots, h_{s}$ where $h_{i} \in \left\{ g_{1}, \ldots, g_{r} \right\}$ for all $1 \leq i \leq s$. Let $\widehat{x} = \widehat{h_{1}} \cdots \widehat{h_{s}}$ where $\widehat{h_{i}} = \widehat{g_{i}}$ if and only if $h_{i} = g_{i}$ for all $1 \leq i \leq r$. it is clear that $\pi\left(\widehat{x}\right) = x$. Now let $\widehat{y} \in \operatorname{GL}(2, \ZZ / MN \ZZ)$ such that $\pi(\widehat{x}) = \pi(\widehat{y})$. Then
$$\operatorname{Id} = \pi(\widehat{y}) \cdot \pi(\widehat{x})^{-1} = \pi\left(\widehat{y} \cdot \widehat{x}^{-1}\right)$$
and hence, $\widehat{y} \cdot \widehat{x}^{-1} \in \operatorname{Ker}(\pi)$. Thus, we have $\widehat{y} \in \left\langle \widehat{x}, \operatorname{Ker} \right\rangle$. Hence, $\pi^{-1}(\operatorname{H}) \subseteq \left\langle \widehat{g_{1}}, \ldots, \widehat{g_{r}}, \operatorname{Ker}(\pi) \right\rangle$.
\end{proof}

\begin{lemma}\label{product groups}

Let $p$ and $q$ be distinct primes. Let $M \geq 2$ be a power of $p$ and let $N \geq 2$ be a power of $q$. Let $A$ be a subgroup of $\operatorname{GL}\left(2, \ZZ / M \ZZ\right)$ and let $B$ be a subgroup of $\operatorname{GL}\left(2, \ZZ / N \ZZ\right)$. Let $\pi_{1} \colon \operatorname{GL}\left(2, \ZZ / MN \ZZ\right) \to \operatorname{GL}(2, \ZZ / M \ZZ)$ and $\pi_{2} \colon \operatorname{GL}(2, \ZZ / MN \ZZ) \to \operatorname{GL}(\ZZ / N \ZZ)$ be the respective natural projection maps. Then the group of matrices in $\operatorname{GL}(2, \ZZ / MN\ZZ)$ that reduces modulo $M$ to $A$ and reduces modulo $N$ to $B$ is conjugate to $\left\langle \widehat{A}, \operatorname{Ker}(\pi_{1}) \right\rangle \bigcap \left\langle \widehat{B}, \operatorname{Ker}(\pi_{2}) \right\rangle$, where $\widehat{A}$ is any subgroup of $\operatorname{GL}(2, \ZZ / M N \ZZ)$ that reduces modulo $N$ to $A$ and $\widehat{B}$ is any subgroup of $\operatorname{GL}(2, \ZZ / MN \ZZ)$ that reduces modulo $N$ to $B$.

\end{lemma}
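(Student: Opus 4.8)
The plan is to peel the statement apart into the two coordinate projections, treat each one with Lemma \ref{lifting with kernel}, and then intersect. First I would note that the ``group of matrices in $\operatorname{GL}(2, \ZZ / MN \ZZ)$ that reduces modulo $M$ to $A$ and modulo $N$ to $B$'' is, by the very definition of $\pi_{1}$ and $\pi_{2}$, nothing other than $\pi_{1}^{-1}(A) \cap \pi_{2}^{-1}(B)$, which is manifestly a subgroup of $\operatorname{GL}(2, \ZZ / MN \ZZ)$. So it suffices to identify each preimage $\pi_{i}^{-1}(\cdot)$ with the group generated by a lift and the corresponding kernel. (Here I read the hypothesis on $\widehat{A}$ as saying that $\widehat{A}$ is any subgroup of $\operatorname{GL}(2, \ZZ/MN\ZZ)$ with $\pi_{1}(\widehat{A}) = A$, and similarly $\pi_{2}(\widehat{B}) = B$.)

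The key step is the claim that $\pi_{1}^{-1}(A) = \langle \widehat{A}, \operatorname{Ker}(\pi_{1}) \rangle$ for \emph{any} such lift $\widehat{A}$. The inclusion $\supseteq$ is immediate: $\widehat{A} \subseteq \pi_{1}^{-1}(A)$ because every element of $\widehat{A}$ has $\pi_{1}$-image in $A$, and $\operatorname{Ker}(\pi_{1}) \subseteq \pi_{1}^{-1}(A)$ trivially, while $\pi_{1}^{-1}(A)$ is a group. For $\subseteq$, take $x \in \pi_{1}^{-1}(A)$; since $\pi_{1}$ restricts to a surjection $\widehat{A} \to A$, there is $a \in \widehat{A}$ with $\pi_{1}(a) = \pi_{1}(x)$, whence $xa^{-1} \in \operatorname{Ker}(\pi_{1})$ and $x = (xa^{-1})a \in \langle \widehat{A}, \operatorname{Ker}(\pi_{1}) \rangle$. (This is exactly the mechanism of Lemma \ref{lifting with kernel}; one can equivalently write $A = \langle a_{1}, \dots, a_{r} \rangle$, pick lifts $\widehat{a_{i}} \in \widehat{A}$, and quote that lemma verbatim.) Running the identical argument for $\pi_{2}$ gives $\pi_{2}^{-1}(B) = \langle \widehat{B}, \operatorname{Ker}(\pi_{2}) \rangle$, and intersecting the two identities yields
$$\pi_{1}^{-1}(A) \cap \pi_{2}^{-1}(B) = \langle \widehat{A}, \operatorname{Ker}(\pi_{1}) \rangle \cap \langle \widehat{B}, \operatorname{Ker}(\pi_{2}) \rangle,$$
which is the assertion; the ``conjugate to'' in the statement simply absorbs the fact that $A$ and $B$ are pinned down only up to conjugacy in $\operatorname{GL}(2, \ZZ/M\ZZ)$ and $\operatorname{GL}(2, \ZZ/N\ZZ)$, and conjugating $A$ by $\gamma$ and $B$ by $\delta$ can be realized simultaneously inside $\operatorname{GL}(2, \ZZ/MN\ZZ)$ by a single element via Corollary \ref{corollary CRT}.

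I do not expect a genuine obstacle here; the content is entirely bookkeeping. The two points to be careful about are: (i) keeping straight that $\widehat{A}$ is a lift with respect to the mod-$M$ map while $\widehat{B}$ is a lift with respect to the mod-$N$ map (the apparent ``mod $N$'' in both clauses of the statement should be ``mod $M$'' for $\widehat{A}$); and (ii) that $\operatorname{Ker}(\pi_{1})$ and $\operatorname{Ker}(\pi_{2})$ are transverse — under the isomorphism of Corollary \ref{corollary CRT} induced by Lemma \ref{lemma CRT} they are $\{I\} \times \operatorname{GL}(2, \ZZ/N\ZZ)$ and $\operatorname{GL}(2, \ZZ/M\ZZ) \times \{I\}$, so the final intersection really is the ``product group'' $A \times B$. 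Nothing beyond the Chinese remainder decomposition and the surjectivity-plus-kernel argument already isolated in Lemma \ref{lifting with kernel} is needed.
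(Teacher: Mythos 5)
Your argument is correct and follows essentially the same route as the paper's proof: identify the set in question as $\pi_{1}^{-1}(A)\cap\pi_{2}^{-1}(B)$, apply the mechanism of Lemma \ref{lifting with kernel} to each projection, and intersect. In fact your version is slightly sharper, since you establish the equality of groups directly (and correctly flag the mod-$M$/mod-$N$ typo in the statement of $\widehat{A}$), whereas the paper phrases the two inclusions in the looser ``conjugate to a subgroup of'' language.
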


\begin{proof}
Let $H$ be a subgroup of $\operatorname{GL}\left(2, \ZZ / MN \ZZ\right)$ that reduces modulo $M$ to $A$. Using a similar proof to Lemma \ref{lifting with kernel}, we see that $H$ is conjugate to a subgroup of $\left\langle \widehat{A}, \operatorname{Ker}(\pi_{1}) \right\rangle$ where $\widehat{A}$ is any lift of $A$ to level $MN$. Also, $H$ reduces modulo $N$ to $B$. Similarly, we see that $H$ is conjugate to a subgroup of $\left\langle \widehat{B}, \operatorname{Ker}(\pi_{2}) \right\rangle$ where $\widehat{B}$ is any lift of $B$ to level $MN$. Hence, $H$ is conjugate to a subgroup of $\left\langle \widehat{A}, \operatorname{Ker}(\pi_{1}) \right\rangle \bigcap \left\langle \widehat{B}, \operatorname{Ker}(\pi_{2}) \right\rangle$. Conversely, a matrix in $\left\langle \widehat{A}, \operatorname{Ker}(\pi) \right\rangle \bigcap \left\langle \widehat{B}, \operatorname{Ker}(\pi_{2}) \right\rangle$ reduces modulo $M$ to a matrix in $A$ and simultaneously reduces modulo $N$ to a matrix in $B$.
\end{proof}

\begin{remark}
    The statements of the preceding lemmas and corollaries of this section can be extended to arbitrarily large factors.
\end{remark}

\begin{lemma}\label{noncurious subgroup critertion}
Let $N$ be a positive integer. Suppose that $\operatorname{H}$ is an arithmetically admissible group of level $N$. Let $\operatorname{X}_{\operatorname{H}}$ be the modular curve generated by $\operatorname{H}$ and suppose that $\operatorname{X}_{\operatorname{H}}(\QQ)$ is infinite. Suppose moreover that no proper arithmetically admissible subgroup of $\operatorname{H}$ generates a modular curve with infinitely many rational points. Then $\operatorname{H}$ is not a curious group of level $N$.
\end{lemma}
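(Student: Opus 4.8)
The plan is to argue by contradiction: assume $\operatorname{H}$ is a curious group of level $N$ and deduce that $\operatorname{X}_{\operatorname{H}}(\QQ)$ is finite, contradicting the hypothesis.

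First I would assemble the finiteness inputs. Since $\operatorname{GL}(2,\ZZ/N\ZZ)$ is finite, $\operatorname{H}$ has only finitely many proper subgroups; let $\operatorname{H}'_1,\dots,\operatorname{H}'_k$ be those proper subgroups of $\operatorname{H}$ that satisfy conditions (1)--(3) of Definition~\ref{Def 1}, so that each generates a modular curve $\operatorname{X}_{\operatorname{H}'_i}$. By hypothesis none of the $\operatorname{X}_{\operatorname{H}'_i}$ has infinitely many rational points, hence each $\operatorname{X}_{\operatorname{H}'_i}(\QQ)$ is finite by Theorem~\ref{Faltings}, and therefore each set of $j$-invariants $\pi_{\operatorname{H}'_i}(\operatorname{X}_{\operatorname{H}'_i}(\QQ))$ is finite. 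Put $S:=\{0,1728,\infty\}\cup\bigcup_{i=1}^{k}\pi_{\operatorname{H}'_i}(\operatorname{X}_{\operatorname{H}'_i}(\QQ))\subseteq\PP^1(\QQ)$, a finite set.

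The main step is to show that $\pi_{\operatorname{H}}(P)\in S$ for every $P\in\operatorname{X}_{\operatorname{H}}(\QQ)$. If $\pi_{\operatorname{H}}(P)\in\{0,1728,\infty\}$ there is nothing to prove, so assume $j:=\pi_{\operatorname{H}}(P)\neq 0,1728,\infty$ and choose an elliptic curve $E/\QQ$ with $j(E)=j$. Since $\operatorname{H}$ is arithmetically admissible, the characterization of $\pi_{\operatorname{H}}(\operatorname{X}_{\operatorname{H}}(\QQ))$ recalled before Lemma~\ref{double cover} shows that $G:=\overline{\rho}_{E,N}(G_{\QQ})$ is conjugate in $\operatorname{GL}(2,\ZZ/N\ZZ)$ to a subgroup of $\operatorname{H}$. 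Set $G':=\langle G,\operatorname{-Id}\rangle$. Because $E$ is defined over $\QQ$, the group $G$ has full determinant modulo $N$ and contains a representative of complex conjugation, so $G'$ satisfies (1)--(3); and since $\operatorname{-Id}\in\operatorname{H}$ is central, $G'$ is again conjugate to a subgroup of $\operatorname{H}$. I would then argue that $G'$ cannot be conjugate to $\operatorname{H}$ itself: if $\operatorname{-Id}\in G$ then $G'=G$, and $G$ conjugate to $\operatorname{H}$ contradicts curiousness directly; if $\operatorname{-Id}\notin G$ (which forces $N\ge 3$, the case $N\le 2$ being vacuous since then $\operatorname{-Id}$ is the identity), then by the quadratic-twist remark in Section~\ref{section background} there is a nonzero integer $d$ with $\overline{\rho}_{E^{(d)},N}(G_{\QQ})$ conjugate to $\langle G,\operatorname{-Id}\rangle=G'$, so $G'$ conjugate to $\operatorname{H}$ would again contradict curiousness via $E^{(d)}$. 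Hence $G'$ is conjugate to a proper subgroup of $\operatorname{H}$ satisfying (1)--(3), i.e. to some $\operatorname{H}'_i$; since $G\subseteq G'$, the curve $E$ has $\overline{\rho}_{E,N}(G_{\QQ})$ conjugate to a subgroup of $\operatorname{H}'_i$, and so $j=j(E)\in\pi_{\operatorname{H}'_i}(\operatorname{X}_{\operatorname{H}'_i}(\QQ))\subseteq S$, as required.

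To finish, since $\pi_{\operatorname{H}}\colon\operatorname{X}_{\operatorname{H}}\to\PP^1_{\QQ}$ has finite degree, $\pi_{\operatorname{H}}^{-1}(S)$ is a finite set, and the previous step gives $\operatorname{X}_{\operatorname{H}}(\QQ)\subseteq\pi_{\operatorname{H}}^{-1}(S)$; thus $\operatorname{X}_{\operatorname{H}}(\QQ)$ is finite, contradicting the hypothesis, so $\operatorname{H}$ is not curious. The step I expect to require the most care is precisely the case $\operatorname{-Id}\notin\overline{\rho}_{E,N}(G_{\QQ})$: one cannot conclude from curiousness of $\operatorname{H}$ alone, and must pass to a suitable quadratic twist (using Section~10 of \cite{Rouse2021elladicIO} and Corollary~5.25 of \cite{sutherland_2016}) to see that the $\operatorname{-Id}$-saturation of the image cannot equal $\operatorname{H}$. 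The remaining ingredients — finiteness of the subgroup lattice of $\operatorname{GL}(2,\ZZ/N\ZZ)$, Faltings' theorem applied to the $\operatorname{X}_{\operatorname{H}'_i}$, and finiteness of the degree of $\pi_{\operatorname{H}}$ — are routine.
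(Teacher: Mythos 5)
Your proof is correct and is essentially the paper's own argument in expanded form: the paper disposes of this lemma in one line (``$\operatorname{H}$ has finitely many subgroups and we can apply the pigeonhole principle''), and your contradiction argument is exactly that counting idea carried out in detail. The extra care you take --- Faltings/finiteness for the proper arithmetically admissible subgroups, finiteness of the fibres of $\pi_{\operatorname{H}}$, and especially the quadratic-twist step when $\operatorname{-Id}\notin\overline{\rho}_{E,N}(G_{\QQ})$ --- supplies precisely the details the paper's one-line proof leaves implicit.
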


\begin{proof}
The group $\operatorname{H}$ has finitely many subgroups and we can apply the pigeonhole principle.
\end{proof}

\begin{corollary}\label{noncurious subgroup corollary}
    Let $N$ be a positive integer. Suppose that $\operatorname{H}$ is an arithmetically admissible group of level $N$. Let $\operatorname{X}_{\operatorname{H}}$ be the modular curve generated by $\operatorname{H}$ and suppose that $\operatorname{X}_{\operatorname{H}}(\QQ)$ is infinite. Suppose moreover that the genus of each proper arithmetically admissible subgroup of $\operatorname{H}$ is greater than or equal to $2$. Then $\operatorname{H}$ is not a curious group of level $N$.
\end{corollary}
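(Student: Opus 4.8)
The plan is to deduce this immediately from Lemma~\ref{noncurious subgroup critertion} together with Faltings's theorem (Theorem~\ref{Faltings}). The only thing to check is that the hypothesis of Lemma~\ref{noncurious subgroup critertion} — that no proper arithmetically admissible subgroup of $\operatorname{H}$ generates a modular curve with infinitely many rational points — is a consequence of the stronger hypothesis here. So I would start by letting $\operatorname{H}'$ be an arbitrary proper arithmetically admissible subgroup of $\operatorname{H}$; by assumption the genus $g(\operatorname{H}')$ of $\operatorname{X}_{\operatorname{H}'}$ is at least $2$, so the third bullet of Theorem~\ref{Faltings} gives that $\operatorname{X}_{\operatorname{H}'}(\QQ)$ is finite, and in particular not infinite. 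Since $\operatorname{H}'$ was arbitrary, the hypothesis of Lemma~\ref{noncurious subgroup critertion} holds.

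With that in hand, Lemma~\ref{noncurious subgroup critertion} applies directly: $\operatorname{H}$ is arithmetically admissible of level $N$, $\operatorname{X}_{\operatorname{H}}(\QQ)$ is infinite by hypothesis, and no proper arithmetically admissible subgroup of $\operatorname{H}$ generates a modular curve with infinitely many rational points, so $\operatorname{H}$ is not a curious group of level $N$. This is the whole argument.

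For completeness I would also recall, in a sentence or two, the pigeonhole mechanism underlying Lemma~\ref{noncurious subgroup critertion}, since it clarifies why the finiteness of the $\operatorname{X}_{\operatorname{H}'}(\QQ)$ suffices. The group $\operatorname{H}$ has only finitely many subgroups, hence only finitely many proper arithmetically admissible subgroups $\operatorname{H}_{1},\dots,\operatorname{H}_{r}$; each $\pi_{\operatorname{H}_{i}}\bigl(\operatorname{X}_{\operatorname{H}_{i}}(\QQ)\bigr)$ is then a finite set of $j$-invariants. Deleting these finitely many $j$-invariants, together with the finitely many cusps and the points with $j = 0, 1728$, from the infinite set $\pi_{\operatorname{H}}\bigl(\operatorname{X}_{\operatorname{H}}(\QQ)\bigr)$ still leaves infinitely many $j$-invariants of elliptic curves $E/\QQ$ whose mod-$N$ image lies in $\operatorname{H}$ but in no proper arithmetically admissible subgroup; by the quadratic-twist remarks (passing from an image $G$ not containing $\operatorname{-Id}$ to $\langle G, \operatorname{-Id}\rangle$, and twisting when $\langle G, \operatorname{-Id}\rangle = \operatorname{H}$), such an $E$, or a quadratic twist of it, has mod-$N$ image conjugate to $\operatorname{H}$ exactly.

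I do not expect a genuine obstacle: the content is entirely absorbed by Theorem~\ref{Faltings} and Lemma~\ref{noncurious subgroup critertion}. The one point that warrants a moment of care is the step just mentioned — ensuring that an elliptic curve realizing a proper subgroup $G\subsetneq\operatorname{H}$ really does force a rational point on $\operatorname{X}_{\langle G,\operatorname{-Id}\rangle}$ with $\langle G,\operatorname{-Id}\rangle$ a \emph{proper} arithmetically admissible subgroup, or else (when $\langle G,\operatorname{-Id}\rangle=\operatorname{H}$) that $\operatorname{H}$ is already realized by a twist — but this is exactly what the quadratic-twist remarks preceding this section provide.
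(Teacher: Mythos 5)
Your argument is correct and matches the paper's: the paper proves this corollary exactly as an application of Theorem~\ref{Faltings} (genus $\geq 2$ forces finiteness of rational points on each proper arithmetically admissible subgroup's modular curve) combined with Lemma~\ref{noncurious subgroup critertion}. Your additional unpacking of the pigeonhole/quadratic-twist mechanism is just an elaboration of the lemma's own (terse) proof and does not change the route.
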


\begin{proof}
     This is simply an application of Theorem \ref{Faltings} and Lemma \ref{noncurious subgroup critertion}.
\end{proof}

\begin{remark}
    In \cite{Rouse2021elladicIO}, the authors classified the image of the $\ell$-adic Galois representation attached to elliptic curves defined over $\QQ$ for all primes $\ell$. The associated code to \cite{Rouse2021elladicIO} gives a way to compute the genus of an open subgroup of $\operatorname{GL}\left(2, \widehat{\ZZ}\right)$. In this paper, we are interested in curious Galois groups that arise as direct products of proper subgroups of groups of the form $\operatorname{GL}(2,\ZZ_{p})$ for primes $p$. For example, let $p$ and $q$ are distinct primes and let $M,N \geq 2$ be powers of $p$ and $q$ respectively. Let $\operatorname{H}_{p}$ be a group of level $M$ and let $\operatorname{H}_{q}$ be a group of level $N$ and let $\operatorname{H} = \operatorname{H}_{p} \times \operatorname{H}_{q}$. We use Lemma \ref{product groups} to find a subgroup of $\operatorname{GL}(2, \ZZ / MN \ZZ)$ that is isomorphic to $\operatorname{H}$ and by abuse of notation, call this group $\operatorname{H}$ also. We then use the command \texttt{GL2Genus} to compute the genus of $\operatorname{H}$.
    
    Suppose that $\operatorname{H}'$ is an arithmetically admissible group of genus $1$ and level $N$. Let $\operatorname{X}_{\operatorname{H}'}$ be the modular curve generated by $\operatorname{H}'$. If $\operatorname{X}_{\operatorname{H}'}$ has finitely many points, then we conclude that $\operatorname{H}'$ is not a curious Galois group. Otherwise, suppose that $\operatorname{X}_{\operatorname{H}'}$ is an elliptic curve of positive rank. If $\operatorname{H}'$ has no subgroups that generate modular curves with infinitely many rational points, then we use Corollary \ref{noncurious subgroup corollary} to prove that $\operatorname{H}'$ is not curious. If otherwise, $\operatorname{H}'$ contains at least one proper subgroup $\operatorname{H}$ of genus $1$ and if there is no elliptic curve $E/\QQ$ in the LMFDB such that $\overline{\rho}_{E,N}(G_{\QQ})$ is conjugate to $\operatorname{H}'$ itself, then we suspect that $\operatorname{H}'$ is a curious Galois group. In this case, there might a proper arithmetically admissible subgroup $\operatorname{H}$ of $\operatorname{H}'$ of genus $1$ and index $3$ and $\operatorname{X}_{\operatorname{H}}(\QQ) = \operatorname{X}_{\operatorname{H}'}(\QQ)$ or $\operatorname{H}$ is an arithmetically admissible subgroup of $\operatorname{H}'$ of index $2$ and we use a collection of two or four modular curves that cover $\operatorname{X}_{\operatorname{H}'}$ by rational maps of degree $2$.
\end{remark}

\subsection{Notes about mod-$N$ images and adelic images}
Before we start the proof of Theorem \ref{main Theorem}, we pause to point out a difference between mod-$N$ images and adelic images. Consider the group $\operatorname{GL}\left(2, \widehat{\ZZ}\right)$ with label \href{https://beta.lmfdb.org/ModularCurve/Q/1.1.0.1/}{\texttt{1.1.0.1}}. By Theorem 1.2 of \cite{Greicius} and the subsequent work on the same page containing the theorem, there is no elliptic curve $E/\QQ$ such that $\rho_{E}(G_{\QQ}) = \operatorname{GL}\left(2, \widehat{\ZZ}\right)$. That is not to say that $\operatorname{GL}\left(2, \widehat{\ZZ}\right)$ is a curious Galois group by the definition found in this paper. Note that $\operatorname{GL}\left(2, \widehat{\ZZ}\right)$ is a group of level $1$ and for any elliptic curve $E/\QQ$, $\overline{\rho}_{E,1}(G_{\QQ})$ is conjugate to $\operatorname{GL}\left(2, \widehat{\ZZ}\right)$ modulo $1$. Classifying curious Galois groups by adelic image is a different question than the one investigated here (see \cite{rakvi2023classification}).

\section{Main work}\label{section main work}

Let $E/\QQ$ be an elliptic curve and let $N$ be a positive integer. Then $\rho_{E}(G_{\QQ})$ is a subset of $\underset{n \geq 0}{\varprojlim}\operatorname{GL}(2, \ZZ / N \ZZ) \cong \underset{\ell}{\Pi}\operatorname{GL}(2, \ZZ_{\ell})$. In this paper, we are interested in Galois images that are direct products. In other words, for primes $p_{1} < \ldots < p_{m}$, we discuss curious Galois groups of the form $\operatorname{H}_{p_{1}} \times \ldots \times \operatorname{H}_{p_{m}}$ where $\operatorname{H}_{p_{i}} \subseteq \operatorname{GL}(2, \ZZ_{p_{i}})$ for $1 \leq i \leq m$. Let us say that $\operatorname{H}_{p_{m}} = \operatorname{GL}(2, \ZZ / p_{m} \ZZ)$. Let $A = \operatorname{H}_{p_{1}} \times \ldots \times \operatorname{H}_{p_{m-1}}$ and let $B = A \times \operatorname{H}_{p_{m}}$. Let $\operatorname{X}_{A}$ be the modular curve generated by $A$ and let $\operatorname{X}_{B}$ be the modular curve generated by $B$. Then $\operatorname{X}_{A}$ is isomorphic to $\operatorname{X}_{B}$. For the rest of the paper, we will consider curious Galois groups of the form $\operatorname{H}_{p_{1}} \times \ldots \times \operatorname{H}_{p_{m}}$ with $\operatorname{H}_{p_{i}} \subsetneq \operatorname{GL}(2, \ZZ / p_{i})$ for all $1 \leq i \leq m$. We will call curious Galois groups of the form $\operatorname{H}_{p_{1}} \times \ldots \times \operatorname{H}_{p_{m}}$ with $\operatorname{H}_{p_{i}} \subsetneq \operatorname{GL}(2, \ZZ / p_{i})$ for all $1 \leq i \leq m$ to be curious Galois groups of type $(p_{1}, \ldots, p_{m})$.

Let $\operatorname{H}$ be a curious Galois group of type $(p_{1}, \ldots, p_{m})$. For $1 \leq i \leq m$, let $\operatorname{H}_{p_{i}}$ be the reduction of $\operatorname{H}$ to $p_{i}$-power level. Then $\operatorname{H}$ is a subgroup of the full preimage of $\operatorname{H}_{p_{i}}$ inside $\operatorname{GL}\left(2, \widehat{\ZZ}\right)$. Let $\operatorname{X}_{\operatorname{H}}$ be the modular curve defined by $\operatorname{H}$ and let $\operatorname{X}_{\operatorname{H}_{p_{i}}}$ be the modular curve defined by $\operatorname{H}_{p_{i}}$. Then there is a rational morphism
$$\psi \colon \operatorname{X}_{\operatorname{H}} \to \operatorname{X}_{p_{i}}$$
of (finite) degree $[\operatorname{H}_{p_{i}} : \operatorname{H}]$. As $\operatorname{X}$ contains infinitely many rational points, as $\psi$ maps rational points to rational points, and as the degree of $\psi$ is finite, we have that $\operatorname{X}_{p_{i}}$ has infinitely many rational points. Hence, $\operatorname{H}_{p_{i}}$ is a group from the SZ (\cite{SZ}) database. The rest of this article is devoted to classifying all curious Galois groups of type $(p_{1}, \ldots, p_{m})$ where $p_{1}, \ldots, p_{m}$ are distinct primes between $2$ and $13$. We will show that $m = 1$ or $2$ and fully classify curious Galois groups of type $(p_{1})$ and type $(p_{1},p_{2})$ as $p_{1}$ and $p_{2}$ range through all primes. First we classify the curious Galois groups of type $(p)$ where $p$ is a prime. This is a direct result of work from \cite{Rouse} and \cite{SZ}.

\begin{lemma}[Rouse, Zureick-Brown, \cite{Rouse}, Sutherland, Zywina, \cite{SZ}]

There are seven curious Galois groups of type $(p)$; namely the groups \href{https://beta.lmfdb.org/ModularCurve/Q/16.24.1.5/}{\texttt{16.24.1.5}}, \href{https://beta.lmfdb.org/ModularCurve/Q/16.24.1.10/}{\texttt{16.24.1.10}}, \href{https://beta.lmfdb.org/ModularCurve/Q/16.24.1.11/}{\texttt{16.24.1.11}}, \href{https://beta.lmfdb.org/ModularCurve/Q/16.24.1.13/}{\texttt{16.24.1.13}}, \href{https://beta.lmfdb.org/ModularCurve/Q/16.24.1.15/}{\texttt{16.24.1.15}}, \href{https://beta.lmfdb.org/ModularCurve/Q/16.24.1.17/}{\texttt{16.24.1.17}}, and \href{https://beta.lmfdb.org/ModularCurve/Q/16.24.1.19/}{\texttt{16.24.1.19}}.

\end{lemma}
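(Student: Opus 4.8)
The plan is to reduce the statement to a finite, already-completed database check. Suppose $\operatorname{H}=\operatorname{H}_{p}$ is a curious Galois group of type $(p)$, so (up to quadratic twist) $\operatorname{H}$ is an arithmetically admissible proper subgroup of $\operatorname{GL}(2,\ZZ_{p})$ of some level $p^{k}$ with $\operatorname{X}_{\operatorname{H}}(\QQ)$ infinite and no elliptic curve $E/\QQ$ realizing $\operatorname{H}$. By Corollary \ref{genus 1 corollary}, $\operatorname{X}_{\operatorname{H}}$ is an elliptic curve of positive Mordell--Weil rank; in particular it has genus $1$, so Theorem \ref{Faltings} rules out genus $\ge 2$ and Corollary \ref{HIT corollary} rules out genus $0$. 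Pushing forward to the $j$-line, $\operatorname{X}_{\operatorname{H}}$ covers a modular curve with infinitely many rational points, so $\operatorname{H}$ reduces into the Sutherland--Zywina list of prime-power-level groups whose modular curve has infinitely many non-cuspidal rational points; in particular $p\le 13$, and $\operatorname{H}$ lies in an explicit finite set of genus-$1$ groups of $p$-power level. It then suffices to run through this finite list and, for each group, either exhibit an $E/\QQ$ with $\overline{\rho}_{E,N}(G_{\QQ})$ conjugate to $\operatorname{H}$ (so $\operatorname{H}$ is not curious), or check that $\operatorname{X}_{\operatorname{H}}$ has rank $0$ (contradicting curiousness), or confirm $\operatorname{H}$ is one of the seven claimed groups.

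For the odd primes $p\in\{3,5,7,11,13\}$, I would inspect the genus-$1$ entries of $p$-power level in \cite{SZ}: in every such case the associated modular curve is either of rank $0$ over $\QQ$ (hence has only finitely many rational points) or is realized as the mod-$N$ image of an elliptic curve $E/\QQ$ recorded in the LMFDB. Either way, Definition \ref{Def 2} fails, so there is no curious Galois group of type $(p)$ with $p$ odd; this is consistent with the fact that the smallest curious groups involving an odd prime (those of level $15$ and $24$) only arise as \emph{products}.

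The substance is the case $p=2$, and here I would cite Section~6 of \cite{Rouse} directly: Rouse and Zureick-Brown classified all open subgroups of $\operatorname{GL}(2,\ZZ_{2})$ occurring as $2$-adic images and, in doing so, examined every $\operatorname{H}$ of $2$-power level with $\operatorname{X}_{\operatorname{H}}$ of genus $1$ and positive rank. For all but seven of these they produce an elliptic curve $E/\QQ$ whose mod-$2^{\infty}$ image is the preimage of $\operatorname{H}$, so $\operatorname{H}$ is not curious. The remaining seven --- $\operatorname{H}_{150},\operatorname{H}_{153},\operatorname{H}_{155},\operatorname{H}_{156},\operatorname{H}_{165},\operatorname{H}_{166},\operatorname{H}_{167}$ in the RZB labeling --- each generate an elliptic curve $\operatorname{X}_{\operatorname{H}}$ with Mordell--Weil group $\ZZ\times\ZZ/2\ZZ$ in which every rational point lifts to a proper covering modular curve; hence no $E/\QQ$ has mod-$16$ image conjugate to $\operatorname{H}$, and these are genuinely curious. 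Finally I would match the RZB labels with the labels of \cite{rouse_sutherland_zureick-brown_2022}, obtaining exactly \texttt{16.24.1.5}, \texttt{16.24.1.10}, \texttt{16.24.1.11}, \texttt{16.24.1.13}, \texttt{16.24.1.15}, \texttt{16.24.1.17}, \texttt{16.24.1.19}, all of level $16$, index $24$, and genus $1$.

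The only real obstacle is the $p=2$ input: one must rely on (or re-verify) the RZB computation that (i) for each of the seven listed curves, all rational points are accounted for by proper subgroups, i.e.\ no elliptic curve realizes $\operatorname{H}$; and (ii) no other genus-$1$ positive-rank $2$-adic group has this property. Both are established in \cite{Rouse}; the genuinely new content is purely the bookkeeping --- translating the RZB labels $\operatorname{H}_{i}$ into the labels \texttt{16.24.1.n}, and confirming that the odd-prime-power genus-$1$ curves in \cite{SZ} contribute nothing new (via Corollary \ref{noncurious subgroup corollary} or explicit LMFDB examples).
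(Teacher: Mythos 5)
Your proposal is correct and follows essentially the same route as the paper: both arguments reduce to genus considerations (Faltings for genus $\ge 2$, the Hilbert-irreducibility corollary for genus $0$, so a curious group must give an elliptic curve of positive rank), invoke \cite{SZ} to bound $p\le 13$ and to see that the odd-prime-power cases contribute nothing (the only genus-$1$ odd case, \texttt{11.55.1.1}, being realized by an explicit elliptic curve), and then cite the Rouse--Zureick-Brown analysis for $p=2$ to identify exactly the seven level-$16$ groups. The only difference is presentational bookkeeping (your explicit RZB-to-RSZB label matching), which the paper does implicitly.
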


\begin{proof}
Let $p$ be a prime number and let $\operatorname{H}$ be a proper, arithmetically admissible subgroup of $\operatorname{GL}\left(2, \widehat{\ZZ}\right)$ of $p$-power level. Suppose that $\operatorname{X}_{\operatorname{H}}$ has infinitely many non-cuspidal points defined over the rationals. Then $2 \leq p \leq 13$. If $p$ is an odd prime not equal to $11$, then the genus of $\operatorname{H}$ is equal to $0$ by \cite{SZ}. If $p = 2$ and $\operatorname{H}$ is not one of the seven groups in the hypothesis, then the genus of $\operatorname{H}$ is equal to $0$. Suppose that $p = 11$. Then by the work in \cite{SZ}, $\operatorname{H}$ is conjugate to the group \href{https://beta.lmfdb.org/ModularCurve/Q/11.55.1.1/}{\texttt{11.55.1.1}}. Let $E/\QQ$ be the elliptic curve \href{https://www.lmfdb.org/EllipticCurve/Q/232544/f/1}{\texttt{232544.f1}}. Then $\overline{\rho}_{E,11}(G_{\QQ})$ is conjugate to \href{https://beta.lmfdb.org/ModularCurve/Q/11.55.1.1/}{\texttt{11.55.1.1}}.
\end{proof}

We continue with classifying curious Galois groups of type $(p_{1},p_{2})$.

\begin{theorem} \label{odd theorem}
    Let $p_{1} < p_{2}$ be odd prime numbers and let $\operatorname{H}$ be a curious Galois group of type $(p_{1},p_{2})$. Then $\operatorname{H}$ is conjugate to the group \href{https://lmfdb.org/ModularCurve/Q/15.15.1.1/}{\texttt{15.15.1.1}}.
\end{theorem}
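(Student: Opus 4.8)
The plan is to reduce the classification to a finite computation, eliminate the prime $11$ with a Riemann--Hurwitz estimate, and then verify directly that the unique surviving group is curious by the method of Section~\ref{subsection structure and methodology}. Write $\operatorname{H}=\operatorname{H}_{p_1}\times\operatorname{H}_{p_2}$. By Corollary~\ref{genus 1 corollary}, $\operatorname{X}_{\operatorname{H}}$ is an elliptic curve of positive rank, so $g(\operatorname{X}_{\operatorname{H}})=1$. The morphism $\operatorname{X}_{\operatorname{H}}\to\operatorname{X}_{\operatorname{H}_{p_i}}$ is finite and nonconstant, so $\operatorname{X}_{\operatorname{H}_{p_i}}(\Q)$ is infinite; by Sutherland--Zywina~\cite{SZ}, $p_i\le 13$ and $\operatorname{H}_{p_i}$ is conjugate to a group in their tables. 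Because $\operatorname{H}$ is a genuine direct product, $\operatorname{H}_{p_i}$ is its reduction to $p_i$-power level, so $-\operatorname{Id}\in\operatorname{H}_{p_i}$, $\operatorname{H}_{p_i}$ has full determinant, and a complex-conjugation element of $\operatorname{H}$ reduces to one of $\operatorname{H}_{p_i}$ (over $\ZZ/p_i^{k}\ZZ$ with $p_i$ odd, $\smallmat{1}{1}{0}{-1}$ is conjugate to $\smallmat{1}{0}{0}{-1}$). Hence each $\operatorname{H}_{p_i}$ is arithmetically admissible, $p_1,p_2\in\{3,5,7,11,13\}$, and by~\cite{SZ} $\operatorname{X}_{\operatorname{H}_{p_i}}$ has genus $0$ unless $p_i=11$, in which case $\operatorname{H}_{11}=\texttt{11.55.1.1}$, of genus $1$ and index $55$.

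\textbf{Eliminating $11$.} Suppose $11\in\{p_1,p_2\}$; then $\operatorname{H}_{11}=\texttt{11.55.1.1}$ and the other factor $\operatorname{H}_q$ has genus $0$ and index $b\ge 2$. As the two levels are coprime, the index is multiplicative and the numbers of elliptic points and cusps multiply across the factors, so writing $\nu_2,\nu_3,\nu_\infty$ and $\nu_2',\nu_3',\nu_\infty'$ for those of $\operatorname{X}_{\operatorname{H}_{11}}$ and $\operatorname{X}_{\operatorname{H}_q}$,
$$g(\operatorname{X}_{\operatorname{H}})=1+\frac{55\,b}{12}-\frac{\nu_2\nu_2'}{4}-\frac{\nu_3\nu_3'}{3}-\frac{\nu_\infty\nu_\infty'}{2}.$$
Since $\operatorname{X}_{\operatorname{H}_{11}}$ has genus $1$ and index $55$, the genus formula gives $\tfrac{\nu_2}{4}+\tfrac{\nu_3}{3}+\tfrac{\nu_\infty}{2}=\tfrac{55}{12}$; furthermore $\nu_2,\nu_3\ge 1$ (the ramification of $\operatorname{X}_{\operatorname{H}_{11}}\to\operatorname{X}(1)$ over $j=1728$, resp.\ $j=0$, has indices dividing $2$, resp.\ $3$, and total $55$, which is divisible by neither), while $\nu_\infty\ge 1$ always. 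Using $\nu_2',\nu_3',\nu_\infty'\le b$,
$$\frac{\nu_2\nu_2'}{4}+\frac{\nu_3\nu_3'}{3}+\frac{\nu_\infty\nu_\infty'}{2}\le b\Bigl(\frac{\nu_2}{4}+\frac{\nu_3}{3}+\frac{\nu_\infty}{2}\Bigr)=\frac{55\,b}{12},$$
so $g(\operatorname{X}_{\operatorname{H}})\ge 1$, with equality only if $\nu_2'=\nu_3'=\nu_\infty'=b$, i.e.\ only if $\operatorname{X}_{\operatorname{H}_q}\to\operatorname{X}(1)$ is unramified over $0,1728,\infty$; that is impossible for $b\ge 2$ by Riemann--Hurwitz. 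Hence $g(\operatorname{X}_{\operatorname{H}})\ge 2$, contradicting Corollary~\ref{genus 1 corollary}. Therefore $p_1,p_2\in\{3,5,7,13\}$ and both factors have genus $0$.

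\textbf{The finite search and the surviving group.} There remain finitely many products $\operatorname{H}=\operatorname{H}_{p_1}\times\operatorname{H}_{p_2}$ with $\{p_1,p_2\}\subseteq\{3,5,7,13\}$ and each $\operatorname{H}_{p_i}$ an arithmetically admissible genus-$0$ group from~\cite{SZ}; one forms each $\operatorname{H}$ via Lemma~\ref{product groups} and computes $g(\operatorname{X}_{\operatorname{H}})$ with the \texttt{GL2Genus} routine of~\cite{rouse_sutherland_zureick-brown_2022}. By Theorem~\ref{Faltings} and Corollary~\ref{HIT corollary}, any $\operatorname{H}$ with $g(\operatorname{X}_{\operatorname{H}})\ne 1$, or with $g(\operatorname{X}_{\operatorname{H}})=1$ and $\operatorname{X}_{\operatorname{H}}(\Q)$ finite, is not curious, leaving only finitely many candidates of genus $1$ and positive rank. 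For each candidate other than $\texttt{15.15.1.1}=\texttt{3.3.0.1}\times\texttt{5.5.0.1}$ one exhibits either an elliptic curve in the LMFDB realizing that mod-$N$ image, or a proper arithmetically admissible subgroup of genus $\le 1$ covering it (Lemma~\ref{noncurious subgroup critertion}), so it is not curious. For $\operatorname{H}=\texttt{15.15.1.1}$, one checks that $\operatorname{X}_{\operatorname{H}}(\Q)\cong\ZZ\times\ZZ/2\ZZ$, that no LMFDB elliptic curve has mod-$15$ image conjugate to $\operatorname{H}$, and that $\operatorname{H}$ has a unique proper arithmetically admissible subgroup $\operatorname{H}_1$ of index $3$ with $\operatorname{X}_{\operatorname{H}_1}(\Q)$ infinite, where $\operatorname{X}_{\operatorname{H}_1}$ is an elliptic curve $3$-isogenous to $\operatorname{X}_{\operatorname{H}}$. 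Applying the index-$3$ case of the argument in Section~\ref{subsection structure and methodology}---with the $j$-line map written as $\psi=\tau_{\psi(\mathcal{O})}\circ(\pm\phi)$, $\phi\colon\operatorname{X}_{\operatorname{H}_1}\to\operatorname{X}_{\operatorname{H}}$ the degree-$3$ isogeny carrying $\operatorname{X}_{\operatorname{H}_1}(\Q)$ bijectively onto $\operatorname{X}_{\operatorname{H}}(\Q)$---every $P\in\operatorname{X}_{\operatorname{H}}(\Q)$ is $\psi(P_1)$ for some $P_1\in\operatorname{X}_{\operatorname{H}_1}(\Q)$, so every $j$-invariant from $\operatorname{X}_{\operatorname{H}}(\Q)$ already comes from $\operatorname{X}_{\operatorname{H}_1}(\Q)$; hence no elliptic curve over $\Q$ has mod-$15$ image exactly $\operatorname{H}$, and $\texttt{15.15.1.1}$ is curious. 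The main obstacle is this finite genus computation, together with---for $\texttt{15.15.1.1}$---identifying $\operatorname{H}_1$ and choosing the member of the isogeny class for which the degree-$3$ isogeny is surjective on rational points.
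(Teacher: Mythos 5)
Your overall strategy is the paper's: reduce to products of Sutherland--Zywina groups at odd primes $\le 13$, compute genera, discard everything with $g\ge 2$ or with finitely many rational points via Theorem \ref{Faltings} and Corollary \ref{HIT corollary}, and analyse the genus-$1$, positive-rank survivors. Two of your steps differ in route rather than substance. The paper eliminates every pair involving $11$ (and $13$, $7$, etc.) by machine genus computations, whereas you eliminate $11$ by the multiplicativity of the index, $\nu_2$, $\nu_3$, $\nu_\infty$ for coprime levels plus the genus formula; that argument is sound (fixed-point counts multiply under the diagonal action, and cusp counts multiply because the cusp widths at the two levels are coprime). For \texttt{15.15.1.1} itself the paper simply cites \cite{Daniels2018SerresCO}, while you sketch the index-$3$ isogeny argument from Section \ref{subsection structure and methodology}; that is a legitimate alternative, but the decisive facts (uniqueness of the admissible index-$3$ subgroup with infinitely many rational points, and surjectivity of the degree-$3$ isogeny on Mordell--Weil groups) are asserted, not verified.

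The genuine gap is your criterion for disposing of the remaining genus-$1$, positive-rank candidates: you say a candidate is not curious if one exhibits ``a proper arithmetically admissible subgroup of genus $\le 1$ covering it,'' citing Lemma \ref{noncurious subgroup critertion}. That lemma states the opposite: $\operatorname{H}$ is not curious when \emph{no} proper arithmetically admissible subgroup generates a modular curve with infinitely many rational points (pigeonhole). Exhibiting a genus-$\le 1$ admissible proper subgroup proves nothing about non-curiosity --- indeed your own argument for \texttt{15.15.1.1} depends on it having exactly such a subgroup $\operatorname{H}_1$, so your stated criterion, taken literally, would ``prove'' \texttt{15.15.1.1} not curious as well. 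In the paper the actual survivors other than \texttt{15.15.1.1} --- \texttt{21.63.1.1} in type $(3,7)$ and \texttt{15.30.1.1}, \texttt{15.45.1.1} in type $(3,5)$ --- are eliminated precisely because every proper arithmetically admissible subgroup has genus $\ge 2$ (Lemma \ref{noncurious subgroup critertion}, Corollary \ref{noncurious subgroup corollary}), while \texttt{21.32.1.1} is $\operatorname{X}_{0}(21)$ of rank $0$; none is eliminated by an LMFDB example. So as written, the second branch of your dichotomy is unsound and the first branch is unsubstantiated for these groups; this step needs to be replaced by the correct form of the subgroup criterion.
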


\begin{proof}

The proof is done on a case by case basis using the genera of modular curves that are the direct product of groups from \cite{SZ}. Given coprime integers $M, N \geq 2$, and a group $\operatorname{H}_{M}$ of level $M$ from \cite{SZ} and a group $\operatorname{H}_{N}$ of level $N$ from \cite{SZ}, we take their direct product $\operatorname{H}$ and find a subgroup of $\operatorname{GL}(2, \ZZ / MN \ZZ)$ that is isomorphic to $\operatorname{H}$ using Lemma \ref{product groups}. We use the associated code from \cite{Rouse2021elladicIO} to compute the genus of modular curves. Note that all groups from \cite{SZ} of $13$-power level are subgroups of \texttt{13.14.0.1}, all groups from \cite{SZ} of $11$-power level are subgroups of \texttt{11.55.1.1}, all groups of $7$-power level are subgroups of \texttt{7.8.0.1}, \texttt{7.21.0.1}, or \texttt{7.28.0.1}, all groups of $5$-power level are subgroups of \texttt{5.5.0.1}, \texttt{5.6.0.1}, or \texttt{5.10.0.1}, and all groups of $3$-power level are subgroups of \texttt{3.3.0.1}, \texttt{3.4.0.1}, or \texttt{9.27.0.1}.

\begin{enumerate}
    \item $(11,13)$
    
    The direct product of \texttt{11.55.1.1} and \texttt{13.14.0.1} is a group of genus $\geq 2$. By Theorem \ref{Faltings}, there are no curious Galois groups of type $(11,13)$.
    
    \item $(7,13)$
    
    The direct product of any of the groups \texttt{7.8.0.1}, \texttt{7.21.0.1}, or \texttt{7.28.0.1} and the group \texttt{13.14.0.1} is a group of genus $\geq 2$. By Theorem \ref{Faltings}, there are no curious Galois groups of type $(7,13)$.

    \item $(7,11)$

    The direct product of \texttt{11.55.1.1} and any of the groups \texttt{7.8.0.1}, \texttt{7.21.0.1}, or \texttt{7.28.0.1} is a group of genus $\geq 2$. By Theorem \ref{Faltings}, there are no curious Galois groups of type $(7,11)$.
    
    \item $(5,13)$

    The direct product of any of the groups \texttt{5.5.0.1}, \texttt{5.6.0.1}, or \texttt{5.10.0.1} with the group \texttt{13.14.0.1} is a group with genus $\geq 2$. By Theorem \ref{Faltings}, there are no curious Galois groups of type $(5,13)$.
    
    \item $(5,11)$

    The direct product of any of the groups \texttt{5.5.0.1}, \texttt{5.6.0.1}, or \texttt{5.10.0.1} with \texttt{11.55.1.1} is a group of genus $\geq 2$. By Theorem \ref{Faltings}, there are no curious Galois groups of type $(5,11)$.

    \item $(5,7)$

    The direct product of any of the groups \texttt{5.5.0.1}, \texttt{5.6.0.1}, or \texttt{5.10.0.1} with any of the groups \texttt{7.8.0.1}, \texttt{7.21.0.1}, or \texttt{7.28.0.1} is a group of genus $\geq 2$. By Theorem \ref{Faltings}, there are no curious Galois groups of type $(5,7)$.

    \item $(3,13)$

    The direct product of any of the groups \texttt{3.3.0.1}, \texttt{3.4.0.1}, or \texttt{9.27.0.1} with the group \texttt{13.14.0.1} is a group of genus $\geq 2$. By Theorem \ref{Faltings}, there are no curious Galois groups of type $(3,13)$.

    \item $(3,11)$

    The direct product of any of the groups \texttt{3.3.0.1}, \texttt{3.4.0.1}, or \texttt{9.27.0.1} with the group \texttt{11.55.1.1} is a group of genus $\geq 2$. By Theorem \ref{Faltings}, there are no curious Galois groups of type $(3,11)$.

    \item $(3,7)$

    Let $\operatorname{H}_{9}$ be a Galois group of level $9$ from \cite{SZ}. Then the direct product of $\operatorname{H}_{9}$ with any of the groups \texttt{7.8.0.1}, \texttt{7.21.0.1}, or \texttt{7.28.0.1} is a group of genus $\geq 2$. 

    Let $\operatorname{H}_{3}$ be a group from \cite{SZ} of level $3$ and let $\operatorname{H}_{7}$ be a group from \cite{SZ} of level $7$. Suppose that the direct product $\operatorname{H}$ of $\operatorname{H}_{3}$ and $\operatorname{H}_{7}$ is a group of genus $1$. Then $\operatorname{H}_{3}$ is either the group \texttt{3.4.0.1} and $\operatorname{H}_{7}$ is the group \texttt{7.8.0.1} and $\operatorname{H}$ is the group \texttt{21.32.1.1}. Note that in this case, \texttt{21.32.1.1} generates the modular curve $\operatorname{X}_{0}(21)$ which is an elliptic curve of rank $0$. Or on the other hand, $\operatorname{H}_{3}$ is the group \texttt{3.3.0.1} and $\operatorname{H}_{7}$ is the group \texttt{7.21.0.1} and $\operatorname{H}$ is the group \texttt{21.63.1.1}.

Suppose that $\operatorname{H}$ is the group \texttt{21.63.1.1}. Then $\operatorname{X}_{\operatorname{H}}$ is an elliptic curve isomorphic to the elliptic curve $E/\QQ$ with label \texttt{441.d2}. Moreover, $E(\QQ)$ is isomorphic to $\ZZ \times \ZZ / 3 \ZZ$. Note that all proper, arithmetically admissible subgroups of $\operatorname{H}$ have genus $\geq 2$. By Lemma \ref{noncurious subgroup critertion}, $\operatorname{H}$ is not a curious Galois group of type $(3,7)$. Thus, there are no curious Galois groups of type $(3,7)$.

\item $(3,5)$

Let $\operatorname{H}_{9}$ be a Galois group of level $9$ from \cite{SZ}. Then the direct product of $\operatorname{H}_{9}$ with the groups \texttt{5.5.0.1}, \texttt{5.6.0.1}, and \texttt{5.10.0.1}, is a group of genus $\geq 2$. Now let $\operatorname{H}_{25}$ be a Galois group from \cite{SZ} of level $25$. Then the genus of the direct product of $\operatorname{H}_{25}$ and the groups \texttt{3.3.0.1}, \texttt{3.4.0.1}, or \texttt{9.27.0.1} is greater than or equal to $2$.

Now let $\operatorname{H}_{3}$ be a group from \cite{SZ} of level $3$ and let $\operatorname{H}_{5}$ be a group from \cite{SZ} of level $5$ such that the genus of $\operatorname{H} = \operatorname{H}_{3} \times \operatorname{H}_{5}$ is equal to $1$. Moreover, suppose that the modular curve $\operatorname{X}_{\operatorname{H}}$ generated by $\operatorname{H}$ has infinitely many rational points. Then $\operatorname{H}_{3}$ is the group \texttt{3.3.0.1} and $\operatorname{H}_{5}$ is the group \texttt{5.5.0.1}, \texttt{5.10.0.1}, or \texttt{5.15.0.1}. Then $\operatorname{H}$ is \href{https://lmfdb.org/ModularCurve/Q/15.15.1.1/}{\texttt{15.15.1.1}}, \texttt{15.30.1.1}, or \texttt{15.45.1.1} respectively. The groups \href{https://lmfdb.org/ModularCurve/Q/15.15.1.1/}{\texttt{15.15.1.1}} and \texttt{15.45.1.1} correspond to the groups \texttt{[3Nn, 5S4]} and \texttt{[3Nn, 5Ns]} found near the beginning of Section 5 in \cite{Daniels2018SerresCO} where the authors concluded that the group \href{https://lmfdb.org/ModularCurve/Q/15.15.1.1/}{\texttt{15.15.1.1}} is a curious Galois group.

Finally, we have to prove that the groups \texttt{15.30.1.1} and \texttt{15.45.1.1} are not curious Galois groups. The groups \texttt{15.30.1.1} and \texttt{15.45.1.1} generate the same elliptic curve (namely, \texttt{225.c2}) of rank $1$. Finally, by Corollary \ref{noncurious subgroup corollary}, the groups \texttt{15.30.1.1} and \texttt{15.45.1.1} are not curious.
\end{enumerate}
\end{proof}

\begin{corollary}
    Let $m$ be an integer greater than or equal to $3$ and let $p_{1} < \ldots < p_{m}$ be odd primes. Then there are no curious Galois groups of type $(p_{1}, \ldots ,p_{m})$. 
\end{corollary}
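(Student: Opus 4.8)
The plan is to bootstrap from the two-prime classification in Theorem \ref{odd theorem}, exploiting the fact that the genus of a modular curve cannot increase under the natural covering maps induced by inclusions of groups.

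Assume for contradiction that $m \geq 3$ and that $\operatorname{H} = \operatorname{H}_{p_{1}} \times \cdots \times \operatorname{H}_{p_{m}}$, with $p_{1} < \cdots < p_{m}$ odd, is a curious Galois group of type $(p_{1}, \ldots, p_{m})$. As explained at the start of Section \ref{section main work}, for each $i$ the natural projection induces a non-constant $\QQ$-morphism $\operatorname{X}_{\operatorname{H}} \to \operatorname{X}_{\operatorname{H}_{p_{i}}}$; since $\operatorname{X}_{\operatorname{H}}(\QQ)$ is infinite, so is $\operatorname{X}_{\operatorname{H}_{p_{i}}}(\QQ)$, and therefore $\operatorname{H}_{p_{i}}$ is a group from the Sutherland--Zywina database. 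In particular each $p_{i}$ lies in $\{3, 5, 7, 11, 13\}$, and, by Theorem \ref{Faltings}, the genus of $\operatorname{X}_{\operatorname{H}}$ is at most $1$.

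Next I would extract from the case analysis in the proof of Theorem \ref{odd theorem} the following statement: if $\{p, q\}$ is a pair of distinct primes in $\{3, 5, 7, 11, 13\}$ other than $\{3, 5\}$ and $\{3, 7\}$, then the direct product of any Sutherland--Zywina group of $p$-power level with any Sutherland--Zywina group of $q$-power level has genus at least $2$. Indeed, every Sutherland--Zywina group of $r$-power level is contained in one of the finitely many ``maximal'' such groups listed in that proof (\texttt{3.3.0.1}, \texttt{3.4.0.1}, \texttt{9.27.0.1} for $r=3$; \texttt{5.5.0.1}, \texttt{5.6.0.1}, \texttt{5.10.0.1} for $r=5$; \texttt{7.8.0.1}, \texttt{7.21.0.1}, \texttt{7.28.0.1} for $r=7$; \texttt{11.55.1.1} for $r=11$; \texttt{13.14.0.1} for $r=13$); the product of the smaller groups dominates the product of the corresponding maximal ones; and for each of the eight non-exceptional pairs $\{p,q\}$ the proof of Theorem \ref{odd theorem} verifies via \texttt{GL2Genus} that every product of maximal groups has genus at least $2$. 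Since both exceptional pairs $\{3,5\}$ and $\{3,7\}$ contain $3$, any subset of $\{3,5,7,11,13\}$ of cardinality at least $3$ — in particular $\{p_{1}, p_{2}, p_{3}\}$ — contains a non-exceptional pair (take any two of its elements different from $3$). Fix indices $i < j$ with $\{p_{i}, p_{j}\}$ non-exceptional.

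Finally, $\operatorname{H}_{p_{i}} \times \operatorname{H}_{p_{j}}$ is the direct product of a Sutherland--Zywina group of $p_{i}$-power level with one of $p_{j}$-power level, hence has genus at least $2$ by the previous paragraph. Since $\operatorname{H}$ is contained in the preimage in $\operatorname{GL}(2, \widehat{\ZZ})$ of $\operatorname{H}_{p_{i}} \times \operatorname{H}_{p_{j}}$, the inclusion of function fields yields a non-constant $\QQ$-morphism $\operatorname{X}_{\operatorname{H}} \to \operatorname{X}_{\operatorname{H}_{p_{i}} \times \operatorname{H}_{p_{j}}}$, and a non-constant morphism of smooth projective curves never decreases the genus of the source, so $g(\operatorname{X}_{\operatorname{H}}) \geq g(\operatorname{X}_{\operatorname{H}_{p_{i}} \times \operatorname{H}_{p_{j}}}) \geq 2$, contradicting $g(\operatorname{X}_{\operatorname{H}}) \leq 1$. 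Hence no curious Galois group of type $(p_{1}, \ldots, p_{m})$ exists when $m \geq 3$. The only points that need care are the direction of the genus inequality under these covers and the reduction to the finitely many maximal database groups; beyond that, everything is already contained in Theorem \ref{odd theorem}, so no new genus computation is required.
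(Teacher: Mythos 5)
Your proposal is correct and follows essentially the same route as the paper: both arguments observe that any three odd primes among $\{3,5,7,11,13\}$ contain a pair avoiding $3$, invoke the genus-at-least-$2$ computations from the proof of Theorem \ref{odd theorem} for that pair, and pull the resulting contradiction back to $\operatorname{H}$ along the covering $\operatorname{X}_{\operatorname{H}} \to \operatorname{X}_{\operatorname{H}_{p_{i}} \times \operatorname{H}_{p_{j}}}$. The only (harmless) difference is that the paper reduces to $m=3$ and transfers finiteness of rational points, whereas you treat general $m$ directly and transfer the genus bound via genus monotonicity, which amounts to the same thing by Theorem \ref{Faltings}.
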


\begin{proof}
    
    All that needs to be proven is the case when $m = 3$. Let $p_{1} < p_{2} < p_{3}$ be odd primes. Then $5 \leq p_{2} \leq p_{3}$. Assume by way of contradiction that $\operatorname{H}_{p_{i}}$ is a proper subgroup of $\operatorname{GL}(2, \ZZ_{p_{i}})$ for $1 \leq i \leq 3$ such that $\operatorname{H} = \operatorname{H}_{p_{1}} \times \operatorname{H}_{p_{2}} \times \operatorname{H}_{p_{3}}$ is a curious Galois group. By Theorem \ref{odd theorem} and its proof the group $\operatorname{H}_{p_{2}} \times \operatorname{H}_{p_{3}}$ generates a modular curve with finitely many rational points and thus, $\operatorname{H}$ generates a modular curve with finitely many rational points, a contradiction.
    
\end{proof}
Before we move on to the next theorem, we make a couple of definitions.

\begin{definition}
    Let $E/\QQ$ be an elliptic curve such that $E(\QQ)_{\texttt{tors}} \cong \ZZ \times \ZZ / 2 \ZZ$ (respectively, $\ZZ \times \ZZ / 2 \ZZ \times\ZZ / 2 \ZZ$) with $g_{1}$ an infinite generator of $E(\QQ)$ and $g_{2}$ a rational point on $E$ of order $2$ (respectively, $g_{3}$ another rational point of order $2$). For a point $P \in E$, let $\tau_{P} \colon E \to E$ be the translation-by-$P$ map on $E$. We say that for points $P = [A] \cdot g_{1} + [B] \cdot g_{2}$ (respectively $[A] \cdot g_{1} + [B] \cdot g_{2} + [C] \cdot g_{3}$) and $P' = [A'] \cdot g_{1} + [B'] \cdot g_{2}$ (respectively $[A'] \cdot g_{1} + [B'] \cdot g_{2} + [C'] \cdot g_{3}$) on $E$, $\tau_{P}$ and $\tau_{P'}$ are translation maps of the same type if $A \equiv A' \mod 2$ and $B \equiv B' \mod 2$ (respectively, and $C \equiv C' \mod 2$).
\end{definition}

\begin{theorem}
    Let $q$ be an odd prime number and suppose that $\operatorname{H}$ is a curious Galois group of type $(2,q)$. Then $\operatorname{H}$ is conjugate to the direct product of \texttt{8.2.0.1} and \texttt{13.14.0.1}, the direct product of \texttt{8.2.0.2} and \texttt{9.12.0.1}, \href{https://lmfdb.org/ModularCurve/Q/40.12.1.5/}{\texttt{40.12.1.5}}, \href{https://lmfdb.org/ModularCurve/Q/40.20.1.2/}{\texttt{40.20.1.2}}, \href{https://lmfdb.org/ModularCurve/Q/40.36.1.2/}{\texttt{40.36.1.2}}, \href{https://lmfdb.org/ModularCurve/Q/40.36.1.4/}{\texttt{40.36.1.4}}, \href{https://lmfdb.org/ModularCurve/Q/40.36.1.5/}{\texttt{40.36.1.5}}, \href{https://lmfdb.org/ModularCurve/Q/24.6.1.2/}{\texttt{24.6.1.2}}, \href{https://lmfdb.org/ModularCurve/Q/24.12.1.3/}{\texttt{24.12.1.3}}, \href{https://lmfdb.org/ModularCurve/Q/24.24.1.2/}{\texttt{24.24.1.2}}, \href{https://lmfdb.org/ModularCurve/Q/24.18.1.5/}{\texttt{24.18.1.5}}, \href{https://lmfdb.org/ModularCurve/Q/24.18.1.8/}{\texttt{24.18.1.8}}, \href{https://lmfdb.org/ModularCurve/Q/24.36.1.3/}{\texttt{24.36.1.3}}, or \href{https://lmfdb.org/ModularCurve/Q/24.36.1.8/}{\texttt{24.36.1.8}}
\end{theorem}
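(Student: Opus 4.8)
The plan is to carry out, for the mixed type $(2,q)$, the same genus-and-covering analysis used in the proof of Theorem \ref{odd theorem}. By Corollary \ref{genus 1 corollary}, if $\operatorname{H} = \operatorname{H}_{2} \times \operatorname{H}_{q}$ is curious then $\operatorname{X}_{\operatorname{H}}$ is an elliptic curve of positive rank; in particular the genus of $\operatorname{H}$ is $1$. As in the discussion preceding Theorem \ref{odd theorem}, since $\operatorname{X}_{\operatorname{H}}$ has infinitely many rational points and maps onto $\operatorname{X}_{\operatorname{H}_{2}}$ and $\operatorname{X}_{\operatorname{H}_{q}}$ by finite-degree rational morphisms, both $\operatorname{H}_{2}$ and $\operatorname{H}_{q}$ must be groups from the Sutherland--Zywina database \cite{SZ}; in particular $q \leq 13$, so $q \in \{3,5,7,11,13\}$, while $\operatorname{H}_{2}$ runs over the finitely many groups of $2$-power level in \cite{SZ} (equivalently, in \cite{Rouse}).

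First I would enumerate, for each such $q$, every pair $(\operatorname{H}_{2}, \operatorname{H}_{q})$ with $\operatorname{H}_{2}$ of $2$-power level and $\operatorname{H}_{q}$ of $q$-power level in \cite{SZ}, realize the product $\operatorname{H} = \operatorname{H}_{2} \times \operatorname{H}_{q}$ as a subgroup of $\operatorname{GL}(2, \ZZ / N \ZZ)$ via Lemma \ref{product groups}, and compute its genus with the \texttt{GL2Genus} routine associated to \cite{Rouse2021elladicIO}. Products of genus $\geq 2$ are discarded by Theorem \ref{Faltings}, and arithmetically admissible products of genus $0$ are discarded by Corollary \ref{HIT corollary}. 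This leaves a short explicit list of arithmetically admissible candidates of genus $1$. For each candidate I would (i) determine the rank of $\operatorname{X}_{\operatorname{H}}$, discarding those of rank $0$; (ii) search the LMFDB for an elliptic curve $E/\QQ$ with $\overline{\rho}_{E,N}(G_{\QQ})$ conjugate to $\operatorname{H}$, discarding $\operatorname{H}$ if one exists; and (iii) check whether $\operatorname{H}$ has a proper arithmetically admissible subgroup of genus $< 2$ --- if it has none, then by Lemma \ref{noncurious subgroup critertion} (cf. Corollary \ref{noncurious subgroup corollary}) only finitely many rational points of $\operatorname{X}_{\operatorname{H}}$ come from proper subgroups, so the infinitely many remaining ones force a witness curve and $\operatorname{H}$ is not curious. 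The candidates that survive all three tests should be exactly the fourteen groups listed; it remains to prove that each of these is genuinely curious.

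For that, each surviving $\operatorname{H}$ has $\operatorname{X}_{\operatorname{H}}(\QQ) \cong \ZZ \times \ZZ / 2 \ZZ$ or $\ZZ \times \ZZ / 2 \ZZ \times \ZZ / 2 \ZZ$, and carries a collection of proper arithmetically admissible subgroups $\operatorname{H}_{1}, \ldots, \operatorname{H}_{m}$ --- either $m = 1$ with $[\operatorname{H} : \operatorname{H}_{1}] = 3$, or $m \in \{2, 4\}$ with each $[\operatorname{H} : \operatorname{H}_{i}] = 2$ --- such that every $\operatorname{X}_{\operatorname{H}_{i}}$ is an elliptic curve with $\operatorname{X}_{\operatorname{H}_{i}}(\QQ) \cong \ZZ \times \ZZ / 2 \ZZ$, isogenous to $\operatorname{X}_{\operatorname{H}}$ by an isogeny $\phi_{i}$ of degree $[\operatorname{H} : \operatorname{H}_{i}]$. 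I would then run the argument from Section \ref{subsection structure and methodology}: letting $\psi_{i} \colon \operatorname{X}_{\operatorname{H}_{i}} \to \operatorname{X}_{\operatorname{H}}$ be the rational morphism compatible with the maps to the $j$-line, Lemma \ref{double cover} gives $\psi_{i} = \tau_{\psi_{i}(\mathcal{O})} \circ (\pm \phi_{i})$. In the index-$3$ case $\phi_{1}$ maps $\operatorname{X}_{\operatorname{H}_{1}}(\QQ)$ bijectively onto $\operatorname{X}_{\operatorname{H}}(\QQ)$, so for any rational $P \in \operatorname{X}_{\operatorname{H}}$ one finds a rational $P_{1} \in \operatorname{X}_{\operatorname{H}_{1}}$ with $\psi_{1}(P_{1}) = P$, whence $P$ has the same $j$-invariant as a point of $\operatorname{X}_{\operatorname{H}_{1}}$ and $\operatorname{H}$ is curious. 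In the index-$2$ cases one computes $\phi_{i}(g') = [\pm 2]g_{1} + D$ (or $\pm g_{1} + D$ when $\operatorname{X}_{\operatorname{H}}(\QQ)$ has full $2$-torsion) for an infinite-order generator $g'$ of $\operatorname{X}_{\operatorname{H}_{i}}(\QQ)$ and a $2$-torsion point $D$ of $\operatorname{X}_{\operatorname{H}}$, and the crux is to show that the translation maps $\tau_{\psi_{i}(\mathcal{O})}$, $1 \leq i \leq m$, are pairwise of different type; this makes $\bigcup_{i} \psi_{i}(\operatorname{X}_{\operatorname{H}_{i}}(\QQ))$ meet every coset of $2\operatorname{X}_{\operatorname{H}}(\QQ)$ in $\operatorname{X}_{\operatorname{H}}(\QQ)$, so every rational point of $\operatorname{X}_{\operatorname{H}}$ shares a $j$-invariant with a rational point of some $\operatorname{X}_{\operatorname{H}_{i}}$, and $\operatorname{H}$ is curious.

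The main obstacle is exactly this type-disjointness statement. I would prove it by exhibiting, for each pair $i \neq j$, an explicit elliptic curve $E_{i}/\QQ$ from the LMFDB with $\overline{\rho}_{E_{i},N}(G_{\QQ})$ conjugate to a subgroup of $\operatorname{H}_{i}$ but not of $\operatorname{H}_{j}$, and locating the corresponding rational point on $\operatorname{X}_{\operatorname{H}_{i}}$: if $\tau_{\psi_{i}(\mathcal{O})}$ and $\tau_{\psi_{j}(\mathcal{O})}$ were of the same type, then a suitable quadratic twist $E'$ of $E_{i}$ would have $\overline{\rho}_{E',N}(G_{\QQ})$ conjugate to a subgroup of $\operatorname{H}_{j}$; but quadratic twisting does not change the subgroup generated by the mod-$N$ image together with $\operatorname{-Id}$, and each $\operatorname{H}_{i}$ contains $\operatorname{-Id}$, so $E'$ would also have mod-$N$ image conjugate to a subgroup of $\operatorname{H}_{i}$, a contradiction. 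Executing this requires care in tracking the $2$-torsion of the Mordell--Weil groups $\operatorname{X}_{\operatorname{H}_{i}}(\QQ)$ and in matching LMFDB labels across the degree-$2$ isogenies, but it is a finite verification. A small extra subtlety when $q = 3$ is the situation in which $\operatorname{H}$ has a single index-$3$ subgroup of genus $1$ with $\operatorname{X}_{\operatorname{H}}(\QQ) \cong \ZZ \times \ZZ / 2 \ZZ$; this falls under the bijectivity argument above, exactly as in the corresponding paragraph of Section \ref{subsection structure and methodology}.
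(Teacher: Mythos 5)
Your proposal is correct and follows essentially the same route as the paper: enumerate the products of Sutherland--Zywina groups via Lemma \ref{product groups}, eliminate candidates by genus (Theorem \ref{Faltings}), genus-$0$ Hilbert-irreducibility (Corollary \ref{HIT corollary}), rank, LMFDB witness curves, and the pigeonhole criterion (Lemma \ref{noncurious subgroup critertion}), then establish curiosity of the survivors by the isogeny--translation-type argument of Section \ref{subsection structure and methodology} together with explicit LMFDB curves forcing the types to differ. The only deviations are cosmetic: the paper dispatches a few individual cases differently --- \texttt{40.36.1.5} via its index-$3$ inclusion into the curious group \texttt{40.12.1.5}, the direct product of \texttt{8.2.0.2} and \texttt{9.12.0.1} via a $3$-isogeny transfer to \texttt{24.24.1.2}, and \texttt{24.6.1.2}, \texttt{24.24.1.2} by citing \cite{Daniels2018SerresCO} and \cite{Chiloyan20232adicGI} --- whereas you would run the covering argument uniformly, which is a variation within the same method.
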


\begin{proof}

The proof is treated on a case by case basis. Let $M \geq 2$ be a power of $2$ and let $N \geq 3$ be a power of an odd prime number $q$. Let $\operatorname{H}_{2}$ be a group from \cite{SZ} of level $M$ and let $\operatorname{H}_{q}$ be a group from \cite{SZ} of level $N$. Let $\operatorname{H} = \operatorname{H}_{2} \times \operatorname{H}_{q}$. We use Lemma \ref{product groups} to find a subgroup of $\operatorname{GL}(2, \ZZ / MN \ZZ)$ that is isomorphic to $\operatorname{H}$ and use the code associated to \cite{rouse_sutherland_zureick-brown_2022} to compute the genus of $\operatorname{H}$. If the genus of $\operatorname{H}$ is equal to $1$, then we investigate $\operatorname{H}$ further. Note that all groups from \cite{SZ} of $2$-power level are subgroups of the groups \texttt{2.2.0.1}, \texttt{2.3.0.1}, \texttt{4.2.0.1}, \texttt{4.4.0.1}, \texttt{8.2.0.1}, or \texttt{8.2.0.2}. Note moreover that each subgroup from \cite{SZ} of level $32$ is a subgroup of a group from \cite{SZ} of level $16$.

Table \ref{Curious Galois groups} is a list with the left column being a group $\operatorname{H}$ of interest. If there is an example of an elliptic curve $E/\QQ$ from the LMFDB database such that $\overline{\rho}_{E,N}(G_{\QQ})$ is conjugate to $\operatorname{H}$, then we write the label of the elliptic curve on the right column of Table \ref{Curious Galois groups}. If we cannot find a corresponding elliptic curve but we know the group is not curious, we write \texttt{NOT CURIOUS}. Otherwise, the group is curious and we write \texttt{CURIOUS}. 

    \begin{enumerate}
        \item $(2,13)$

        Let $\operatorname{H}_{2}$ be a group from \cite{SZ} of level $2$, $4$, or $16$. Then the direct product of $\operatorname{H}_{2}$ with the group \texttt{13.14.0.1} generates a modular curve with finitely many points.

        Now let $\operatorname{H}_{2}$ be a group from \cite{SZ} of level $8$ and let $\operatorname{H}_{13}$ be a group from \cite{SZ} of level $13$. Then $\operatorname{H} = \operatorname{H}_{2} \times \operatorname{H}_{13}$ is an arithmetically admissible group with genus equal to $1$ if and only if $\operatorname{H}_{2}$ is the group \texttt{8.2.0.1} or \texttt{8.2.0.2} and $\operatorname{H}_{13}$ is the group \texttt{13.14.0.1}, \texttt{13.28.0.1}, or \texttt{13.28.0.2}. Let $\operatorname{H}_{2}$ be the group \texttt{8.2.0.2} and let $\operatorname{H}_{13}$ be the group \texttt{13.14.0.1}. Then $\operatorname{H} = \operatorname{H}_{2} \times \operatorname{H}_{13}$ generates the elliptic curve \texttt{832.e2}, which has finitely many rational points. To see this, we note that the \textit{j}-invariant of the rational points on the modular curve generated by \texttt{8.2.0.2} are of the form $2t^{2}+1728$ for some rational number $t$ (see the fifth row of page 25 in \cite{SZ}) and the \textit{j}-invariant of the rational points on the modular curve generated by \texttt{13.14.0.1} are of the form $\frac{(s^{2}+5s+13)(s^{4}+7s^{3}+20s^{2}+19s+1)^{3}}{s}$ for some rational number $s$ (see Table 2 in \cite{SZ}). Equating and homogenizing, we get an expression
        $$\frac{2t^{2}+1728z^{2}}{z^{2}} = \frac{(s^{2}+5sz+13z^{2})(s^{4}+7s^{3}z+20s^{2}z^{2}+19sz^{3}+z^{4})^{3}}{sz^{13}}.$$
After a little bit of algebra, we see that we get a curve in $\PP^{2}$ defined by the equation
$$f := sz^{11}(2t^{2}+1728z^{2}) - (s^{2}+5sz+13z^{2})(s^{4}+7sz^{3}+20s^{2}z^{2}+19sz^{3}+z^{4})^{3}$$
Using Magma \cite{magma}, one can generate the elliptic curve $E$ defined by $f$ with a specified rational point as the identity of $E$ and use the command \texttt{CremonaReference(E)} to find its Cremona reference. We see that $E$ is the elliptic curve \texttt{832.e2}, which has finitely many rational points. Hence, we are left with the groups $\operatorname{H}_{2}$ being \texttt{8.2.0.1} and $\operatorname{H}_{13}$ being \texttt{13.14.0.1}, \texttt{13.28.0.1}, or \texttt{13.28.0.2}.

        Let $E$ be the elliptic curve \texttt{11094.g2}. Then $\overline{\rho}_{E,104}(G_{\QQ})$ is conjugate to the direct product of \texttt{8.2.0.1} and \texttt{13.28.0.1}. Let $E$ be the elliptic curve \texttt{11094.g1}. Then $\overline{\rho}_{E,104}(G_{\QQ})$ is conjugate to the direct product of \texttt{8.2.0.1} and \texttt{13.28.0.2}. It remains to prove that the direct product of \texttt{8.2.0.1} and \texttt{13.14.0.1} is a curious Galois group.

        There are four proper, arithmetically admissible subgroups of the direct product of \texttt{8.2.0.1} and \texttt{13.14.0.1} of genus equal to $1$:

\begin{center}
\begin{itemize}

\item $\operatorname{H}_{1} = \left\langle [33,0,0,1],[83,78,0,1],[61,65,0,1],[30,65,13,17],[79,39,57,40] \right\rangle$, which generates an elliptic curve $\operatorname{X}_{1}$,
\item $\operatorname{H}_{2} = \left\langle [51,13,1,2],[55,91,1,2],[61,65,0,1],[49,0,0,1] \right\rangle$, which generates an elliptic curve $\operatorname{X}_{2}$,
\item $\operatorname{H}_{3} = \left\langle [33,0,0,1],[55,91,1,2],[8,91,1,2]
\right\rangle$, which generates an elliptic curve $\operatorname{X}_{3}$,
\item $\operatorname{H}_{4} = \left\langle [51,13,1,2],[83,78,0,1],[8,91,1,2],[49,0,0,1] \right\rangle$, which generates an elliptic curve $\operatorname{X}_{4}$.
\end{itemize}
\end{center}
Note that $\operatorname{H}_{1}$ is the direct product of \texttt{8.2.0.1} and \texttt{13.28.0.1} and $\operatorname{H}_{2}$ is the direct product of \texttt{8.2.0.1} and \texttt{13.28.0.2}. All four of the aforementioned groups are subgroups of $\operatorname{H}$ of index $2$. Moreover, the reduction of $\operatorname{H}_{3}$ and $\operatorname{H}_{4}$ modulo $8$ is \texttt{8.2.0.1} and the reduction modulo $13$ is \texttt{13.14.0.1}.

        Let $\operatorname{H}_{2}$ be the group \texttt{8.2.0.1} and let $\operatorname{H}_{13}$ be the group \texttt{13.14.0.1}. Then the direct product $\operatorname{H}$ of $\operatorname{H}_{2}$ and $\operatorname{H}_{13}$ generates the elliptic curve $\operatorname{X}/\QQ$ with label \texttt{832.f2}. Now let $\operatorname{H}_{13}$ be the group with label \texttt{13.28.0.1} (or \texttt{13.28.0.2}). Then the direct product of $\operatorname{H}_{2}$ with $\operatorname{H}_{13}$ generates the elliptic curve $\operatorname{X}'/\QQ$ with label \texttt{832.f1}. Note that $\operatorname{X}(\QQ) \cong \operatorname{X}'(\QQ) \cong \ZZ \times \ZZ / 2 \ZZ$. Moreover, all four of the groups $\operatorname{H}_{1}$, $\operatorname{H}_{2}$, $\operatorname{H}_{3}$, $\operatorname{H}_{4}$ generate modular curves that are isomorphic to the elliptic curve \texttt{832.f1}.
        
        Let $g_{1} = (-2,8)$, the generator of $\operatorname{X}(\QQ)$ of infinite order and let $g_{1}' = (1,3)$, the generator of $\operatorname{X}'(\QQ)$ of infinite order. Let $g_{2} = (-4,0)$, the rational point on $\operatorname{X}$ of order $2$ and let $g_{2}' = (2,0)$, the rational point on $\operatorname{X}'$ of order $2$. We have an isogeny generated by $g_{2}'$
        $$\phi \colon \operatorname{X}' \to \operatorname{X}$$
        of degree $2$ such that $\phi(g_{1}') = -2 \cdot g_{1}$. Let $\pi_{\operatorname{H}}$ and $\pi_{\operatorname{H}'}$ be maps from $\operatorname{X}$ and $\operatorname{X}'$ to the \textit{j}-line. Then there is a rational morphism $\psi \colon \operatorname{X}' \to \operatorname{X}$ of degree $2$ such that $\pi_{\operatorname{H}'} = \psi \circ \pi_{\operatorname{H}}$. By Lemma \ref{double cover}, $\psi = \tau_{\psi(\mathcal{O})} \circ \phi$ or $\psi = \tau_{\psi(\mathcal{O})} \circ -\phi$. Note that as $\psi$ is a rational morphism defined over $\QQ$, $\psi(\mathcal{O})$ is a rational point on $\operatorname{X}$.
        
        There are four types of elements that $\psi(\mathcal{O}) = [A] \cdot g_{1} + [B] \cdot g_{2}$ can be : $A$ is even and $B = 0$, $A$ is even and $B = 1$, $A$ is odd and $B = 0$, or $A$ is odd and $B = 1$. Let $E/\QQ$ be the elliptic curve \texttt{20736.a2}. Then $\overline{\rho}_{E,104}(G_{\QQ})$ is conjugate to $\operatorname{H}_{3}$. Let $E/\QQ$ be the elliptic curve \texttt{20736.a1}. Then $\overline{\rho}_{E,104}(G_{\QQ})$ is conjugate to $\operatorname{H}_{4}$. Thus, we have examples of elliptic curves such that the image of the mod-$104$ Galois representation is conjugate to each of the groups $\operatorname{H}_{1}$, $\operatorname{H}_{2}$, $\operatorname{H}_{3}$, and $\operatorname{H}_{4}$. For $i,j \in \left\{1,2,3,4\right\}$, $\operatorname{H}_{i}$ is conjugate to $\operatorname{H}_{j}$ if and only if $i = j$. The rational morphism $\psi_{i} \colon \operatorname{X}_{i} \to \operatorname{X}$ is of the form $\tau_{D_{i}} \circ [M_{i}] \circ \phi$ where $[M_{i}]$ is the identity map or the inversion map and $D_{i} = [A_{i}] \cdot g_{1} + [B_{i}] \cdot g_{2}$ for some integer $A_{i}$ and $B_{i} = 0$ or $1$. As we have found elliptic curves $E_{i}$ such that $\overline{\rho}_{E,104}(G_{\QQ})$ is conjugate to a subgroup of $\operatorname{H}_{i}$ but not to a subgroup of $\operatorname{H}_{j}$ when $i \neq j$, for all $i,j \in \left\{1,2,3,4\right\}$ we have that $A_{i} \not \equiv A_{j} \bmod 2$ or $B_{i} \not \equiv B_{j} \bmod 2$ when $i \neq j$. Thus, $\operatorname{H}$ is a curious Galois group.

\item $(2,11)$

Let $\operatorname{H}_{2}$ be the group \texttt{2.2.0.1}, \texttt{2.3.0.1}, \texttt{4.2.0.1}, \texttt{4.4.0.1}, \texttt{8.2.0.1}, or \texttt{8.2.0.2}. Then the direct product of $\operatorname{H}_{2}$ with \texttt{11.55.1.1} is a group of genus $\geq 2$. By Theorem \ref{Faltings}, there are no curious Galois groups of type $(2,11)$.
        
        \item $(2,7)$

        Let $\operatorname{H}_{2}$ be a group from \cite{SZ} of $2$-power level (greater than $1$) and let $\operatorname{H}_{7}$ be a group from \cite{SZ} of level $7$. Suppose $\operatorname{H} = \operatorname{H}_{2} \times \operatorname{H}_{7}$ is an arithmetically admissible group of genus $0$ or $1$. Suppose moreover that $\operatorname{H}$ generates a modular curve with infinitely many points defined over $\QQ$. Then the level of $\operatorname{H}$ is equal to $14$, $28$, or $56$. More specifically, $\operatorname{H}$ is the group \texttt{14.16.0.1}, \texttt{28.16.0.1}, \texttt{56.16.0.11}, or \texttt{56.16.0.12}; none of which are curious due to genus considerations.

        \item $(2,5)$

        Let $\operatorname{H}_{2}$ be a group from \cite{SZ} of $2$-power level (greater than $1$) and let $\operatorname{H}_{5}$ be a group from \cite{SZ} of level $25$. Then the direct product of $\operatorname{H}_{2}$ and $\operatorname{H}_{5}$ is a group of genus $\geq 2$. Next, if $\operatorname{H}_{2}$ is a group from \cite{SZ} of level $16$ and $\operatorname{H}_{5}$ is a group from \cite{SZ} of $5$-power level, then the direct product of $\operatorname{H}_{2}$ and $\operatorname{H}_{5}$ is a group of genus $\geq 2$. We move on to the case that $\operatorname{H}_{2}$ is a group from \cite{SZ} of level $2$, $4$, or $8$ and $\operatorname{H}_{5}$ is a group from \cite{SZ} of level $5$. We look up all groups from the LMFDB that are direct products of groups of level $2$, $4$, or $8$, and $5$, of genus $1$ with positive rank or genus $0$.

There are five curious Galois groups of type $(2,5)$. We investigate them below.

    \begin{itemize}

        \item \href{https://lmfdb.org/ModularCurve/Q/40.12.1.5/}{\texttt{40.12.1.5}}, the direct product of \texttt{8.2.0.1} and \texttt{5.6.0.1}, which generates the elliptic curve $\operatorname{X}/\QQ$ with label \texttt{320.a.2}. Note that $\operatorname{X}(\QQ) \cong \ZZ \times \ZZ / 2 \ZZ$. The group \href{https://lmfdb.org/ModularCurve/Q/40.12.1.5/}{\texttt{40.12.1.5}} contains four arithmetically admissible subgroups of index $2$, each of which generates an elliptic curve $\operatorname{X}'/\QQ$ isomorphic to \texttt{320.a1}. Moreover, $\operatorname{X}'(\QQ) \cong \ZZ \times \ZZ / 2 \ZZ$. 
        
        Let $g_{1} = (-5,40)$ be a generator of $\operatorname{X}(\QQ)$ of infinite order and let $g'_{1} = (11,-20)$ be a generator of $\operatorname{X}'(\QQ)$ of infinite order. Let $g_{2} = (-15,0)$ be the rational point on $\operatorname{X}$ of order $2$ and let $g_{2}' = (7,0)$ be the rational point on $E'$ of order $2$. There is an isogeny
        $$\phi \colon \operatorname{X}' \to \operatorname{X}$$
    generated by the rational point on $\operatorname{X}'$ of order $2$ such that $\phi(g_{1}') = 2 \cdot g_{1}$. 
                
        \begin{enumerate}
        \item $\operatorname{H}_{1} = \texttt{40.24.1.17}$ is a subgroup of \href{https://lmfdb.org/ModularCurve/Q/40.12.1.5/}{\texttt{40.12.1.5}} of index $2$ which is the direct product of \texttt{8.2.0.1} and \texttt{5.12.0.1} and generates an elliptic curve $\operatorname{X}_{1}$.
        \item $\operatorname{H}_{2} = \texttt{40.24.1.22}$ is a subgroup of \href{https://lmfdb.org/ModularCurve/Q/40.12.1.5/}{\texttt{40.12.1.5}} of index $2$ which is the direct product of \texttt{8.2.0.1} and \texttt{5.12.0.2} and generates an elliptic curve $\operatorname{X}_{2}$.
        \item $\operatorname{H}_{3} = \texttt{40.24.1.132}$ is a subgroup of \href{https://lmfdb.org/ModularCurve/Q/40.12.1.5/}{\texttt{40.12.1.5}} of index $2$ which generates the elliptic curve $\operatorname{X}_{3}$.
        The reduction of \texttt{40.24.1.137} modulo $5$ is conjugate to \texttt{5.6.0.1} and the reduction of \texttt{40.24.1.132} modulo $8$ is conjugate to \texttt{8.2.0.1}
        \item $\operatorname{H}_{4} = \texttt{40.24.1.137}$ is a subgroup of \href{https://lmfdb.org/ModularCurve/Q/40.12.1.5/}{\texttt{40.12.1.5}} of index $2$  which generates an elliptic curve $\operatorname{X}_{4}$.
        The reduction of \texttt{40.24.1.137} modulo $5$ is conjugate to \texttt{5.6.0.1} and the reduction of \texttt{40.24.1.137} modulo $8$ is conjugate to \texttt{8.2.0.1}

\end{enumerate}

Let $r,s \in \left\{1,2,3,4\right\}$. Then $\operatorname{H}_{r}$ is conjugate to $\operatorname{H}_{s}$ if and only if $r = s$. Let $E/\QQ$ be the elliptic curve \texttt{450.c3}. Then $\overline{\rho}_{E,40}(G_{\QQ})$ is conjugate to $\operatorname{H}_{1}$. Let $E/\QQ$ be the elliptic curve \texttt{450.c1}. Then $\overline{\rho}_{E,40}(G_{\QQ})$ is conjugate to $\operatorname{H}_{2}$. Let $E/\QQ$ be the elliptic curve \texttt{6400.d2}. Then $\overline{\rho}_{E,40}(G_{\QQ})$ is conjugate to $\operatorname{H}_{3}$. Let $E/\QQ$ be the elliptic curve \texttt{6400.d1}. Then $\overline{\rho}_{E,40}(G_{\QQ})$ is conjugate to $\operatorname{H}_{4}$. 

The rational points on $\operatorname{X}$ are of the form $[A] \cdot g_{1} + [B] \cdot g_{2}$ where $A$ is an integer and $B = 0$ or $1$. We partition $S = \operatorname{X}(\QQ)$ into four subsets: $S_{1}$ the subset of $S$ where $A$ is even and $B = 0$, $S_{2}$ the subset of $S$ where $A$ is even and $B = 1$, $S_{3}$ the subset of $S$ where $A$ is odd and $B = 0$, and $S_{4}$ the subset of $S$ where $A$ is odd and $B = 1$. 

For $i \in \left\{1,2,3,4\right\}$, the rational morphism $\psi_{i} \colon \operatorname{X}_{i} \to \operatorname{X}$ is of the form $\tau_{D_{i}} \circ [M_{i}] \circ \phi$ where $[M_{i}]$ is the identity map or the inversion map and $D_{i} = [A_{i}] \cdot g_{1} + [B_{i}] \cdot g_{2}$ for some integer $A_{i}$ and $B_{i} = 0$ or $1$. As we have found elliptic curves $E_{i}$ such that $\overline{\rho}_{E,40}(G_{\QQ})$ is conjugate to a subgroup of $\operatorname{H}_{i}$ but not to a subgroup of $\operatorname{H}_{j}$ when $i \neq j$, for all $i,j \in \left\{1,2,3,4\right\}$ we have that $A_{i} \not \equiv A_{j} \bmod 2$ or $B_{i} \not \equiv B_{j} \bmod 2$ when $i \neq j$. Thus, \href{https://lmfdb.org/ModularCurve/Q/40.12.1.5/}{\texttt{40.12.1.5}} is a curious Galois group.

        \item \href{https://lmfdb.org/ModularCurve/Q/40.20.1.2/}{\texttt{40.20.1.2}}, the direct product of \texttt{8.2.0.2} and \texttt{5.10.0.1}, which generates the elliptic curve $\operatorname{X}/\QQ$ with label \texttt{1600.c4}. Note that $\operatorname{X}(\QQ) \cong \ZZ \times \ZZ / 2 \ZZ$. The group \href{https://lmfdb.org/ModularCurve/Q/40.20.1.2/}{\texttt{40.20.1.2}} contains four arithmetically admissible subgroups of index $2$, two of which generate an elliptic curve $\operatorname{X}'/\QQ$ isormophic to \texttt{1600.c3}. Note that $\operatorname{X}'(\QQ) \cong \ZZ \times \ZZ / 2 \ZZ$. We list the four subgroups of index $2$ below (but only make use of the two that generate elliptic curves).
        
        Let $g_{1} = (18,-125)$ be a generator of $\operatorname{X}(\QQ)$ of infinite order and let $g_{1}' = (-2,25)$ be a generator of $\operatorname{X}'(\QQ)$ of infinite order. Let $g_{2} = (-7,0)$ be the rational point on $\operatorname{X}$ of order $2$ and let $g_{2}' = (3,0)$ be the rational point on $\operatorname{X}'$ of order $2$. There is an isogeny
        $$\phi \colon \operatorname{X}' \to \operatorname{X}$$
        generated by $g_{2}'$ such that $\phi(g_{1}') = -g_{1}$. 

        \begin{enumerate}
        \item $\operatorname{H}_{1} = \texttt{40.40.1.30}$ is a subgroup of \href{https://lmfdb.org/ModularCurve/Q/40.20.1.2/}{\texttt{40.20.1.2}} of index $2$ which generates an elliptic curve $\operatorname{X}_{1}$.
        The reduction of \texttt{40.40.1.30} modulo $5$ is conjugate to \texttt{5.10.0.1} and the reduction modulo $8$ is conjugate to \texttt{8.2.0.2}
        \item $\operatorname{H}_{2} = \texttt{40.40.1.35}$ is a subgroup of subgroup of \href{https://lmfdb.org/ModularCurve/Q/40.20.1.2/}{\texttt{40.20.1.2}} of index $2$ which generates an elliptic curve $\operatorname{X}_{2}$.
        The reduction of \texttt{40.40.1.35} modulo $5$ is conjugate to \texttt{5.10.0.1} and the reduction modulo $8$ is conjugate to \texttt{8.2.0.2}
        \item \texttt{40.40.1.39} is a subgroup of \href{https://lmfdb.org/ModularCurve/Q/40.20.1.2/}{\texttt{40.20.1.2}} of index $2$. The modular curve generated by \texttt{40.40.1.39} does not have any points defined over $\QQ_{2}$.
        \item \texttt{40.40.1.42} is a subgroup of \href{https://lmfdb.org/ModularCurve/Q/40.20.1.2/}{\texttt{40.20.1.2}} of index $2$. The modular curve generated by \texttt{40.40.1.42} does not have any points defined over $\QQ_{2}$.
\end{enumerate}
Note that $\operatorname{H}_{1}$ is not conjugate to $\operatorname{H}_{2}$. Note that both \texttt{40.40.1.30} and \texttt{40.40.1.35} are not curious as they do not contain proper subgroups that are arithmetically admissible with genus less than $2$.

The rational points on $\operatorname{X}$ are of the form $[A] \cdot g_{1} + [B] \cdot g_{2}$ where $A$ is an integer and $B = 0$ or $1$. We partition $S = \operatorname{X}(\QQ)$ into two subsets: $S_{1}$ the subset of $S$ where $B = 0$ and $S_{2}$ the subset of $S$ where $B = 1$. Let $i = 1$ or $2$. The rational morphism $\psi_{i} \colon \operatorname{X}_{i} \to \operatorname{X}$ is of the form $\tau_{D_{i}} \circ [M_{i}] \circ \phi$ where $[M_{i}]$ is the identity map or the inversion map and $D_{i} = [A_{i}] \cdot g_{1} + [B_{i}] \cdot g_{2}$ for some integer $A_{i}$ and $B_{i} = 0$ or $1$. As there are elliptic curves $E_{i}$ such that $\overline{\rho}_{E_{i},40}(G_{\QQ})$ is conjugate to a subgroup of $\operatorname{H}_{i}$ but not to a subgroup of $\operatorname{H}_{j}$ when $i \neq j$, for all $i,j \in \left\{1,2\right\}$ we have that $B_{i} \not \equiv B_{j} \bmod 2$ when $i \neq j$. Thus, \href{https://lmfdb.org/ModularCurve/Q/40.20.1.2/}{\texttt{40.20.1.2}} is a curious Galois group.

        \item \href{https://lmfdb.org/ModularCurve/Q/40.36.1.2/}{\texttt{40.36.1.2}}, the direct product of \texttt{8.6.0.2} and \texttt{5.6.0.1}, which generates the elliptic curve $\operatorname{X}/\QQ$ with label \texttt{320.c2}. Note that $\operatorname{X}(\QQ) \cong \ZZ \times \ZZ / 2 \ZZ \times \ZZ / 2 \ZZ$. The group \href{https://lmfdb.org/ModularCurve/Q/40.36.1.2/}{\texttt{40.36.1.2}} contains four arithmetically admissible subgroups of index $2$, each of which generate an elliptic curve $\operatorname{X}'/\QQ$ isomorphic to \texttt{320.c3}. Note that $\operatorname{X}'(\QQ) \cong \ZZ \times \ZZ / 2 \ZZ$. We list the four subgroups of index $2$ below.

        Let $g_{1} = (-3,3)$ be a generator of $\operatorname{X}(\QQ)$ of infinite order and let $g_{1}' = (1,-1)$ be a generator of $\operatorname{X}'(\QQ)$ of infinite order. Let $g_{2} = (6,0)$ and let $g_{3} = (-4,0)$ be rational points on $\operatorname{X}$ of order $2$ and let $g_{2}' = (2,0)$ be the rational point on $\operatorname{X}'$ of order $2$. There is an isogeny
        $$\phi \colon \operatorname{X}' \to \operatorname{X}$$
        generated by $g_{2}'$ such that $\phi(g_{1}') = g_{1}$.

        The rational points on $\operatorname{X}$ are of the form $[A] \cdot g_{1} + [B] \cdot g_{2} + [C] \cdot g_{3}$ where $A$ is and integer and $B, C = 0$ or $1$. We partition $S = \operatorname{X}(\QQ)$ into four subsets: $S_{1}$ the subset of $S$ where $B = C = 0$, $S_{2}$ the subset of $S$ where $B = 1$ and $C = 0$, $S_{3}$ the subset of $S$ where $B = 0$ and $C = 1$, and $S_{4}$ the subset of $S$ where $B = C = 1$.

\begin{enumerate}
        \item $\operatorname{H}_{1} = \texttt{40.72.1.35}$ is a subgroup of \href{https://lmfdb.org/ModularCurve/Q/40.36.1.2/}{\texttt{40.36.1.2}} of index $2$, which generates the elliptic curve $\operatorname{X}_{1}$. The reduction of \texttt{40.72.1.35} modulo $8$ is conjugate to \texttt{8.6.0.2} and modulo $5$ is conjugate to \texttt{5.6.0.1}.
        \item $\operatorname{H}_{2} = \texttt{40.72.1.44}$ is a subgroup of \href{https://lmfdb.org/ModularCurve/Q/40.36.1.2/}{\texttt{40.36.1.2}} of index $2$ which generates the elliptic curve $\operatorname{X}_{2}$. The reduction of \texttt{40.72.1.44} modulo $8$ is conjugate to \texttt{8.6.0.2} and modulo $5$ is conjugate to \texttt{5.6.0.1}.
        \item $\operatorname{H}_{3} = \texttt{40.72.1.67}$ is a subgroup of \href{https://lmfdb.org/ModularCurve/Q/40.36.1.2/}{\texttt{40.36.1.2}} of index $2$ which generates the elliptic curve $\operatorname{X}_{3}$. The reduction of \texttt{40.72.1.67} modulo $8$ is conjugate to \texttt{8.6.0.2} and modulo $5$ is conjugate to \texttt{5.6.0.1}.
        \item $\operatorname{H}_{4} = \texttt{40.72.1.76}$ is a subgroup of \href{https://lmfdb.org/ModularCurve/Q/40.36.1.2/}{\texttt{40.36.1.2}} of index $2$, which generates the elliptic curve $\operatorname{X}_{4}$. The reduction of \texttt{40.72.1.76} modulo $8$ is conjugate to \texttt{8.6.0.2} and modulo $5$ is conjugate to \texttt{5.6.0.1}.
        
    \end{enumerate}
    For $r,s \in \left\{1,2,3,4\right\}$, $\operatorname{H}_{r}$ is conjugate to $\operatorname{H}_{s}$ if and only if $r = s$. Let $E_{1}/\QQ$ be the elliptic curve \texttt{1734.d3}. Then $\overline{\rho}_{E_{1},40}(G_{\QQ})$ is conjugate to $\operatorname{H}_{1}$. Let $E_{2}/\QQ$ be the elliptic curve \texttt{1734.d1}. Then $\overline{\rho}_{E_{2},40}(G_{\QQ})$ is conjugate to $\operatorname{H}_{2}$. Let $E_{3}/\QQ$ be the elliptic curve \texttt{1734.d2}. Then $\overline{\rho}_{E_{3},40}(G_{\QQ})$ is conjugate to $\operatorname{H}_{3}$. Let $E_{4}/\QQ$ be the elliptic curve \texttt{1734.d4}. Then $\overline{\rho}_{E_{4},40}(G_{\QQ})$ is conjugate to $\operatorname{H}_{4}$.
    
    Let $i \in \left\{1,2,3,4\right\}$. The rational morphism $\psi_{i} \colon \operatorname{X}_{i} \to \operatorname{X}$ is of the form $\tau_{D_{i}} \circ [M_{i}] \circ \phi$ where $[M_{i}]$ is the identity map or the inversion map and $D_{i} = [A_{i}] \cdot g_{1} + [B_{i}] \cdot g_{2}$ for some integer $A_{i}$ and $B_{i} = 0$ or $1$. As we have found elliptic curves $E_{i}$ such that $\overline{\rho}_{E,40}(G_{\QQ})$ is conjugate to a subgroup of $\operatorname{H}_{i}$ but not to a subgroup of $\operatorname{H}_{j}$ when $i \neq j$, for all $i,j \in \left\{1,2,3,4\right\}$ we have that $A_{i} \not \equiv A_{j} \bmod 2$ or $B_{i} \not \equiv B_{j} \bmod 2$ when $i \neq j$. Thus, \href{https://lmfdb.org/ModularCurve/Q/40.36.1.2/}{\texttt{40.36.1.2}} is a curious Galois group.

        \item \href{https://lmfdb.org/ModularCurve/Q/40.36.1.4/}{\texttt{40.36.1.4}}, the direct product of \texttt{8.6.0.4} and \texttt{5.6.0.1}, which generates the elliptic curve $\operatorname{X}/\QQ$ with label \texttt{320.a4}. Note that $\operatorname{X}(\QQ) \cong \ZZ \times \ZZ / 2 \ZZ$. The group \href{https://lmfdb.org/ModularCurve/Q/40.36.1.4/}{\texttt{40.36.1.4}} contains four arithmetically admissible subgroups of index $2$, each of which generate the elliptic curve $\operatorname{X}'/\QQ$ isomorphic to \texttt{320.a3}. Note that $\operatorname{X}'(\QQ) \cong \ZZ \times \ZZ / 2 \ZZ$. We list the four subgroups of index $2$ below.

         Let $g = (3,-8)$ be a generator of $\operatorname{X}(\QQ)$ of infinite order and let $g' = (-2,1)$ be a generator of $\operatorname{X}'(\QQ)$ of infinite order. Let $g_{2} = (1,0)$ be the rational point on $\operatorname{X}$ of order $2$ and let $g_{2}' = (-1,0)$ be the rational point on $\operatorname{X}'$ of order $2$. There is an isogeny
        $$\phi \colon \operatorname{X}' \to \operatorname{X}$$
        generated by the rational point on $\operatorname{X}'$ of order $2$ such that $\phi(g') = 2 \cdot g$.

\begin{enumerate}
        \item $\operatorname{H}_{1} = \texttt{40.72.1.22}$ is a subgroup of \href{https://lmfdb.org/ModularCurve/Q/40.36.1.4/}{\texttt{40.36.1.4}} of index $2$, which is the direct product of \texttt{8.6.0.4} and \texttt{5.12.0.1} and generates an elliptic curve $\operatorname{X}_{1}$,
        \item $\operatorname{H}_{2} = \texttt{40.72.1.29}$ is a subgroup of \href{https://lmfdb.org/ModularCurve/Q/40.36.1.4/}{\texttt{40.36.1.4}} of index $2$, which is the direct product of \texttt{8.6.0.4} and \texttt{5.12.0.2} and generates an elliptic curve $\operatorname{X}_{2}$,
        \item $\operatorname{H}_{3} = \texttt{40.72.1.82}$ is a subgroup of \href{https://lmfdb.org/ModularCurve/Q/40.36.1.4/}{\texttt{40.36.1.4}} of index $2$ which generates an elliptic curve $\operatorname{X}_{3}$. The reduction of \texttt{40.72.1.82} modulo $8$ is conjugate to \texttt{8.6.0.4} and the reduction modulo $5$ is conjugate to \texttt{5.6.0.1},
        \item $\operatorname{H}_{4} = \texttt{40.72.1.89}$ is a subgroup of \href{https://lmfdb.org/ModularCurve/Q/40.36.1.4/}{\texttt{40.36.1.4}} of index $2$ which generates an elliptic curve $\operatorname{X}_{4}$. The reduction of \texttt{40.72.1.89} modulo $8$ is conjugate to \texttt{8.6.0.4} and the reduction modulo $5$ is conjugate to \texttt{5.6.0.1}.
    \end{enumerate}
    Let $r,s \in \left\{1,2,3,4\right\}$. Then $\operatorname{H}_{r}$ is conjugate to $\operatorname{H}_{s}$ if and only if $r = s$. Let $E_{1}/\QQ$ be the elliptic curve \texttt{198.c1}. Then $\overline{\rho}_{E_{1},40}(G_{\QQ})$ is conjugate to $\operatorname{H}_{1}$. Let $E_{2}/\QQ$ be the elliptic curve \texttt{198.c3}. Then $\overline{\rho}_{E_{2},40}(G_{\QQ})$ is conjugate to $\operatorname{H}_{2}$. Let $E_{3}/\QQ$ be the elliptic curve \texttt{768.a1}. Then $\overline{\rho}_{E_{3},40}(G_{\QQ})$ is conjugate to $\operatorname{H}_{3}$. Let $E_{4}/\QQ$ be the elliptic curve \texttt{768.a3}. Then $\overline{\rho}_{E_{4},40}(G_{\QQ})$ is conjugate to $\operatorname{H}_{4}$.
    
    The rational points on $\operatorname{X}$ are of the form $[A] \cdot g_{1} + [B] \cdot g_{2}$ where $A$ is an integer and $B = 0$ or $1$. We partition $S = \operatorname{X}(\QQ)$ into four subsets: $S_{1}$ the subset of $S$ where $A$ is even and $B = 0$, $S_{2}$ the subset of $S$ where $A$ is even and $B = 1$, $S_{3}$ the subset of $S$ where $A$ is odd and $B = 0$, and $S_{4}$ the subset of $S$ where $A$ is odd and $B = 1$. 
    
    The rational morphism $\psi_{i} \colon \operatorname{X}_{i} \to \operatorname{X}$ is of the form $\tau_{D_{i}} \circ [M_{i}] \circ \phi$ where $[M_{i}]$ is the identity map or the inversion map and $D_{i} = [A_{i}] \cdot g_{1} + [B_{i}] \cdot g_{2}$ for some integer $A_{i}$ and $B_{i} = 0$ or $1$. As we have found elliptic curves $E_{i}$ such that $\overline{\rho}_{E,40}(G_{\QQ})$ is conjugate to a subgroup of $\operatorname{H}_{i}$ but not to a subgroup of $\operatorname{H}_{j}$ when $i \neq j$, for all $i,j \in \left\{1,2,3,4\right\}$ we have that $A_{i} \not \equiv A_{j} \bmod 2$ or $B_{i} \not \equiv B_{j} \bmod 2$ when $i \neq j$. Thus, $\operatorname{H}$ is a curious Galois group. Thus, \href{https://lmfdb.org/ModularCurve/Q/40.36.1.4/}{\texttt{40.36.1.4}} is a curious Galois group.

        \item \href{https://lmfdb.org/ModularCurve/Q/40.36.1.5/}{\texttt{40.36.1.5}}, the direct product of \texttt{8.6.0.5} and \texttt{5.6.0.1}.

Note that \href{https://lmfdb.org/ModularCurve/Q/40.36.1.5/}{\texttt{40.36.1.5}} is a subgroup of \href{https://lmfdb.org/ModularCurve/Q/40.12.1.5/}{\texttt{40.12.1.5}} of index $3$. Let $\operatorname{X}'$ be the modular curve generated by \href{https://lmfdb.org/ModularCurve/Q/40.36.1.5/}{\texttt{40.36.1.5}}, let $\operatorname{X}$ be the modular curve generated by \href{https://lmfdb.org/ModularCurve/Q/40.12.1.5/}{\texttt{40.12.1.5}}, and let $\operatorname{X}_{1}$, $\operatorname{X}_{2}$, $\operatorname{X}_{3}$, and $\operatorname{X}_{4}$, be the modular curves generated by the four subgroups of \href{https://lmfdb.org/ModularCurve/Q/40.12.1.5/}{\texttt{40.12.1.5}} of index $2$ listed above. Let $P$ be a point on $\operatorname{X}'$ defined over $\QQ$. Then $P$ is a point on $\operatorname{X}$ defined over $\QQ$ and hence, $P$ is a point on $\operatorname{X}_{1}$, $\operatorname{X}_{2}$, $\operatorname{X}_{3}$, or $\operatorname{X}_{4}$ defined over $\QQ$. As $3$ is not even, this means that $P$ cannot correspond to an elliptic curve $E/\QQ$ such that $\overline{\rho}_{E,40}(G_{\QQ})$ is conjugate to \href{https://lmfdb.org/ModularCurve/Q/40.36.1.5/}{\texttt{40.36.1.5}} precisely. Hence, \href{https://lmfdb.org/ModularCurve/Q/40.36.1.5/}{\texttt{40.36.1.5}} is a curious Galois group.

    \end{itemize}
    
        \item $(2,3)$

        Let $\operatorname{H}_{2}$ be a group of $2$-power level and let $\operatorname{H}_{3}$ be a group of $3$-power level such that $\operatorname{H} = \operatorname{H}_{2} \times \operatorname{H}_{3}$ is a curious Galois group whose level is divisible by $6$. By genera computations using the code associated to \cite{rouse_sutherland_zureick-brown_2022}, the level of $\operatorname{H}_{2}$ is at most $16$ and the level of $\operatorname{H}_{3}$ is at most $9$. Moreover, $\operatorname{H}$ is not a group of level $16 \cdot 9$.

        We investigate the curious Galois groups that are direct products of groups of level $8$ and $9$ from \cite{SZ}. The candidates of level $8 \cdot 9$ are the groups that are the direct product of the following pairs of groups.

        \begin{itemize}
                    \item The direct product of \texttt{8.2.0.1} and \texttt{9.12.0.2}; genus $0$

                    By Corollary \ref{genus 1 corollary}, the direct product of \texttt{8.2.0.1} and \texttt{9.12.0.2} is not a curious Galois group.
            \item The direct product of \texttt{8.2.0.2} and \texttt{9.12.0.2}; genus $0$

            By Corollary \ref{genus 1 corollary}, the direct product of \texttt{8.2.0.2} and \texttt{9.12.0.2} is not a curious Galois group.

            \item The direct product of \texttt{8.2.0.1} and \texttt{9.12.0.1}; genus $1$ and rank $0$
            
            Note that the direct product of the groups \texttt{8.2.0.1} and \texttt{9.12.0.1} generates the elliptic curve \texttt{576.f4} which is an elliptic curve of rank $0$. Hence, the direct product of the groups \texttt{8.2.0.1} and \texttt{9.12.0.1} is not a curious Galois group.

            \item The direct product of \texttt{8.6.0.6} and \texttt{9.12.0.1}; genus $1$ and rank $1$

            Let $\operatorname{H}$ be the direct product of the groups \texttt{8.6.0.6} and \texttt{9.12.0.1}. Then $\operatorname{H}$ generates a modular curve isomorphic to \texttt{576.e4}. Note that \texttt{576.e4} has rank $1$. Note that there is no proper, arithmetically admissible subgroup of $\operatorname{H}$ of genus less than $2$. By a pigeonhole principle argument, $\operatorname{H}$ is not a curious Galois group.

            \item The direct product of \texttt{8.2.0.2} and \texttt{9.12.0.1}; genus $1$ and rank $1$

            Let $\operatorname{H}$ denote the direct product of \texttt{8.2.0.2} and \texttt{9.12.0.1} and let $\operatorname{H}'$ denote the direct product of \texttt{8.2.0.2} and \texttt{3.12.0.1} (in other words, the group \href{https://lmfdb.org/ModularCurve/Q/24.24.1.2/}{\texttt{24.24.1.2}}). Note that \texttt{9.12.0.1} is the full Borel group modulo $9$ and \texttt{3.12.0.1} is the full split Cartan group modulo $3$. Let $E/\QQ$ be an elliptic curve. Then $\overline{\rho}_{E,72}(G_{\QQ})$ is conjugate to $\operatorname{H}$ if and only if $E$ is $3$-isogenous to an elliptic curve $E'/\QQ$ such that $\overline{\rho}_{E',24}(G_{\QQ})$ is conjugate to \href{https://lmfdb.org/ModularCurve/Q/24.24.1.2/}{\texttt{24.24.1.2}}. In other words, $\operatorname{H}$ is a curious Galois group if and only if \href{https://lmfdb.org/ModularCurve/Q/24.24.1.2/}{\texttt{24.24.1.2}} is a curious Galois group. As \href{https://lmfdb.org/ModularCurve/Q/24.24.1.2/}{\texttt{24.24.1.2}} was proven to be a curious Galois group in \cite{Chiloyan20232adicGI}, the direct product of \texttt{8.2.0.2} and \texttt{9.12.0.1} is a curious Galois group. 

            \item The direct product of \texttt{8.6.0.1} and \texttt{9.12.0.1}; genus $1$ and rank $1$

            Let $\operatorname{H}$ be the direct product of \texttt{8.6.0.1} and \texttt{9.12.0.1}. Then $\operatorname{H}$ generates a modular curve that is isomorphic to the elliptic curve \texttt{576.e4} which is rank $1$. Note that there is no proper, arithmetically admissible subgroup of $\operatorname{H}$ of genus less than $2$. By a pigeonhole principle argument, $\operatorname{H}$ is not a curious Galois group.

            \item The direct product of \texttt{8.6.0.4} and \texttt{9.12.0.1}; genus $1$ and rank $0$

            Let $\operatorname{H}$ denote the direct product of \texttt{8.6.0.4} and \texttt{9.12.0.1}. Then $\operatorname{H}$ generates a modular curve that is isomorphic to \texttt{576.f3}. The rank of \texttt{576.f3} is equal to $0$. Hence, $\operatorname{H}$ is not a curious Galois group.

            \item The direct product of \texttt{8.6.0.5} and \texttt{9.12.0.1}; genus $1$ and rank $0$

            The group \texttt{8.6.0.5} is a subgroup of the group \texttt{8.2.0.1}. By the fact that the direct product of \texttt{8.2.0.1} and \texttt{9.12.0.1} generates a modular curve with finitely many points, so does the direct product of \texttt{8.6.0.5} and \texttt{9.12.0.1}. Hence, the direct product of \texttt{8.6.0.5} and \texttt{9.12.0.1} is not a curious Galois group.
       \end{itemize}

        The rest of the curious Galois groups of type $(2,3)$ are of level $\leq 70$, hence, will be in the LMFDB. We use the LMFDB for this search. We find that if $\operatorname{H}$ is a curious Galois group of type $(2,3)$, then the level of $\operatorname{H}$ is equal to $24$.

The candidates of level $4 \cdot 9$ are the groups \texttt{36.36.1.1} and \texttt{36.24.0.10}. 

The candidates of level $2 \cdot 9$ are the groups \texttt{18.24.0.1} and \texttt{18.36.0.1}.

The candidates of level $16 \cdot 3$ are \texttt{48.72.0.1}, \texttt{48.72.0.2}, \texttt{48.72.0.3}, and \texttt{48.72.0.4}.

The candidates of level $8 \cdot 3$ are

\texttt{24.18.0.6}, \texttt{24.18.0.5}, \texttt{24.36.0.15}, \texttt{24.36.0.16}, \texttt{24.36.0.17}, \texttt{24.36.0.18}, \href{https://lmfdb.org/ModularCurve/Q/24.6.1.2/}{\texttt{24.6.1.2}},

\href{https://lmfdb.org/ModularCurve/Q/24.18.1.8/}{\texttt{24.18.1.8}}, \href{https://lmfdb.org/ModularCurve/Q/24.18.1.5/}{\texttt{24.18.1.5}}, \texttt{24.36.1.61}, \texttt{24.36.1.62}, \texttt{24.48.1.79}, \texttt{24.8.0.9}, 

\texttt{24.8.0.10}, \texttt{24.24.0.99}, \texttt{24.24.0.100},
\texttt{24.24.0.101}, \texttt{24.24.0.102}, \href{https://lmfdb.org/ModularCurve/Q/24.12.1.3/}{\texttt{24.12.1.3}}, 

\href{https://lmfdb.org/ModularCurve/Q/24.36.1.8/}{\texttt{24.36.1.8}}, \href{https://lmfdb.org/ModularCurve/Q/24.36.1.3/}{\texttt{24.36.1.3}}, \href{https://lmfdb.org/ModularCurve/Q/24.24.1.2/}{\texttt{24.24.1.2}}, \texttt{24.72.1.4}, \texttt{24.72.1.1}.

The candidates of level $4 \cdot 3$ are

\texttt{12.12.0.5}, \texttt{12.18.0.1}, \texttt{12.18.0.2}, \texttt{12.8.0.5}, \texttt{12.24.0.5}, \texttt{12.24.0.4}, and \texttt{12.24.0.3}.

The candidates of level $2 \cdot 3$ are

\texttt{6.8.0.1}, \texttt{6.9.0.1}, \texttt{6.12.0.1}, \texttt{6.18.0.1}, \texttt{6.18.0.2}, \texttt{6.36.0.1}, and \texttt{6.24.0.1}.

Note that each of the remaining groups of genus $1$ are of level $24$ or the group \texttt{36.36.1.1}. In the case of \texttt{36.36.1.1}, there are no proper subgroups that are arithmetically admissible and have genus less than $2$. By a pigeonhole principle argument, \texttt{36.36.1.1} is not a curious Galois group. We move on to the case that the level of the group in question is equal to $24$.

We refer the reader to Table \ref{Curious Galois groups} which contains the groups of interest of level $24$.
We write the label of the generic elliptic curve generated by a group $\operatorname{H}$ in the third column of Table \ref{Curious Galois groups}. When the LMFDB has an example of an elliptic curve $E/\QQ$ such that $\overline{\rho}_{E,24}(G_{\QQ})$ is conjugate to $\operatorname{H}$, we write the label of $E$ in the second column of Table \ref{Curious Galois groups}.
We write the words \texttt{NOT CURIOUS} in the second column of Table \ref{Curious Galois groups}, if the corresponding group $\operatorname{H}$ has no proper arithmetically admissible subgroups of genus less than $2$ but there is no corresponding elliptic curve $E/\QQ$ in the LMFDB such that $\overline{\rho}_{E,24}(G_{\QQ})$ is conjugate to $\operatorname{H}$. The remaining seven groups in Table \ref{Curious Galois groups} are curious and we write the word \texttt{CURIOUS} in the second column of Table \ref{Curious Galois groups}. We prove that the remaining seven groups of level $24$ are in fact curious.

\begin{itemize}

\item \href{https://lmfdb.org/ModularCurve/Q/24.6.1.2/}{\texttt{24.6.1.2}}

The group \href{https://lmfdb.org/ModularCurve/Q/24.6.1.2/}{\texttt{24.6.1.2}} was proven to be curious in \cite{Daniels2018SerresCO}.

\item \href{https://lmfdb.org/ModularCurve/Q/24.12.1.3/}{\texttt{24.12.1.3}}

The group \href{https://lmfdb.org/ModularCurve/Q/24.12.1.3/}{\texttt{24.12.1.3}} generates a modular curve that is isomorphic to the elliptic curve $\operatorname{X}$ with label \texttt{576.e1}. Note that $\operatorname{X}(\QQ)_{\text{tors}} \cong \ZZ \times \ZZ / 2 \ZZ$. There are four arithmetically admissible subgroups of \href{https://lmfdb.org/ModularCurve/Q/24.12.1.3/}{\texttt{24.12.1.3}} of index $2$ and genus $1$, two of which generate the elliptic curve $\operatorname{X}'$ with label \texttt{576.e3}. Note that $\operatorname{X}'(\QQ) \cong \ZZ \times \ZZ / 2 \ZZ$. We list the two subgroups of index $2$ that generate \texttt{576.3} below.

Let $g_{1} = (-14,-8)$ be a generator of $\operatorname{X}(\QQ)$ of infinite order and let $g_{1}' = (10,28)$ be a generator of $\operatorname{X}'(\QQ)$ of infinite order. Let $g_{2} = (-12,0)$ be the rational point on $\operatorname{X}$ of order $2$ and let $g_{2}' = (6,0)$ be the rational point on $\operatorname{X}'$ of order $2$. Then there is an isogeny
$$\phi \colon \operatorname{X}' \to \operatorname{X}$$
generated by $g_{2}'$ such that $\phi(g_{1}') = -2 \cdot g_{1}$.
\begin{enumerate}
    \item $\operatorname{H}_{1} =$ \href{https://lmfdb.org/ModularCurve/Q/24.24.1.2/}{\texttt{24.24.1.2}} is a subgroup of \href{https://lmfdb.org/ModularCurve/Q/24.12.1.3/}{\texttt{24.12.1.3}} of index $2$ which is the direct product of \texttt{8.2.0.2} and \texttt{3.12.0.1} and generates an elliptic curve $\operatorname{X}_{1}$.
    \item $\operatorname{H}_{2} =$ \texttt{24.24.1.129} is a subgroup of \href{https://lmfdb.org/ModularCurve/Q/24.12.1.3/}{\texttt{24.12.1.3}} of index $2$ and generates an elliptic curve $\operatorname{X}_{2}$. Note that the reduction of \texttt{24.24.1.129} modulo $8$ is conjugate to \texttt{8.2.0.2} and the reduction modulo $3$ is conjugate to \texttt{3.12.0.1}.
\end{enumerate}
The rational points on $\operatorname{X}$ are of the form $[A] \cdot g_{1} + [B] \cdot g_{2}$ where $A$ is an integer and $B = 0$ or $1$. Note that for an integer $N$, the \textit{j}-invariant corresponding to $[A] \cdot g_{1}$ is equal to the \textit{j}-invariant corresponding to $[A] \cdot g_{1} + g_{2}$. To see why this is true, we take the \textit{j}-invariant corresponding to \href{https://lmfdb.org/ModularCurve/Q/24.12.1.3/}{\texttt{24.12.1.3}} from the LMFDB and see that all powers of $y$ are even. Thus, we substitute $y^{2}(X)$ as $X^{3} - 540X - 4752$ with $X$ as $x$. Next, we note that $g_{2} = (-12,0)$ and thus, we substitute $y^{2}(X)$ as $X^{3}-540X - 4752$ with $X = \frac{y^{2}(X)}{(X^{2}-(-12))^{2}} - X - (-12)$, in other words, $X$ is the $x$-coordinate of the sum of a rational point $(x,y)$ on $\operatorname{X}$ and $(-12,0)$. We observe those two \textit{j}-invariants are equal.

Let $E_{1}/\QQ$ be the elliptic curve \texttt{350.f3}. Then $\overline{\rho}_{E_{1},24}(G_{\QQ})$ is conjugate to \texttt{24.72.1.4}, the only subgroup of $\operatorname{H}_{1}$ of index $3$. Let $E_{2}/\QQ$ be the elliptic curve with label \texttt{65280.g1}. Then $\overline{\rho}_{E{2},24}(G_{\QQ})$ is conjugate to \texttt{24.72.1.50}, the only subgroup of $\operatorname{H}_{2}$ of index $3$. Moreover, \texttt{24.72.1.4} is not conjugate to \texttt{24.72.1.50}. We partition $S = \operatorname{X}(\QQ)$ into two subsets: $S_{1}$ the subset of $S$ of rational points $[A] \cdot g_{1} + [B] \cdot g_{2}$ where $A$ is even and $B$ is $0$ or $1$ and $S_{2}$ the subset of $S$ of rational points $[A] \cdot g_{1} + [B] \cdot g_{2}$ where $A$ is odd and $B = 0$ or $1$. Let $i = 1$ or $2$. The rational morphism $\psi_{i} \colon \operatorname{X}_{i} \to \operatorname{X}$ is of the form $\tau_{D_{i}} \circ [M_{i}] \circ \phi$ where $[M_{i}]$ is the identity map or the inversion map and $D_{i} = [A_{i}] \cdot g_{1} + [B_{i}] \cdot g_{2}$ for some integer $A_{i}$ and $B_{i} = 0$ or $1$. As we have found elliptic curves $E_{i}$ such that $\overline{\rho}_{E,24}(G_{\QQ})$ is conjugate to a subgroup of $\operatorname{H}_{i}$ but not to a subgroup of $\operatorname{H}_{j}$ when $i \neq j$, for all $i,j \in \left\{1,2\right\}$ we have that $A_{i} \not \equiv A_{j} \bmod 2$ when $i \neq j$. Thus, $\operatorname{H}$ is a curious Galois group. Thus, \href{https://lmfdb.org/ModularCurve/Q/24.12.1.3/}{\texttt{24.12.1.3}} is a curious Galois group.

\item \href{https://lmfdb.org/ModularCurve/Q/24.24.1.2/}{\texttt{24.24.1.2}}

The group \href{https://lmfdb.org/ModularCurve/Q/24.24.1.2/}{\texttt{24.24.1.2}} was proven to be curious in \cite{Chiloyan20232adicGI}.

\item \href{https://lmfdb.org/ModularCurve/Q/24.18.1.5/}{\texttt{24.18.1.5}}, the direct product of \texttt{8.6.0.1} and \texttt{3.3.0.1}, which generates the elliptic curve $\operatorname{X}/\QQ$ with label \texttt{576.e4}. Note that $\operatorname{X}(\QQ) \cong \ZZ \times \ZZ / 2 \ZZ$. There are eight proper subgroups of \href{https://lmfdb.org/ModularCurve/Q/24.18.1.5/}{\texttt{24.18.1.5}} that are arithmetically admissible and have genus $0$ or $1$; four of which have index $2$ and four of which have index $4$. We list them below. The index-$4$ subgroups of \href{https://lmfdb.org/ModularCurve/Q/24.18.1.5/}{\texttt{24.18.1.5}} of genus $1$ are index-$2$ subgroups of \href{https://lmfdb.org/ModularCurve/Q/24.36.1.8/}{\texttt{24.36.1.8}}. The modular curves generated by \href{https://lmfdb.org/ModularCurve/Q/24.36.1.8/}{\texttt{24.36.1.8}} and \texttt{24.36.1.149} are isomorphic to the elliptic curve $\operatorname{X}'/\QQ$ with label \texttt{576.e2}. Note that $\operatorname{X}'(\QQ) \cong \ZZ \times \ZZ / 2 \ZZ$.

Let $g_{1} = (2,-4)$ be a generator of $\operatorname{X}(\QQ)$ of infinite order and let $g_{1}' = (10,-24)$ be a generator of $\operatorname{X}'(\QQ)$ of infinite order. Let $g_{2} = (-2,0)$ be the rational point on $\operatorname{X}$ of order $2$ and let $g_{2}' = (4,0)$ be the rational point on $\operatorname{X}'$ of order $2$. There is an isogeny
        $$\phi \colon \operatorname{X}' \to \operatorname{X}$$
        generated by $g_{2}'$ such that $\phi(g_{1}') = g_{1}$.
\begin{enumerate}
    \item $\operatorname{H}_{1}$ = \href{https://lmfdb.org/ModularCurve/Q/24.36.1.8/}{\texttt{24.36.1.8}} (also curious) is a subgroup of \href{https://lmfdb.org/ModularCurve/Q/24.18.1.5/}{\texttt{24.18.1.5}} of index $2$ which is the direct product of \texttt{8.6.0.1} and \texttt{3.6.0.1} and generates the elliptic curve $\operatorname{X}_{1}$.
    \item \texttt{24.36.1.103} is a subgroup of \href{https://lmfdb.org/ModularCurve/Q/24.18.1.5/}{\texttt{24.18.1.5}} of index $2$. The modular curve generated by \texttt{24.36.1.103} does not have points defined over $\QQ_{2}$.
    \item \texttt{24.36.1.114} is a subgroup of \href{https://lmfdb.org/ModularCurve/Q/24.18.1.5/}{\texttt{24.18.1.5}} of index $2$. The modular curve generated by \texttt{24.36.1.114} has no points defined over $\QQ_{2}$.
    \item $\operatorname{H}_{2} =$ \texttt{24.36.1.149} is a subggroup of \href{https://lmfdb.org/ModularCurve/Q/24.18.1.5/}{\texttt{24.18.1.5}} of index $2$ and generates the elliptic curve $\operatorname{X}_{2}$. The reduction of \texttt{24.36.1.149} modulo $8$ is conjugate to \texttt{8.6.0.1} and the reduction modulo $3$ is conjugate to \texttt{3.6.0.1}.
\item \texttt{24.72.1.1}, a subgroup of \href{https://lmfdb.org/ModularCurve/Q/24.18.1.5/}{\texttt{24.18.1.5}} of index $4$.
\item \texttt{24.72.1.32} is a subgroup of \href{https://lmfdb.org/ModularCurve/Q/24.18.1.5/}{\texttt{24.18.1.5}} of index $4$.
\item \texttt{24.72.1.46} is a subgroup of \href{https://lmfdb.org/ModularCurve/Q/24.18.1.5/}{\texttt{24.18.1.5}} of index $4$.
\item \texttt{24.72.1.60} is a subgroup of \href{https://lmfdb.org/ModularCurve/Q/24.18.1.5/}{\texttt{24.18.1.5}} of index $4$.
\end{enumerate}
The rational points on $\operatorname{X}$ are of the form $[A] \cdot g_{1} + [B] \cdot g_{2}$ where $A$ is an integer and $B = 0$ or $1$. We partition $S = \operatorname{X}(\QQ)$ into two subsets: $S_{1}$ the subset of $S$ where $B = 0$ and $S_{2}$ the subset of $S$ where $B = 1$.

Let $E/\QQ$ be the elliptic curve with label \texttt{5888.a2}. Then $\overline{\rho}_{E,24}(G_{\QQ})$ is conjugate to \texttt{24.36.1.149}. Let $E/\QQ$ be the elliptic curve with label \texttt{350.f6}. Then $\overline{\rho}_{E,24}(G_{\QQ})$ is conjugate to \texttt{24.72.1.1}, a subgroup of \href{https://lmfdb.org/ModularCurve/Q/24.36.1.8/}{\texttt{24.36.1.8}} of index $2$. Note that the genus of a proper arithmetically admissible subgroup of \texttt{24.36.1.149} is greater than $1$ and hence, \texttt{24.72.1.1} is not conjugate to a subgroup of \texttt{24.36.1.149}.

Let $i = 1$ or $2$. The rational morphism $\psi_{i} \colon \operatorname{X}_{i} \to \operatorname{X}$ is of the form $\tau_{D_{i}} \circ [M_{i}] \circ \phi$ where $[M_{i}]$ is the identity map or the inversion map and $D_{i} = [A_{i}] \cdot g_{1} + [B_{i}] \cdot g_{2}$ for some integer $A_{i}$ and $B_{i} = 0$ or $1$. As we have found elliptic curves $E_{i}$ such that $\overline{\rho}_{E,24}(G_{\QQ})$ is conjugate to a subgroup of $\operatorname{H}_{i}$ but not to a subgroup of $\operatorname{H}_{j}$ when $i \neq j$ for all $i,j \in \left\{1,2\right\}$, we have that $B_{i} \not \equiv B_{j} \bmod 2$ when $i \neq j$. Thus, \href{https://lmfdb.org/ModularCurve/Q/24.18.1.5/}{\texttt{24.18.1.5}} is a curious Galois group.

\item \href{https://lmfdb.org/ModularCurve/Q/24.18.1.8/}{\texttt{24.18.1.8}}, the direct product of \texttt{8.6.0.6} and \texttt{3.3.0.1}, which generates the elliptic curve $\operatorname{X}/\QQ$ with label \texttt{576.e4}. Note that $\operatorname{X}(\QQ) \cong \ZZ \times \ZZ / 2 \ZZ$. There are eight proper subgroups of \href{https://lmfdb.org/ModularCurve/Q/24.18.1.8/}{\texttt{24.18.1.8}} that are arithmetically admissible and have genus $1$; four of which have index $2$ and four of which have index $4$. The admissible subgroups of \texttt{24.18.18} of index $4$ and genus $1$ are subgroups of \href{https://lmfdb.org/ModularCurve/Q/24.36.1.3/}{\texttt{24.36.1.3}} of index $2$. We list them below. The modular curves generated by \href{https://lmfdb.org/ModularCurve/Q/24.36.1.3/}{\texttt{24.36.1.3}} and \texttt{24.36.1.146} are isomorphic to the elliptic curve $\operatorname{X}'/\QQ$ with label \texttt{576.e2}. Note that $\operatorname{X}'(\QQ) \cong \ZZ \times \ZZ / 2 \ZZ$.

Let $g_{1} = (2,-4)$ be a generator of $\operatorname{X}(\QQ)$ of infinite order and let $g_{1}' = (10,-24)$ be a generator of $\operatorname{X}'(\QQ)$ of infinite order. Let $g_{2} = (-2,0)$ be the rational point on $\operatorname{X}$ of order $2$ and let $g_{2}' = (4,0)$ be the rational point on $\operatorname{X}'$ of order $2$. There is an isogeny
        $$\phi \colon \operatorname{X}' \to \operatorname{X}$$
        generated by $g_{2}'$ such that $\phi(g_{1}') = g_{1}$.

\begin{enumerate}
    \item $\operatorname{H}_{1} =$ \href{https://lmfdb.org/ModularCurve/Q/24.36.1.3/}{\texttt{24.36.1.3}}, (also curious) a subgroup of \href{https://lmfdb.org/ModularCurve/Q/24.18.1.8/}{\texttt{24.18.1.8}} of index $2$ which is the direct product of \texttt{8.6.0.6} and \texttt{3.6.0.1} and generates a modular curve $\operatorname{X}_{1}$.
    \item \texttt{24.36.1.27} a subgroup of \href{https://lmfdb.org/ModularCurve/Q/24.18.1.8/}{\texttt{24.18.1.8}} of index $2$. The modular curve generated by \texttt{24.36.1.27} does not contain points defined over $\QQ_{2}$.
    \item \texttt{24.36.1.142}, a subgroup of \href{https://lmfdb.org/ModularCurve/Q/24.18.1.8/}{\texttt{24.18.1.8}} of index $2$. The modular curve generated by \texttt{24.36.1.27} does not contain points defined over $\QQ_{2}$.
    \item $\operatorname{H}_{2} =$ \texttt{24.36.1.146}, a subgroup of \href{https://lmfdb.org/ModularCurve/Q/24.18.1.8/}{\texttt{24.18.1.8}} of index $2$ that generates a modular curve $\operatorname{X}_{2}$. The reduction of \texttt{24.36.1.146} modulo $8$ is conjugate to \texttt{8.6.0.6} and the reduction modulo $3$ is conjugate to \texttt{3.3.0.1}.
    \item \texttt{24.72.1.4} a subgroup of \href{https://lmfdb.org/ModularCurve/Q/24.18.1.8/}{\texttt{24.18.1.8}} of index $4$.
    \item \texttt{24.72.1.10}, a subgroup of \href{https://lmfdb.org/ModularCurve/Q/24.18.1.8/}{\texttt{24.18.1.8}} of index $4$.
    \item \texttt{24.72.1.50} a subgroup of \href{https://lmfdb.org/ModularCurve/Q/24.18.1.8/}{\texttt{24.18.1.8}} of index $4$.
    \item \texttt{24.72.1.58} a subgroup of \href{https://lmfdb.org/ModularCurve/Q/24.18.1.8/}{\texttt{24.18.1.8}} of index $4$.
\end{enumerate}
The rational points on $\operatorname{X}$ are of the form $[A] \cdot g_{1} + [B] \cdot g_{2}$ where $A$ is an integer and $B = 0$ or $1$. We partition $S = \operatorname{X}(\QQ)$ into two subsets: $S_{1}$ the subset of $S$ where $B = 0$ and $S_{2}$ the subset of $S$ where $B = 1$. All proper, arithmetically admissible subgroups of \texttt{24.36.1.146} have genus greater than $1$. By the pidgeonhole principle, \texttt{24.36.1.146} is not a curious Galois group. Let $E/\QQ$ be the elliptic curve \texttt{350.f3}. Then $\overline{\rho}_{E,24}(G_{\QQ})$ is conjugate to \texttt{24.72.1.4}, a subgroup of \href{https://lmfdb.org/ModularCurve/Q/24.36.1.3/}{\texttt{24.36.1.3}} of index $2$. Note that \href{https://lmfdb.org/ModularCurve/Q/24.36.1.3/}{\texttt{24.36.1.3}} is a group of genus $1$ and hence, cannot be conjugate to a subgroup of \texttt{24.36.1.146}.

Let $i = 1$ or $2$. The rational morphism $\psi_{i} \colon \operatorname{X}_{i} \to \operatorname{X}$ is of the form $\tau_{D_{i}} \circ [M_{i}] \circ \phi$ where $[M_{i}]$ is the identity map or the inversion map and $D_{i} = [A_{i}] \cdot g_{1} + [B_{i}] \cdot g_{2}$ for some integer $A_{i}$ and $B_{i} = 0$ or $1$. As there are elliptic curves $E_{i}$ such that $\overline{\rho}_{E_{i},24}(G_{\QQ})$ is conjugate to a subgroup of $\operatorname{H}_{i}$ but not to a subgroup of $\operatorname{H}_{j}$ when $i \neq j$ for all $i,j \in \left\{1,2\right\}$, we have that $B_{i} \not \equiv B_{j} \bmod 2$ when $i \neq j$. Thus, \href{https://lmfdb.org/ModularCurve/Q/24.18.1.8/}{\texttt{24.18.1.8}} is a curious Galois group.

\item \href{https://lmfdb.org/ModularCurve/Q/24.36.1.3/}{\texttt{24.36.1.3}}, the direct product of \texttt{8.6.0.6} and \texttt{3.6.0.1}, which generates the elliptic curve $\operatorname{X}/\QQ$ with label \texttt{576.e2}. Note that $\operatorname{X}(\QQ) \cong \ZZ \times \ZZ / 2 \ZZ$. There are four proper subgroups of \href{https://lmfdb.org/ModularCurve/Q/24.36.1.3/}{\texttt{24.36.1.3}} that are arithmetically admissible and have genus $0$ or $1$. We list them below. The modular curves generated by \texttt{24.72.1.4} and \texttt{24.72.1.50} are isomorphic to the elliptic curve $\operatorname{X}'/\QQ$ with label \texttt{576.e4}. Note that $\operatorname{X}'(\QQ) \cong \ZZ \times \ZZ / 2 \ZZ$.

Let $g_{1} = (10,-24)$ be a generator of $\operatorname{X}(\QQ)$ of infinite order and let $g'_{1} = (2,-4)$ be a generator of $\operatorname{X}'(\QQ)$ of infinite order. Let $g_{2} = (4,0)$ be the rational point on $\operatorname{X}$ of order $2$. There is an isogeny
        $$\phi \colon \operatorname{X}' \to \operatorname{X}$$
        generated by the rational point on $\operatorname{X}'$ of order $2$ such that $\phi(g_{1}') = 2 \cdot g_{1}$.

\begin{enumerate}
    \item $\operatorname{H}_{1} = \texttt{24.72.1.4}$ is a subgroup of \href{https://lmfdb.org/ModularCurve/Q/24.36.1.3/}{\texttt{24.36.1.3}} of index $2$ which is the direct product of \texttt{8.6.0.6} and \texttt{3.12.0.1} and generates an elliptic curve $\operatorname{X}_{1}$.
    \item \texttt{24.72.1.10} is a subgroup of \href{https://lmfdb.org/ModularCurve/Q/24.36.1.3/}{\texttt{24.36.1.3}} of index $2$. The modular curve generated by \texttt{24.72.1.10} does not have points defined over $\QQ_{2}$.
    \item $\operatorname{H}_{2} = \texttt{24.72.1.50}$ is a subgroup of \href{https://lmfdb.org/ModularCurve/Q/24.36.1.3/}{\texttt{24.36.1.3}} of index $2$ which generates an elliptic curve $\operatorname{X}_{2}$. The reduction of \texttt{24.72.1.50} modulo $8$ is conjugate to \texttt{8.6.0.6} and the reduction modulo $3$ is conjugate to \texttt{3.6.0.1}.
    \item \texttt{24.72.1.58} is a subgroup of \href{https://lmfdb.org/ModularCurve/Q/24.36.1.3/}{\texttt{24.36.1.3}} of index $2$. The modular curve generated by \texttt{24.72.1.58} does not have points defined over $\QQ_{2}$.
    \end{enumerate}

The rational points on $\operatorname{X}$ are of the form $[A] \cdot g_{1} + [B] \cdot g_{2}$ where $A$ is an integer and $B = 0$ or $1$. We note that for an integer $A$, the \textit{j}-invariant associated to $[A] \cdot g_{1}$ is equal to the \textit{j}-invariant associated to $[A] \cdot g_{1} + g_{2}$. To see why this is true, we take the \textit{j}-invariant corresponding to \href{https://lmfdb.org/ModularCurve/Q/24.36.1.3/}{\texttt{24.36.1.3}} from the LMFDB and see that all powers of $y$ are even. Thus, we substitute $y^{2}(X)$ as $X^{3} - 60X + 176$ with $X$ as $x$. Next, we note that $g_{2} = (4,0)$ and thus, we substitute $y^{2}(X)$ as $X^{3} - 60X + 176$ with $X = \frac{y^{2}(X)}{(X^{2}-4)^{2}} - X - 4$, in other words, $X$ is the $x$-coordinate of the sum of a rational point $(x,y)$ on $\operatorname{X}$ and $(4,0)$. We observe those two \textit{j}-invariants are equal.

We partition $S = \operatorname{X}(\QQ)$ into two subsets: $S_{1}$ the subset of $S$ where $A$ is even and $S_{2}$ the subset of $S$ where $A$ is odd. Let $E_{1}/\QQ$ be the elliptic curve \texttt{350.f3}. Then $\overline{\rho}_{E_{1},24}(G_{\QQ})$ is conjugate to \texttt{24.72.1.4}. Let $E_{2}/\QQ$ be the elliptic curve \texttt{65280.g1}. Then $\overline{\rho}_{E_{2},24}(G_{\QQ})$ is conjugate to \texttt{24.72.1.50}. Note that \texttt{24.72.1.4} is not conjugate to a subgroup of $\operatorname{H}_{2}$ and \texttt{24.72.1.50} is not conjugate to a subgroup of $\operatorname{H}_{1}$.

For $i = 1$ and $2$, the rational morphism $\psi_{i} \colon \operatorname{X}_{i} \to \operatorname{X}$ is of the form $\tau_{D_{i}} \circ [M_{j}] \circ \phi$ where $[M_{j}]$  is the inversion map or the identity map and $D_{i} = [A_{i}] \cdot g_{1} + [B_{i}] \cdot g_{2}$ is a rational point on $\operatorname{X}$. As we have found elliptic curves $E_{1}$ and $E_{2}$ such that $\overline{\rho}_{E_{1},24}(\QQ)$ is conjugate to a subgroup of $\operatorname{H}_{1}$ but not to $\operatorname{H}_{2}$ and $\overline{\rho}_{E_{2},24}(\QQ)$ is conjugate to a subgroup of $\operatorname{H}_{2}$ but not to $\operatorname{H}_{1}$, we have that $A_{1} \not \equiv A_{2} \mod 2$. Thus, \href{https://lmfdb.org/ModularCurve/Q/24.36.1.3/}{\texttt{24.36.1.3}} is a curious Galois group.

\item \href{https://lmfdb.org/ModularCurve/Q/24.36.1.8/}{\texttt{24.36.1.8}}, the direct product of \texttt{8.6.0.1} and \texttt{3.6.0.1}, which generates the elliptic curve $\operatorname{X}/\QQ$ with label \texttt{576.e2}. Note that $\operatorname{X}(\QQ) \cong \ZZ \times \ZZ / 2 \ZZ$. There are four proper subgroups of \href{https://lmfdb.org/ModularCurve/Q/24.36.1.8/}{\texttt{24.36.1.8}} that are arithmetically admissible and have genus $0$ or $1$. We list them below. The modular curves generated by \texttt{24.72.1.1} and \texttt{24.72.1.46} are isomorphic to the elliptic curve $\operatorname{X}'$ with label \texttt{576.e4}. Note that $\operatorname{X}'(\QQ) \cong \ZZ \times \ZZ / 2 \ZZ$. 

Let $g_{1} = (10,-24)$ be a generator of $\operatorname{X}(\QQ)$ of infinite order and let $g'_{1} = (2,-4)$ be a generator of $\operatorname{X}'(\QQ)$ of infinite order. Let $g_{2} = (4,0)$ be the rational point on $\operatorname{X}$ of order $2$ and let $A$ be an integer. There is an isogeny
        $$\phi \colon \operatorname{X}' \to \operatorname{X}$$
        generated by the rational point on $\operatorname{X}'$ of order $2$ such that $\phi(g') = 2 \cdot g$.

\begin{enumerate}
\item $\operatorname{H}_{1} = \texttt{24.72.1.1}$ is an index-$2$ subgroup of \href{https://lmfdb.org/ModularCurve/Q/24.36.1.8/}{\texttt{24.36.1.8}} which is the direct product of \texttt{8.6.0.1} and \texttt{3.12.0.1} and generates a modular curve $\operatorname{X}_{1}$.
    \item \texttt{24.72.1.32} is an index-$2$ subgroup of \href{https://lmfdb.org/ModularCurve/Q/24.36.1.8/}{\texttt{24.36.1.8}}. The modular curve generated by \texttt{24.72.1.32} has no points defined over $\QQ_{2}$
\item $\operatorname{H}_{2} = \texttt{24.72.1.46}$ is an index-$2$ subgroup of \href{https://lmfdb.org/ModularCurve/Q/24.36.1.8/}{\texttt{24.36.1.8}} which generates an elliptic curve $\operatorname{X}_{2}$. The reduction of \texttt{24.72.1.46} modulo $8$ is conjugate to \texttt{8.6.0.1} and the reduction modulo $3$ is conjugate to \texttt{3.6.0.1}.
\item \texttt{24.72.1.60} is an index-$2$ subgroup of \href{https://lmfdb.org/ModularCurve/Q/24.36.1.8/}{\texttt{24.36.1.8}}. The modular curve generated by \texttt{24.72.1.60} does not have points defined over $\QQ_{2}$.
    \end{enumerate}
The rational points on $\operatorname{X}$ are of the form $[A] \cdot g_{1} + [B] \cdot g_{2}$ where $A$ is an integer and $B = 0$ or $1$. We note that for an integer $A$, the \textit{j}-invariant associated to $[A] \cdot g_{1}$ is equal to the \textit{j}-invariant associated to $[A] \cdot g_{1} + g_{2}$. To see why this is true, we take the \textit{j}-invariant corresponding to \href{https://lmfdb.org/ModularCurve/Q/24.36.1.8/}{\texttt{24.36.1.8}} from the LMFDB and see that all powers of $y$ are even. Thus, we substitute $y^{2}(X)$ as $X^{3} - 60X + 176$ with $X$ as $x$. Next, we note that $g_{2} = (4,0)$ and thus, we substitute $y^{2}(X)$ as $X^{3} - 60X + 176$ with $X = \frac{y^{2}(X)}{(X^{2}-4)^{2}} - X - 4$, in other words, $X$ is the $x$-coordinate of the sum of a rational point $(x,y)$ on $\operatorname{X}$ and $(4,0)$. We observe those two \textit{j}-invariants are equal.

We partition $S = \operatorname{X}(\QQ)$ into two subsets: $S_{1}$ the subset of $S$ where $A$ is even and $S_{2}$ the subset of $S$ where $A$ is odd. Let $E_{1}/\QQ$ be the elliptic curve \texttt{350.f6}. Then $\overline{\rho}_{E_{1},24}(G_{\QQ})$ is conjugate to \texttt{24.72.1.1}. Let $E_{2}/\QQ$ be the elliptic curve with \texttt{65280.g2}. Then $\overline{\rho}_{E_{2},24}(G_{\QQ})$ is conjugate to \texttt{24.72.1.46}. Note that \texttt{24.72.1.1} is not conjugate to \texttt{24.72.1.46}. For $i = 1$ and $2$, the rational morphism $\psi_{i} \colon \operatorname{X}_{i} \to \operatorname{X}$ is of the form $\tau_{D_{i}} \circ [M_{i}] \circ \phi$ where $[M_{i}]$ is the inversion map or the identity map and $D_{i} = [A_{i}] \cdot g_{1} + [B_{i}] \cdot g_{2}$ for some integer $A_{i}$ and $B_{i} = 0$ or $1$. As we have found elliptic curves $E_{1}$ and $E_{2}$ such that $\overline{\rho}_{E_{1},24}(G_{\QQ})$ is conjugate to a subgroup of $\operatorname{H}_{1}$ but not to a subgroup of $\operatorname{H}_{2}$ and $\overline{\rho}_{E_{2},24}(G_{\QQ})$ is conjugate to a subgroup of $\operatorname{H}_{2}$ but not to a subgroup of $\operatorname{H}_{1}$, we have that $A_{1} \not \equiv A_{2} \mod 2$. Thus, \href{https://lmfdb.org/ModularCurve/Q/24.36.1.8/}{\texttt{24.36.1.8}} is a curious Galois group.

\end{itemize}
\end{enumerate}
\end{proof}

\begin{corollary}
    Let $m$ be an integer greater than equal to $2$ and let $p_{2} < \ldots < p_{m}$ be odd primes. Then there are no curious Galois groups of type $(2,p_{2}, \ldots, p_{m})$.
\end{corollary}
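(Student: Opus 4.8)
The plan is to follow the template of the corollary immediately after Theorem~\ref{odd theorem}. Suppose $\operatorname{H}=\operatorname{H}_{2}\times\operatorname{H}_{p_{2}}\times\cdots\times\operatorname{H}_{p_{m}}$ with $m\geq 3$ is a curious Galois group of type $(2,p_{2},\ldots,p_{m})$. Then $\operatorname{X}_{\operatorname{H}}$ has infinitely many rational points, so the finite-degree reduction map $\operatorname{X}_{\operatorname{H}}\to\operatorname{X}_{\operatorname{H}_{p_{i}}\times\operatorname{H}_{p_{j}}}$ shows that $\operatorname{X}_{\operatorname{H}_{p_{i}}\times\operatorname{H}_{p_{j}}}$ has infinitely many rational points for every pair $i<j$; exhibiting even one pair $\{p_{i},p_{j}\}$ for which this modular curve has \emph{finitely} many rational points therefore yields a contradiction. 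By Theorem~\ref{odd theorem} and its proof, a product of two groups from \cite{SZ} of distinct odd prime-power levels $p<p'$ generates a modular curve with infinitely many rational points only when $p=3$, and then necessarily $p'\in\{5,7\}$, with $\operatorname{H}_{3}$ of level $3$, $\operatorname{H}_{p'}$ of level $p'$, and $\operatorname{H}_{3}\times\operatorname{H}_{p'}$ lying on an explicit short list. Hence if $m\geq 4$ we take the pair $\{p_{3},p_{4}\}$ (both primes are $\geq 5$), and if $m=3$ with $p_{2}\geq 5$ we take $\{p_{2},p_{3}\}$; in either case we are done, and we are reduced to the residual case $\operatorname{H}=\operatorname{H}_{2}\times\operatorname{H}_{3}\times\operatorname{H}_{q}$ with $q\in\{5,7\}$.

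For the residual case I would enumerate the finitely many candidate triples. By the above, $\operatorname{H}_{3}$ has level $3$ and $\operatorname{H}_{q}$ has level $q$ (levels $9$, $25$ and $49$ are ruled out since the corresponding products already have genus $\geq 2$ in the proof of Theorem~\ref{odd theorem}), and $\operatorname{H}_{3}\times\operatorname{H}_{q}$ lies on the explicit list of such products whose modular curve has infinitely many rational points. Moreover $\operatorname{X}_{\operatorname{H}_{2}\times\operatorname{H}_{q}}$ has infinitely many rational points, so by the proof of the preceding theorem $\operatorname{H}_{2}$ has level at most $8$; thus only finitely many admissible $2$-power-level groups $\operatorname{H}_{2}$ (and their twists without $\operatorname{-Id}$) can occur, leaving finitely many candidates $\operatorname{H}=\operatorname{H}_{2}\times\operatorname{H}_{3}\times\operatorname{H}_{q}$, each of level dividing $120$ or $168$.

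For each surviving candidate I would use Lemma~\ref{product groups} to realize $\operatorname{H}$ as an explicit subgroup of $\operatorname{GL}(2,\mathbb{Z}/N\mathbb{Z})$ and the \texttt{GL2Genus} routine associated to \cite{rouse_sutherland_zureick-brown_2022} to compute the genus of $\operatorname{X}_{\operatorname{H}}$. I expect every such genus to be at least $2$, whereupon Theorem~\ref{Faltings} makes $\operatorname{X}_{\operatorname{H}}(\mathbb{Q})$ finite, contradicting that $\operatorname{H}$ is a curious Galois group; should some candidate have genus $1$, one would instead verify that the corresponding elliptic curve has rank $0$, which again forces $\operatorname{X}_{\operatorname{H}}(\mathbb{Q})$ to be finite. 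The main obstacle is bookkeeping rather than theory: one must be sure that the enumeration of triples $(\operatorname{H}_{2},\operatorname{H}_{3},\operatorname{H}_{q})$ is exhaustive --- in particular that every genus-$0$ product $\operatorname{H}_{3}\times\operatorname{H}_{q}$ and every admissible $2$-power-level group that can appear has been accounted for --- because a single overlooked triple whose product modular curve had genus $1$ and positive rank would itself be a curious Galois group of type $(2,3,q)$ and would contradict the statement; the genus computations themselves are routine.
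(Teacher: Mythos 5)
Your proposal is correct and follows essentially the same route as the paper: reduce to $m=3$, use Theorem~\ref{odd theorem} and its proof to force the odd part to be \texttt{3.3.0.1} paired with a level-$5$ or level-$7$ group and to bound the $2$-power level by $8$, then finish the finite list of triples with genus computations and Theorem~\ref{Faltings}. The computations you defer come out as you expect: in the $(2,3,5)$ case every triple product has genus at least $2$, and in the $(2,3,7)$ case already $\operatorname{H}_{2}\times\texttt{7.21.0.1}$ has genus at least $2$, so the genus-$1$, rank-$0$ fallback is never needed.
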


\begin{proof}
    All that needs to be proven is the case when $m = 3$. Let $\operatorname{H}_{2}$ be a proper subgroup of $\operatorname{GL}(2, \ZZ_{2})$. Assume by way of contradiction that $\operatorname{H}_{p_{i}}$ is a proper subgroup of $\operatorname{GL}(2, \ZZ_{p_{i}})$ for $i = 2, 3$ such that $\operatorname{H} = \operatorname{H}_{2} \times \operatorname{H}_{p_{2}} \times \operatorname{H}_{p_{3}}$ is a curious Galois group. Then the genus of $\operatorname{H}$ is equal to $0$ or $1$. By Theorem \ref{odd theorem} and its proof, $p_{2} = 3$ and $\operatorname{H}_{3} = \texttt{3.3.0.1}$. Moreover, $p_{3} = 5$ or $p_{3} = 7$. We break up the proof into two cases.

    \begin{enumerate}

        \item $p_{2} = 3$ and $p_{3} = 5$

        In this case, a priori, we have that $\operatorname{H}_{2}$ is one of \texttt{2.3.0.1}, \texttt{4.4.0.1}, \texttt{8.2.0.1}, \texttt{8.2.0.2}, \texttt{8.6.0.2}, \texttt{8.6.0.4}, or \texttt{8.6.0.6}, $\operatorname{H}_{3}$ is \texttt{3.3.0.1}, and $\operatorname{H}_{5}$ is one of \texttt{5.5.0.1}, \texttt{5.6.0.1}, \texttt{5.10.0.1}, or \texttt{5.15.0.1}. Moreover, in each case, the genus of $\operatorname{H} = \operatorname{H}_{2} \times \operatorname{H}_{3} \times \operatorname{H}_{5}$ is greater than or equal to $2$. Hence, $\operatorname{H}$ is not a curious Galois group.
        \item $p_{2} = 3$ and $p_{3} = 7$

        In this case, a priori, we have that $\operatorname{H}_{7} = \texttt{7.21.0.1}$. Let $\operatorname{H}_{2}$ be a group of $2$-power level from \cite{SZ}. Then the genus of $\operatorname{H}_{2} \times \operatorname{H}_{7}$ is greater than or equal to $2$. Hence, $\operatorname{H}$ is not a curious Galois group.
    \end{enumerate}
\end{proof}

\newpage

\section{Appendix}

        \begin{table}[h!]
	\renewcommand{\arraystretch}{1.25}
	\begin{tabular}{|c|c|c|}
		\hline
		Group & Example elliptic curve & Generic Elliptic curve \\
		\hline
            \texttt{20.20.1.2} & \texttt{12996.a1} & \texttt{400.a1} \\
            \hline
            \href{https://lmfdb.org/ModularCurve/Q/40.12.1.5/}{\texttt{40.12.1.5}} & \texttt{CURIOUS} & \texttt{320.a2} \\
            \hline
            \href{https://lmfdb.org/ModularCurve/Q/40.20.1.2/}{\texttt{40.20.1.2}} & \texttt{CURIOUS} & \texttt{1600.c4} \\
            \hline
            \texttt{40.24.1.17} & \texttt{450.c3} & \texttt{320.a1} \\
            \hline
            \texttt{40.24.1.22} & \texttt{450.c1} & \texttt{320.a1} \\
            \hline
            \href{https://lmfdb.org/ModularCurve/Q/40.36.1.2/}{\texttt{40.36.1.2}} & \texttt{CURIOUS} & \texttt{320.c2} \\
            \hline
            \href{https://lmfdb.org/ModularCurve/Q/40.36.1.4/}{\texttt{40.36.1.4}} & \texttt{CURIOUS} & \texttt{320.a4} \\
            \hline
            \href{https://lmfdb.org/ModularCurve/Q/40.36.1.5/}{\texttt{40.36.1.5}} & \texttt{CURIOUS} & \texttt{320.a4} \\
            \hline
            \texttt{40.72.1.15} & \texttt{198.c4} & \texttt{320.a3} \\
            \hline
            \texttt{40.72.1.22} & \texttt{198.c3} & \texttt{320.a3} \\
            \hline
            \texttt{40.72.1.24} & \texttt{198.c2} & \texttt{320.a3} \\
            \hline
            \texttt{40.72.1.29} & \texttt{198.c1} & \texttt{320.a3}\\
            \hline
\href{https://lmfdb.org/ModularCurve/Q/24.6.1.2/}{\texttt{24.6.1.2}} & \texttt{CURIOUS} & \texttt{576.e3} \\
\hline
\href{https://lmfdb.org/ModularCurve/Q/24.12.1.3/}{\texttt{24.12.1.3}} & \texttt{CURIOUS} & \texttt{576.e1} \\
\hline
\href{https://lmfdb.org/ModularCurve/Q/24.18.1.5/}{\texttt{24.18.1.5}} & \texttt{CURIOUS} & \texttt{576.e4} \\
\hline
\href{https://lmfdb.org/ModularCurve/Q/24.18.1.8/}{\texttt{24.18.1.8}} & \texttt{CURIOUS} & \texttt{576.e4} \\
\hline
\href{https://lmfdb.org/ModularCurve/Q/24.24.1.2/}{\texttt{24.24.1.2}} & \texttt{CURIOUS} & \texttt{576.e3} \\
\hline
\href{https://lmfdb.org/ModularCurve/Q/24.36.1.3/}{\texttt{24.36.1.3}} & \texttt{CURIOUS} & \texttt{576.e2} \\
\hline
\href{https://lmfdb.org/ModularCurve/Q/24.36.1.8/}{\texttt{24.36.1.8}} & \texttt{CURIOUS} & \texttt{576.e2} \\
\hline
\texttt{24.36.1.61} & \texttt{126350.t1} & \texttt{576.e4} \\
\hline
\texttt{24.36.1.62} & \texttt{126350.t2} & \texttt{576.e4} \\
\hline
\texttt{24.48.1.79} & \texttt{NOT CURIOUS} & \texttt{576.e4} \\
\hline
\texttt{24.72.1.1} & \texttt{350.f6} & \texttt{576.e4} \\
\hline
\texttt{24.72.1.4} & \texttt{350.f3} & \texttt{576.e4} \\
\hline
\end{tabular}
	\caption{Curious Galois groups}
	\label{Curious Galois groups}
\end{table}

\bibliography{bibliography}
\bibliographystyle{plain}

\end{document}